\newcommand{\additions}[1]{\iffalse #1 \fi}
\newcommand{\RR}{\ensuremath{\mathbb{R}}}
\newcommand{\NN}{\ensuremath{\mathbb{N}}}
\newcommand{\EE}{\ensuremath{\mathbb{E}}}
\newcommand{\N}{\ensuremath{\mathbf{N}}}
\newcommand{\eps}{\ensuremath{\varepsilon}}
\newcommand{\Tcon}{\ensuremath{T_{\text{con}}}}
\newcommand{\Rlex}{\ensuremath{\mathsf{R}_{\text{lex}}}}
\newcommand{\lex}{\ensuremath{{\text{lex}}}}
\newcommand{\slow}{\ensuremath{\mathtt{slow}}}
\newcommand{\fast}{\ensuremath{\mathtt{fast}}}
\newcommand{\ext}{\ensuremath{\mathtt{ex}}}
\newcommand{\comp}{\operatorname{comp}}
\newcommand{\KP}{\mathsf{KP}}
\newtheorem*{rep@theorem}{\rep@title}
\newcommand{\newreptheorem}[2]{%
\newenvironment{rep#1}[1]{%
 \def\rep@title{#2 \ref{##1}}%
 \begin{rep@theorem}}%
 {\end{rep@theorem}}}
\newtheorem{theorem}{Theorem}
\newtheorem{lemma}[theorem]{Lemma}
\newtheorem{remark}[theorem]{Remark}
\newtheorem{corollary}[theorem]{Corollary}
\begin{document}

\title{Connectivity Thresholds for Bounded Size Rules}


\author{Hafsteinn Einarsson\thanks{ETH Zurich, Institute of Theoretical Computer Science, Zürich, Switzerland} \and    
  Johannes Lengler\footnotemark[1] \and                                                                                
  Frank Mousset\footnotemark[1] \and                                                                                   
  Konstantinos Panagiotou\thanks{University of Munich, Mathematics Institute, München, Germany} \and                   
  Angelika Steger\footnotemark[1]                                                                                      
}

\maketitle

\begin{abstract}
In an \emph{Achlioptas process}, starting with a graph that has $n$ vertices
and no edge, in each round $d \ge 1$ edges are drawn uniformly at random, and
using some rule exactly one of them is chosen and added to the evolving graph. 
For the class of Achlioptas processes we investigate how much
impact the rule has on one of the most basic properties of a graph:
connectivity. Our main results are twofold. First, we study the prominent class
of \emph{bounded size rules}, which select the edge to add according to the
component sizes of its vertices, treating all sizes larger than some constant
equally. For such rules we provide a fine analysis that exposes the limiting
distribution of the number of rounds until the graph gets connected, and we
give a detailed picture of the dynamics of the formation of the single
component from smaller components.  Second, our results allow us to study the
connectivity transition of all Achlioptas processes, in the sense that we
identify a process that accelerates it as much as possible.
\end{abstract}


\section{Introduction and Results}

Over the last decades the so-called ``power of choice'' paradigm received a lot of attention in various fields. Very roughly, the term ``power of choice'' stands for the impact  that an observer  can have on a system even if she may influence it only by very small, local choices. Here we just provide a prototypical example. Suppose that we throw $n$ balls uniformly at random into $n$ bins. Then a classical result asserts that the largest number of balls in a bin, the so-called \emph{maximum load}, is close to $\log n / \log\log n$ with high probability (whp), i.e., with probability tending to one as $n\to \infty$, see for example~\cite{ar:g81}. If we instead distribute the balls one after the other, and we place each ball in the least loaded out of $d \ge 2$ randomly selected bins, then the maximum load becomes whp exponentially smaller, namely $\log\log n/\log d + \Theta(1)$, see~\cite{ar:abku99}.

The paradigm of the power of choice has many applications and was investigated in numerous different situations, cf.~\cite{p:mrs00,inc:m09} for some techniques and results. In this paper we study it in the context of the (online) formation of graphs, where the appearance of edges is driven by some random process. An $\ell$\emph{-Achlioptas process} is a game with a single player, Paul, who is building a graph. The game is played in rounds and in the first round Paul starts with a graph that has $n$ vertices and no edge. In each round, $\ell$ uniformly random vertices $v_1,\ldots,v_{\ell}$ are presented, and Paul can choose one of the edges $\{v_1,v_2\}$, $\{v_3,v_4\},\ldots,\{v_{\ell-1},v_{\ell}\}$ to put into the graph; we assume that $\ell$ is even throughout. This game defines a random sequence $(G_N)_{N\geq 0}$, where $G_N$ is the graph after $N$ rounds of the game.

The most prominent and well-studied instance of an Achlioptas process is when $\ell = 2$ and Paul has actually no choice. This is the classical Erd\H{o}s-R\'enyi random graph process, and we denote by $G_N^{\mathsf{ER}}$ the graph that is created after $N$ edges have been added (where we will always ignore multiple edges and loops). The asymptotic properties of $G_N^{\mathsf{ER}}$ have been studied in depth, and the results have numerous applications in mathematics and computer science. One of the most striking and intensely studied phenomenon is the \emph{percolation transition}, which is also described as the emergence of the giant component~\cite{erdHos1960evolution}. Indeed, if we parametrize $N = tn$, then for $t < 1/2$ the largest component in $G_N^{\mathsf{ER}}$ contains whp $O(\log n)$ vertices, while for $t > 1/2$ there is whp a component with $\Theta(n)$ vertices. From today's perspective, the fine details of the phase transition in $G_N^{\mathsf{ER}}$ are well-understood, see e.g.~\cite{MR1864966,ar:b84,ar:jklp93}.

The classical Erd\H{o}s-R\'enyi process contains in fact a deterministic flavor: 
it does not allow for an observer to influence it. Many applications, however, require (or allow) exactly that. Thus, since the seminal work of Erd\H{o}s and R\'enyi various modifications of  their process have been proposed. Among the most prominent ones are Achlioptas processes that utilize the paradigm of the power of choice. As it turned out, the analysis of such processes is far from trivial and requires many new ideas and techniques. In an Achlioptas process Paul can follow various strategies for selecting the next edge. To facilitate the analysis it makes sense to restrict his power to so-called \emph{size rules}: in each round, Paul bases his decision only on the component sizes of the randomly selected vertices $v_1, \dots, v_\ell$ at the beginning of the current round. For many such size rules, and particularly so-called \emph{bounded size} rules, it is by now established that they also exhibit a percolation transition that shares many qualitative characteristics with the transition in the Erd\H{o}s-R\'enyi process~\cite{Spencer2008,bhamidi2012bounded,bohman2006creating,riordan2011convergence,riordan2011explosive,panagiotou2011explosive,riordan2012achlioptas,bhamidi2012augmented,drmota2013pursuing}.

While the study of the percolation transition has attracted lots of attention, the typical properties of a random graph that is created by an Achlioptas process \emph{after} the transition are far less understood. In particular, there are some results concerning the presence of small subgraphs~\cite{krivelevich2009avoiding,krivelevich2012creating,mutze2011small} or Hamiltonicity~\cite{krivelevich2010hamiltonicity}. However, miraculously one of the most basic properties of a graph -- connectivity -- has been studied only very little, see~\cite{ar:kpXX}, and this is the topic of the present work.

Before we state our results we quickly review what is known for the Erd\H{o}s-R\'enyi process.
For $G_N^{\mathsf{ER}}$ the \emph{connectivity transition} is very well-understood. If we write $\Tcon^{\mathsf{ER}}$ for the smallest $N$ for which $G_N^{\mathsf{ER}}$ is connected, then it is known that whp $\Tcon^{\mathsf{ER}} = (1+o(1))n\log n/2$. Moreover, the fine behavior of $\Tcon^{\mathsf{ER}}$ has been studied; in particular, for any $c \in\mathbb{R}$
\[
  \lim_{n\to\infty}\Pr\left[\Tcon^{\mathsf{ER}} \leq \frac{n\log n+ cn}{2}\right]
	=  \exp\{-e^{-c}\}.
\]
Actually, much more can be said. Let $T_1^{\mathsf{ER}}$ denote the smallest $N$ for which $G_N^{\mathsf{ER}}$ contains no isolated vertex. Then whp $T_1^{\mathsf{ER}} = \Tcon^{\mathsf{ER}}$, i.e., the graph becomes whp connected exactly at the round in which the last isolated vertex disappears. For more details we refer to~\cite{MR1864966} and the references therein.

In this paper we study the fine details of the connectivity transition in a broad class of Achlioptas processes. More specifically, we investigate the prominent class of bounded-size rules, which, informally, are size rules such that all component sizes larger than some absolute bound are treated the same. For these processes we give a simple combinatorial criterion that distinguishes between ``degenerate'' and ``non-degenerate'' rules. We show that every degenerate rule needs in expectation $\Omega(n^2)$ rounds to reach the connectivity transition, while every non-degenerate rule needs $\Theta(n\log n)$ rounds. Moreover, if $\Tcon^{\mathsf{R}}$ is the number of rounds until the graph becomes connected for a rule $\mathsf{R}$, then we determine the expectation and the limiting distribution of $\Tcon^{\mathsf{R}}$, which is always a Gumbel distribution. However, we also discover a surprising phenomenon: while the Erd\H{o}s-R\'enyi process becomes whp connected exactly at the round in which the last isolated vertex disappears, this is not true for general bounded size rules. In particular, depending on the rule, several different component sizes may be involved in a ``race'' to get extinct last, and each one of them has a positive probability, which we determine, of achieving this. We give a natural example of such a rule in Section~\ref{ssec:KP}.

Although our results are concerned with bounded-size rules, they enable us to study the connectivity transition of \emph{all} $\ell$-Achlioptas processes in the following sense. A fundamental question is to identify the processes that \emph{accelerate} as much as possible the connectivity transition. We solve this problem by exhibiting in Section~\ref{ssec:LEX} a specific bounded-size rule that is provably the \emph{fastest} among all $\ell$-Achlioptas processes, and we compute the fine details of its connectivity transition.

In order to illustrate our results let us summarize what they imply for a popular rule, the so-called Bohman-Frieze process, $\mathsf{BF}$ for short. There, $\ell = 4$, and in round $N$ Paul chooses $\{v_1,v_2\}$ if and only if both $v_1$ and $v_2$ are isolated vertices in $G_N^{\mathsf{BF}}$ -- otherwise, he selects $\{v_3,v_4\}$ (as usual, $G_0^{\mathsf{BF}}$ contains $n$ vertices and no edge). The $\mathsf{BF}$ process was among the first ones to be studied in the context of graph formation exploiting the power of choice~\cite{bohman2001avoiding}, and it has received vast attention since then. see~\cite{bhamidi2013aggregation,janson2012phase,sen2013largest} and references therein. If we write $\Tcon^\mathsf{BF}$ for the smallest $N$ for which $G_N^{\mathsf{BF}}$ is connected, then our results, see also Section~\ref{sec:BohmanFrieze}, imply that 
\[
\EE[\Tcon^\mathsf{BF}]
= \frac{n\log n}{2} + \left(\frac\gamma2 - \frac{\log\varphi}{\sqrt{5}}\right)\cdot n + o(n),
\]
where $\gamma = 0.577\dots$ is the Euler-Mascheroni constant and $\varphi = (1 + \sqrt{5})/2$ the golden ratio, and
\[
  \lim_{n\to\infty}
		\Pr\left[\Tcon^\mathsf{BF} \leq \frac{n\log n + cn}{2}\right]
		= \exp\{-\varphi^{-2/\sqrt{5}} e^{-c}\} \quad \text{for all $c\in \mathbb{R}$}.
\]
Moreover, we show that whp $G_N^{\mathsf{BF}}$ gets connected when the last isolated vertex disappears. All these results are a special case of Theorem~\ref{thm:main}, which is the main
result of the paper.

\paragraph{Outline} In the next subsection we introduce formally the processes that we study and formulate the main result. The subsequent sections are devoted to the proofs. In Section~\ref{sec:early} we describe the typical structure of the resulting random graphs when the number $N$ of rounds is linear in the number of vertices, and we give tight bounds for the number of components of a given size. Then, in Section~\ref{sec:late} we prove the main result, Theorem~\ref{thm:main}. Section~\ref{sec:degenerate} is devoted to the study of degenerate rules, and the paper closes with some particular examples.

\subsection{\texorpdfstring{$(K,\ell)$}{(K,l)}--rules}\label{sec:rules}

In this paper we study a broad class of random graph processes that in particular include all bounded size rules treated in~\cite{Spencer2008}. We use the conventions $\NN = \{1,2,3,\ldots\}$, $\NN_0 = \{0,1,2,\ldots\}$, and $[m] = \{1,2,\ldots,m\}$ for $m\in \NN$. Let $K, \ell \in \NN$, with $\ell$ even. Let $S_K = \{1,2,\ldots,K,\omega\}$, where $\omega$ stands (informally) for ``larger than $K$''. A $(K,\ell)$-\emph{rule} is a mapping
\[
	\mathsf{R}: S_K^{\ell}\to [\ell/2].
\]
Any such mapping defines naturally a random graph process as follows. For a given graph $G$ we write $\tilde{c}(G;\, v)$ for the number of vertices in the component containing $v$ in $G$. Moreover, set 
\[
	c_K(G;\, v) =
	\begin{cases}
		\tilde{c}(G;\, v) , & \text{if } \tilde{c}(G;\, v) \le K, \\ 
		\omega, &\text{otherwise.}
	\end{cases}
\]
In the following we will often omit the subscript $K$ and the reference to $G$ whenever they are obvious from the context. With this notation, the $\mathsf{R}$-random graph process (or $\mathsf{R}$-process for short) with $n$ vertices is defined as follows. Unless otherwise stated, we begin with $G_0^{\mathsf{R}}$ being the graph with vertex set $[n]$ and no edge. $G_N^{\mathsf{R}}$ is then obtained by choosing independently and uniformly at random $\ell$ vertices $v_1, \dots, v_\ell$ and adding the edge $\{v_{2i-1}, v_{2i}\}$ to $G_{N-1}^{\mathsf{R}}$, where  $i = \mathsf{R}(c_K(v_1), \dots, c_K(v_\ell))$. In words, given the vector of the (truncated) sizes of the components that contain the $v_i$'s, $\mathsf{R}$ determines which of the $\ell/2$ edges determined by the $v_i$'s is to be included into $G_{N-1}^{\mathsf{R}}$.

Note that we do not require $\mathsf{R}$ to be symmetric, e.g.\! we allow for example $\mathsf{R}(1,2,2,\dots,2) \neq \mathsf{R}(2,1,2,\dots,2)$.  Since it is possible that all $\ell/2$ edges in a round are identical, it is clear that the graph will become connected almost surely after a finite number of rounds. The question, of course, is \emph{how quickly} this will take place. In Theorem~\ref{thm:main} and Theorem~\ref{thm:degenerate} we answer this question for a broad class of rules. In order to formulate both theorems, we first introduce some more notation that will be used througout this article.

For a given $(K,\ell)$-rule $\mathsf{R}$ we write $\Tcon^{\mathsf{R}}(n)$ for the smallest $N \in \mathbb{N}_0 \cup\{\infty\}$ such that $G_N^{\mathsf{R}}$ (where $G_0^\mathsf{R}$ has $n$ vertices) is connected. We will usually drop the dependence on $n$, unless it is necessary to make it explicit. For any $k\geq 1$, a \emph{$k$-component} of a graph is a component with $k$ vertices, a \emph{small component} is a component with at most $K$ vertices, and an \emph{$\omega$-component} is a component with more than $K$ vertices. Given $\mu, \nu \in S_K$ let $C_{\mu,\nu} = C_{\mu,\nu}(\mathsf{R})$ be the set of all component size vectors for which a $\mu$-component and a $\nu$-component are connected by an edge in a step of the $\mathsf{R}$-process. More formally,
\[
	C_{\mu,\nu}(\mathsf{R})
	:=
	\left\{s = (s_1, \dots, s_\ell)\in S_K^\ell \mid \{s_{2i-1}, s_{2i}\} = \{\mu,\nu\}, \text{where } i = \mathsf{R}(s)\right\}.
\]
Note that $C_{\mu, \nu} = C_{\nu,\mu}$. For $1\leq k\leq K$ we call
\[ 
	\ext_k(\mathsf{R})
	:=
	k \left|\left\{ s \in C_{k,\omega}(\mathsf{R}) \mid \exists i \in [\ell]: s_i = k \text{ and } s_j = \omega \text{ for all } j \in [\ell]\setminus \{i\}\right\}\right|
\]	
the \emph{extinction rate} for size $k$.  The role of this parameter will become clear at a later point of the analysis. Informally it has the following meaning. Let us consider the $\mathsf{R}$-process at a rather late point $N$ in time, where $G_{N}^{\mathsf{R}}$ is \emph{almost} connected. It is then plausible to assume that $G_{N}^{\mathsf{R}}$ typically consists of one huge component that contains almost all vertices, and all other vertices are in constant-sized components; this is for example the situation in the Erd\H{o}s-R\'enyi process, see e.g.~\cite{RandomGraphsBook,MR1864966}. Then, if we select uniformly at random $\ell$ vertices, then most likely they will all be part of the huge component. However, now and then we will also select a vertex in a small component, say with $k$ vertices, and then the most likely event is that we select \emph{exactly one} such vertex. So, the observed component size vector will look like $(\omega, \dots, k, \dots, \omega)$ with the ``$k$'' at a random position. Whether we actually connect the component of size $k$ with the large component depends on whether this component vector belongs to $C_{k,\omega}(\mathsf{R})$ or not. In other words, the speed with which components of size $k$ disappear depends on the number of such vectors in $C_{k,\omega}(\mathsf{R})$. This explains the second factor in the definition of $\ext_k(\mathsf{R})$. The first factor stems from the fact that a component of size $k$ has $k$ vertices that can be chosen in order to select this component. Indeed, as we will see in the subsequent proof, the smaller $\ext_k(\mathsf{R})$, the later components of size $k$ will disappear in the $\mathsf{R}$-process. We also let
\[
	\ext(\mathsf{R}) := \min_{1\leq k\leq K}{\ext_k(\mathsf{R})}
\]
be the \emph{total extinction rate} of $\mathsf{R}$, and we set
\[
	\slow(\mathsf{R})
		:= \{k\in [K] \mid \ext_k(\mathsf{R}) = \ext(\mathsf{R})\}
	\quad \text{and} \quad
	\fast(\mathsf{R}) := [K] \setminus \slow(\mathsf{R})
\]
the sets of \emph{slow indices} and \emph{fast indices}, respectively. As already mentioned, we will see later in the proof that the main ``obstacles'' that delay the point in time at which $G_N^{\mathsf{R}}$ becomes connected are the $k$-components, where $k \in \slow(\mathsf{R})$. For example, going back to the Erd\H{o}s-R\'enyi case, $\slow(\mathsf{ER}) = \{1\}$, and indeed, the graph becomes connected whp in the round where the last isolated vertex disappears. In contrast, if for some rule $\mathsf{R}$ we have $|\slow(\mathsf{R})| \ge 2$,  then shortly before the connectivity transition, the graph $G_N^\mathsf{R}$ may contain $k$-components for \emph{every} $k\in\slow(\mathsf{R})$, and each one of them has a positive probability of being the last one to disappear; see Section~\ref{ssec:KP} for a natural example.

In our study of the distribution of $\Tcon^{\mathsf{R}}$ it turns out that the value of the total extinction rate essentially determines the point in time where the $\mathsf{R}$-process gets connected, which is whp $(1+o(1))n\log n / \ext(\mathsf{R})$ if $\ext(\mathsf{R}) > 0$. This already shows that the case $\ext(\mathsf{R}) = 0$ is special, and we call a rule \emph{degenerate} if $\ext(\mathsf{R}) = 0$ and \emph{non-degenerate} otherwise.

The main results of this paper are summarized in the following theorem, which asserts that all non-degenerate rules belong to the same ``universality class'': with respect to the connectivity transition, the limiting distribution is always a Gumbel distribution, and the expected value of $\Tcon^\mathsf{R}$ equals $(n \log n + dn)/\ext(\mathsf{R}) + o(n)$ for some $d = d(\mathsf{R})$. To the best of our knownledge the latter statement has not previously been shown even for the $\mathsf{ER}$-process. Finally, only a finite set of component sizes provides the main 'obstacle' for the graph becoming connected.
\begin{theorem}\label{thm:main}
Let $K,\ell \in \mathbb{N}$ and let $\mathsf{R}$ be a non-degenerate $(K,\ell)$-rule such that~$\ext(\mathsf{R})<2K+2$. For $1 \le k \le K$ let $Y_k(N)$ denote the number of vertices in $k$-components in $G_{N}^{\mathsf{R}}$. Moreover, for each $k \in \slow(\mathsf{R})$ there exists\footnote{A formula for $d_k(\mathsf{R})$ is given in Lemma~\ref{lem:limits}. Concrete values of $d_k(\mathsf{R})$ for some rules can be found in the last section.} a constant $d_k = d_k(\mathsf{R})$ such that the following statements are true.
\begin{enumerate}
\item[(a)] For any $c \in \mathbb{R}$, whp for all $N \geq (n\log
n+cn)/\ext(\mathsf{R})$ we have for all $k\in \fast(\mathsf{R})$ that $Y_k(N) =
0$, and there is only one component with more than $K$ vertices in $G_N^{\mathsf{R}}$.
\item[(b)] For any $c \in \mathbb{R}$,
\[
\lim_{n\to\infty}\Pr\left[\Tcon^{\mathsf{R}} \leq \frac{n\log n + cn}{\ext(\mathsf{R})}\right] =  \prod_{k\in \slow(\mathsf{R})}e^{-d_ke^{-c}}.
\]
\item[(c)] Let $\gamma = 0.577...$ be the Euler-Mascheroni constant, and let $c_0 := \log\left(\sum_{k\in \slow(\mathsf{R})}d_k\right)$. Then
\[
\EE[\Tcon^{\mathsf{R}}] = \frac{n\log n+\gamma n +c_0n}{\ext(\mathsf{R})} + o(n).
\]
\item[(d)] For $k\in [K]$, let $T_k^\mathsf{R} := \min\{T \mid \forall N \geq T: Y_k(N) = 0\}$ be the time at which the last $k$-component vanishes. Then $\Pr[T_k^{\mathsf{R}}=\Tcon^{\mathsf{R}}]
  \stackrel{n\to\infty}{\longrightarrow} 0$ for $k \in \fast(\mathsf{R})$, and
  for $k \in \slow(\mathsf{R})$,
\[
  \Pr[T_k^\mathsf{R}=\Tcon^{\mathsf{R}}] \stackrel{n\to\infty}{\longrightarrow} \frac{d_k}{\sum_{i\in\slow(\mathsf{R})}d_i}.
\]
\end{enumerate}
\end{theorem}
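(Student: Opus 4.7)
The plan is to split the analysis into three phases and culminate with a Poisson/factorial-moment computation. Early phase: take Section~\ref{sec:early} as input, which shows that for a sufficiently large constant $\alpha$, the graph $G_{\alpha n}^{\mathsf{R}}$ whp has a unique $\omega$-component, and that for every $k\in[K]$ the quantity $Y_k(\alpha n)$ is concentrated around its linear-in-$n$ expectation. Middle phase: for $\alpha n \leq N \leq T_c - \omega(n)$, where $T_c := (n \log n + cn)/\ext(\mathsf{R})$, I would track the trajectory of each $Y_k(N)$ via a Wormald-type differential-equation argument. The key identity is that, as long as the $\omega$-component contains $(1-o(1))n$ vertices, the expected one-step change of $Y_k(N)$ conditional on the current graph equals $-(\ext_k(\mathsf{R})/n)\,Y_k(N) + o(1)$, with the error absorbing interactions among small components. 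This yields $Y_k(N) \approx Y_k(\alpha n)\exp\bigl(-\ext_k(\mathsf{R})(N-\alpha n)/n\bigr)$. Since $\ext_k(\mathsf{R}) > \ext(\mathsf{R})$ for $k\in\fast(\mathsf{R})$, these $Y_k$ vanish well before $T_c$, yielding part~(a); uniqueness of the $\omega$-component throughout the phase is preserved by a standard argument ruling out two coexisting linear-sized components.

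The heart of the proof is the Poisson limit in the late phase. I would compute the joint factorial moments of the vector $\bigl(Y_{k_1}(T_c)/k_1,\ldots,Y_{k_m}(T_c)/k_m\bigr)$ for distinct $k_i\in\slow(\mathsf{R})$, and show that they converge to $\prod_i \bigl(d_{k_i}e^{-c}\bigr)^{r_i}$. Each such moment equals the expected number of ordered tuples of $r_1+\cdots+r_m$ disjoint small components of prescribed sizes surviving until $T_c$. This expectation factorises into a combinatorial prefactor counting vertex sets and admissible ``birth histories'' for each candidate component, multiplied by a joint-survival probability. The latter factorises approximately as a product of individual survival probabilities of the form $\prod_{s \leq T_c}\bigl(1 - \ext_{k_i}(\mathsf{R})/(\ell n)\bigr)\cdot (1+o(1)) \approx n^{-1}e^{-c}$; matching the combinatorial prefactor with this product identifies $d_{k}$ as the constant of Lemma~\ref{lem:limits}, and yields joint Poisson convergence with independent marginals across $k\in\slow(\mathsf{R})$.

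Given the Poisson limit, parts~(b)--(d) follow. For (b), up to a null event $\{\Tcon^{\mathsf{R}}\leq T_c\}$ coincides with $\{\forall k\in[K]\colon Y_k(T_c)=0\}\cap\{\text{unique }\omega\text{-component}\}$, so independence of the Poisson limits gives $\Pr[\Tcon^{\mathsf{R}}\leq T_c] \to \prod_{k\in\slow(\mathsf{R})}e^{-d_ke^{-c}}$. For (c), integrate this CDF in $c$: after securing uniform integrability via a crude polynomial tail bound on $\Tcon^{\mathsf{R}}$, the mean of a Gumbel with parameter $\sum_k d_k$ yields $\gamma + c_0$. For (d), the disappearance time of the $k$-component population, rescaled, converges to an independent Gumbel shifted by $\log d_k$, so $\Pr[T_k^{\mathsf{R}}=\Tcon^{\mathsf{R}}]$ tends to the probability that this shifted Gumbel attains the maximum, which is $d_k/\sum_j d_j$. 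The main obstacle is the factorial-moment computation itself: the picture in which each small component survives independently is only approximate, and one must bound contributions from small--small mergers, from non-trivial birth histories (a $k$-component at time $T_c$ need not have been present throughout), and from higher-order correlations among the tracked components, all of which feed into the $o(1)$ error and whose control is precisely what forces the hypothesis $\ext(\mathsf{R}) < 2K+2$.
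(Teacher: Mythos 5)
Your three-phase skeleton, the Poisson/method-of-moments endgame, and the reduction of (c) and (d) to properties of the Gumbel distribution all match the paper's architecture. However, there are two substantive gaps and one misdiagnosis.

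First, the middle phase. You propose to extend the trajectory tracking up to $T_c - \omega(n)$ ``via a Wormald-type differential-equation argument.'' The paper explicitly notes that Wormald's method fails in this regime: the time horizon is $\Theta(n\log n)$ rather than $\Theta(n)$, and the $Y_k$'s themselves decay to $\Theta(n^\delta)$, so the additive error $o(\lambda n)$ from the DE method swamps the signal. The paper's Lemma~\ref{lemma:phase2} instead runs a multi-scale induction over blocks of length $n/\ext(\mathsf{R})$: within each block it freezes the set of $k$-components, introduces indicator variables $Z_{j,s}$ for ``component $s$ appears at a $k$-good position in round $j$,'' shows these are negatively associated (a balls-into-bins-type argument via \cite{Dubhashi96}), and applies Chernoff to the sum, letting the relative error $\eps_i$ grow in a controlled geometric fashion. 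Your one-line invocation of a ``Wormald-type'' bound does not supply a mechanism that would survive this superlinear, vanishing-density regime.

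Second, the late phase. You propose to compute the joint factorial moments of $Y_{k}(T_c)/k$ directly, which forces you to enumerate ``birth histories'' and bound cross-correlations. The paper avoids this entirely by a structural simplification: it fixes $N_\delta = (1-\delta)n\log n/\ext(\mathsf{R})$, conditions on the whp-event $\mathcal{E}_\delta$ that every round in $[N_\delta, N_\infty]$ is regular (at most one chosen vertex in a small component), so that \emph{no new small component is ever born} after $N_\delta$. It then defines $Z_k(N)$ as the number of $k$-components present at round $N_\delta$ that have never landed in a $k$-good position, and computes factorial moments of these $Z_k$'s by a clean product-of-Bernoullis calculation. Your route is not impossible but is substantially harder, and you give no plan for absorbing the birth-history contributions into $o(1)$.

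Third, the role of $\ext(\mathsf{R}) < 2K+2$. You attribute it to controlling higher-order correlations in the moment computation. In the paper it plays a different and sharper role, entirely within part~(a): one reinterprets $\mathsf{R}$ as a $(K',\ell)$-rule $\mathsf{R}'$ with $K' := 24(K+1)$, so that $\ext_k(\mathsf{R}') = 2k$ for $K < k \le K'$, and the hypothesis guarantees $2k > \ext(\mathsf{R}) = \ext(\mathsf{R}')$ for all such $k$, i.e.\ $\{K+1,\dots,K'\} \subseteq \fast(\mathsf{R}')$. This is what forces the intermediate-size components to vanish by time $N_c$ and hence is what allows one to conclude there is a \emph{unique} $\omega$-component. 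Without it, some component size just above $K$ could be slow, and the set $\slow(\mathsf{R})$ as defined at level $K$ would not capture the true bottleneck; this is exactly why the theorem can fail for $\ext(\mathsf{R}) \ge 2K+2$.
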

\noindent For a better understanding of the theorem we give two remarks.
\begin{remark}\label{rem:extension}
\begin{enumerate}
\item The theorem is in general \emph{not} true if $\ext(\mathsf{R})\geq 2K+2$. However, whenever $K'>K$ then every $(K,\ell)$-rule $\mathsf{R}$ is naturally also a $(K',\ell)$-rule $\mathsf{R}'$, with extinction speeds $\ext_k(\mathsf{R}') = \ext_k(\mathsf{R})$ for $1\leq k\leq K$ and $\ext_k(\mathsf{R}') = 2k$ for $K<k\leq K'$. More precisely, let
\[
\operatorname{trunc}: S_{K'} \to S_K\, , \quad \operatorname{trunc}(k) = \begin{cases} k, & \text{if $k \leq K$}\\ \omega, & \text{otherwise} \end{cases}.
\]
Then the $(K',\ell)$-rule $\mathsf{R}'$ is defined by $$\mathsf{R}'(s_1,\ldots, s_\ell) = \mathsf{R}(\operatorname{trunc}(s_1), \ldots, \operatorname{trunc}(s_\ell)).$$

In particular, if we have a $(K,\ell)$-rule $\mathsf{R}$ for which $\ext(\mathsf{R})\geq 2K+2$, then we can as well express it as a $(\ell/2\cdot K,\ell)$-rule $\mathsf{R}'$, and one checks immediately that $\ext(\mathsf{R}') \leq \ell\cdot K$. In this way, Theorem~\ref{thm:main} is applicable to every bounded size rule.
\item The proof of the theorem will also imply that for all $k\in \slow(\mathsf{R})$,
\begin{align*}
\lim_{n\to\infty}\Pr\left[Y_k\left(\left\lfloor\frac{n\log n +cn}{\ext(\mathsf{R})}\right\rfloor\right) =0\right] & =  e^{-d_ke^{-c}}. \end{align*}
Given these equations, the statement in the theorem shows an ``independence in the limit'' of the variables $Y_k$ in the following sense. Since by Theorem~\ref{thm:main} (a) whp there is only one component with more than $K$ vertices for $N = (n\log n +cn)/\ext(\mathsf{R})$, the graph $G_N^{\mathsf{R}}$ is connected if and only if $Y_k(N)=0$ for all $1\leq k\leq K$. Hence, Theorem~\ref{thm:main} (b) can also be stated as ``$\lim_{n\to \infty}\Pr[Y_k(N) = 0 \text{ for all } 1\le k\le K] = \lim_{n\to \infty}\prod_{1\leq k\leq K} \Pr[Y_k(N)=0]$''.
\end{enumerate}
\end{remark}

\noindent Theorem~\ref{thm:main} only speaks about non-degenerate rules. Degenerate rules $\mathsf{R}$ have the unpleasant property that $\Tcon^{\mathsf{R}}$ can be very large, as it is possible that components of a given fixed size $\le K$ are never connected to other components unless the rule has no choice. In particular, assume that $\ext_k(\mathsf{R}) = 0$ for some $1\le k\le K$, and that there are only two components left in $G_N^{\mathsf{R}}$: a component with $n-k$ vertices and a $k$-component. Then the $k$-component will not be connected to the big component unless at least two of the randomly selected vertices in the current round belong to the $k$-component. Since the probability that this happens is in $O(n^{-2})$ we will need to wait an expected quadratic number of rounds until the graph gets connected. In fact, a similar situation always occurs with non-negligible probability, which is the reason for the following theorem. The proof can be found in Section~\ref{sec:degenerate}.
\begin{theorem}\label{thm:degenerate}
Let $K,\ell \in \mathbb{N}$ and let $\mathsf{R}$ be a degenerate $(K,\ell)$-rule. Then $\EE[\Tcon^\mathsf{R}] = \Omega(n^2)$. 
\end{theorem}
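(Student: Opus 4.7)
The proof plan is to choose $k \in [K]$ with $\ext_k(\mathsf{R}) = 0$ (such $k$ exists by degeneracy of $\mathsf{R}$), and to bound $\EE[\Tcon^\mathsf{R}]$ from below by $\EE[T_k^\mathsf{R}]$, the expected extinction time of $k$-components, which satisfies $T_k^\mathsf{R} \leq \Tcon^\mathsf{R}$. The crucial combinatorial consequence of $\ext_k(\mathsf{R}) = 0$ is that, for every configuration $s \in S_K^\ell$ with exactly one coordinate equal to $k$ and all other coordinates equal to $\omega$, the rule $\mathsf{R}$ selects an edge whose both endpoints have component size $\omega$. Dynamically, this means: whenever only one of the $\ell$ drawn vertices $v_1, \ldots, v_\ell$ falls inside a fixed $k$-component and the remaining $\ell-1$ fall into components of size greater than $K$, that $k$-component is not modified in the current round.

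The first and harder step is to show that, with probability bounded away from $0$, the $\mathsf{R}$-process reaches, at some random time $T^\star$, a state in which there is a unique $\omega$-component together with a unique small component of size $k$. I would do this by introducing a comparison rule $\tilde{\mathsf{R}}$ that agrees with $\mathsf{R}$ outside of the problematic configurations described above, but picks a $k$--$\omega$ edge there; by construction $\ext_{k'}(\tilde{\mathsf{R}}) = \ext_{k'}(\mathsf{R})$ for $k' \neq k$, while $\ext_k(\tilde{\mathsf{R}}) > 0$, and repeating the modification for any other size with vanishing extinction rate (at most $K$ times) yields a non-degenerate rule. Theorem~\ref{thm:main} applied to this non-degenerate rule gives a clean late-time picture, and a step-by-step coupling between $\mathsf{R}$ and $\tilde{\mathsf{R}}$, which differs only on configurations that would not affect the giant component, should transfer this picture back to $\mathsf{R}$ while preserving the extra $k$-components that survive under $\mathsf{R}$ but disappear under $\tilde{\mathsf{R}}$.

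Given a clean state as above at time $T^\star$, the second step is a direct one-round estimate. From $T^\star$ onwards, the only way the $k$-component can be destroyed is if at least two of the $\ell$ drawn vertices lie inside it, since any round in which exactly one $v_i$ lies in the $k$-component and the remaining $\ell-1$ lie in the unique $\omega$-component produces a configuration that the rule cannot use to touch the $k$-component. This event has probability $\binom{\ell}{2}(k/n)^{2}(1 - k/n)^{\ell-2} = O(1/n^{2})$, so the expected number of further rounds until the $k$-component is destroyed is $\Omega(n^{2})$. Combining this with the constant-probability lower bound from the first step yields $\EE[T_k^\mathsf{R}] = \Omega(n^{2})$ and hence $\EE[\Tcon^\mathsf{R}] = \Omega(n^{2})$.

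The main obstacle is clearly the first step, since all structural results in the paper so far are for non-degenerate rules and the straightforward differential-equation analysis from Section~\ref{sec:early} does not directly cover degenerate rules. The coupling with the modified non-degenerate rule $\tilde{\mathsf{R}}$ is the natural workaround, but one has to verify carefully that it preserves the relevant structural information and, in particular, that the $k$-components which $\tilde{\mathsf{R}}$ would destroy indeed persist in the $\mathsf{R}$-process.
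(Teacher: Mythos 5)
Your second step is correct and matches the paper's endgame: once the process is in a state with a single $\omega$-component and at most two small components (each of a slow size $k$ with $\ext_k(\mathsf{R})=0$), the rule can only destroy a small component when at least two of the $\ell$ drawn vertices fall into small components, which happens with probability $O(1/n^2)$ per round and gives the $\Omega(n^2)$ bound.

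The problem is your first step. The proposed coupling between $\mathsf{R}$ and the modified non-degenerate rule $\tilde{\mathsf{R}}$ does not go through as described. The two rules differ precisely on the configurations $(\omega,\ldots,\omega,k,\omega,\ldots,\omega)$, and the very first time such a configuration arises the two processes make different edge choices, after which their component structures diverge. From that point on, even if the two processes are fed the same random vertex sequence, they will in general observe \emph{different} component-size vectors for the same tuple $(v_1,\dots,v_\ell)$ (a vertex that is still in a $k$-component under $\mathsf{R}$ may already be in the giant under $\tilde{\mathsf{R}}$), so they can disagree on configurations where the two rules are identical. The statement that the coupling ``differs only on configurations that would not affect the giant component'' is not something you can arrange; you would need a genuinely new argument to track the divergence, and nothing in Theorem~\ref{thm:main} helps you there since it is proved only for the $\tilde{\mathsf{R}}$-marginal. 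Moreover, even granting a working coupling, you do not control \emph{how many} $k$-components survive at the time $\tilde{\mathsf{R}}$ becomes connected. If $\Theta(\log n)$ of them survive, then the probability of drawing two vertices inside small components in one round is $\Theta(\log^2 n / n^2)$, which only yields an $\Omega(n^2/\log^2 n)$ waiting time; you would still need a further argument to reduce to a constant number of slow components. The paper sidesteps both issues with a direct inductive estimate (Lemma~\ref{lem:degenerate1}) valid for degenerate rules showing whp $Y_{\fast}(tn) < \eps Y_{\slow}(tn) + C\log n$ for all $t_0 < t < n$. This gives, without any coupling, a round at which the total small-component mass is $O(\log n)$ while the slow mass is still $\Omega(\log n)$; one then waits (deterministically a further $n^{3/2}$ rounds) until only one or two slow components remain and all fast components have been absorbed, after which the $\Omega(n^2)$ computation applies. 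In short, your endgame is right, but the route to the endgame needs to be replaced by a structural argument intrinsic to the degenerate rule rather than by comparison with a non-degenerate one.
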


\noindent Note that for certain rules $\Tcon^\mathsf{R}$ can be even larger than $n^2$. Consider for example a $(1,\ell)$-rule that does not take any $1$-component unless forced to, i.e., the rule chooses $(\omega,\omega)$-edges whenever such an edge is available. In the proof of Theorem~\ref{thm:degenerate} we will show that whp there is a situation where only one or two isolated vertices remain. These last vertices will only be collected if in every edge there is at least one isolated vertex. This will eventually happen since we allow a vertex to appear several times in the same round. However, the probability of this event is $O(n^{-\ell/2})$, and thus $\EE[\Tcon^\mathsf{R}] = \Omega(n^{\ell/2})$.

\subsection{Further Terminology and Prerequisites}
\label{ssec:terminology}

For a graph $G$ and an induced subgraph $C$ of $G$ we write $C \in \comp_k(G)$ if $C$ is a $k$-component of~$G$. We say that an event $\mathcal{E} = \mathcal{E}(n)$ holds with high probability (whp)  if $\Pr[\mathcal{E}(n)] \to 1$ for $n\to \infty$. For technical reasons, we will need most statements to hold with probability $1-o(1/\log n)$, and we will say that $\mathcal{E}$ holds \emph{with log-high probability} (wlhp) if $\Pr[\mathcal{E}(n)] \geq 1- o(1/\log n)$ for $n\to \infty$. Without further reference we will use for $x\in[0,1]$ the well-known bounds
\[
	(1-x)^n = 1-nx+O(n^2x^2)
	\quad \text{and} \quad
	1-x = e^{-x + \Theta(x^2)}.
\]
In several proofs we will also exploit the following version of the \emph{Chernoff bounds}, see e.g.~\cite{RandomGraphsBook}.
\begin{lemma}
\label{lemma:Chernoff}
Let $X_1,\ldots,X_n$ be independent Bernoulli variables such that $\Pr[X_i = 1] = p$ and $\Pr[X_i = 0] = 1-p$ for all $1\leq i \leq n$, and let $X = \sum_{i=1}^n X_i$. Then for every $0 \leq \delta \leq 1$,
\[
\Pr[X \geq (1+\delta)np] \leq e^{-{\delta^2}np/3} \qquad \text{and}\qquad \Pr[X \leq (1-\delta)np] \leq e^{-{\delta^2}np/3}.
\]
and
\[
\Pr[X \geq t] \leq 2^{-t} \qquad \text{for all $t\ge 2enp$}.
\]
\end{lemma}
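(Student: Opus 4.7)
The plan is to follow the standard moment generating function argument. For any $t>0$, Markov's inequality gives
\[
\Pr[X \geq a] \;=\; \Pr[e^{tX} \geq e^{ta}] \;\leq\; e^{-ta}\,\mathbb{E}[e^{tX}].
\]
Independence of the $X_i$ yields $\mathbb{E}[e^{tX}] = \prod_{i=1}^n \mathbb{E}[e^{tX_i}] = (1-p+pe^t)^n$, and the elementary inequality $1+x\leq e^x$ upgrades this to $\mathbb{E}[e^{tX}] \leq \exp(np(e^t-1))$. This produces the master bound $\Pr[X\geq a] \leq \exp(np(e^t-1)-ta)$, which I would use for all three inequalities.

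For the upper tail I would set $a=(1+\delta)np$ and optimize, picking $t=\log(1+\delta)$, to obtain
\[
\Pr[X \geq (1+\delta)np] \;\leq\; \left(\frac{e^{\delta}}{(1+\delta)^{1+\delta}}\right)^{\!np}.
\]
The remaining step is a one-variable calculus check: for $0\leq\delta\leq 1$ the function $f(\delta):=(1+\delta)\log(1+\delta)-\delta-\delta^2/3$ satisfies $f(0)=0$ and $f'(\delta)=\log(1+\delta)-2\delta/3\geq 0$ on $[0,1]$, hence $(1+\delta)^{1+\delta}\geq e^{\delta+\delta^2/3}$ and the claimed bound $e^{-\delta^2 np/3}$ follows. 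The lower tail proceeds symmetrically by applying Markov to $e^{-tX}$ with $t>0$ and $a=(1-\delta)np$, yielding $\Pr[X\leq (1-\delta)np] \leq (e^{-\delta}/(1-\delta)^{1-\delta})^{np}$; the analogous elementary inequality $(1-\delta)\log(1-\delta)+\delta\geq \delta^2/3$ on $[0,1]$ closes the argument.

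For the final bound, I would return to the master inequality and write $t=(1+\delta)np$ with $\delta\geq 2e-1$. Using the looser estimate $e^{\delta}\leq e^{1+\delta}$, one gets
\[
\left(\frac{e^{\delta}}{(1+\delta)^{1+\delta}}\right)^{\!np} \;\leq\; \left(\frac{e}{1+\delta}\right)^{(1+\delta)np} \;=\; \left(\frac{e}{1+\delta}\right)^{\!t}.
\]
The hypothesis $t\geq 2enp$ is exactly $1+\delta\geq 2e$, so $e/(1+\delta)\leq 1/2$, giving $\Pr[X\geq t]\leq 2^{-t}$. The only non-mechanical ingredients are the two elementary inequalities controlling $(1\pm\delta)\log(1\pm\delta)$; everything else is routine MGF manipulation, and there is no real obstacle to overcome.
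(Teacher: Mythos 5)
Your proof is correct: the moment generating function argument with the optimized bound $\bigl(e^{\delta}/(1+\delta)^{1+\delta}\bigr)^{np}$, the two elementary inequalities $(1+\delta)\log(1+\delta)-\delta\ge\delta^2/3$ and $(1-\delta)\log(1-\delta)+\delta\ge\delta^2/3$ on $[0,1]$, and the reduction of the $t\ge 2enp$ case to $e/(1+\delta)\le 1/2$ are all valid (noting that the optimized upper-tail bound holds for all $\delta>0$, which your derivation indeed does not restrict). The paper does not prove this lemma at all but cites it as a standard Chernoff bound from the literature, and your argument is exactly the standard derivation found there, so there is nothing further to compare.
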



\section{Early Stages of the \texorpdfstring{$\mathsf{R}$}{R}--process}
\label{sec:early}

Let $\mathsf{R}$ be a $(K,\ell)$-rule. In this section we will prove several key lemmas that describe the typical structure of $G_N^\mathsf{R}$ when $N$ is proportional to the number $n$ of vertices. For $k\in S_K$, let the random variable $Y_k^{\mathsf{R}}(N) = Y_k(N)$ denote the number of vertices in $k$-components in $G_N^{\mathsf{R}} = G_N$. Note that $\sum_{k\in S_K} Y_k(N) = n$ for all $N\geq 0$. We will show in Lemma~\ref{lemma:linearregime} that for an appropriate range of $N$, $Y_k(N) = (1+o(1)) \cdot z_k(N/n)\cdot n$, where the $z_k$'s are the unique solution of a specific system of differential equations~\eqref{eq:diffeq3}. To this end, we will use a version of Wormald's method~\cite{Wormald1999}. The argument for establishing the typical trajectory of the $Y_k$'s on the basis of differential equations is rather standard. However, the main contribution of this section is to study in detail the \emph{analytic} properties of the solution of the system~\eqref{eq:diffeq3}, and in particular the case where $N/n$ gets large, see Lemma~\ref{lem:propdiffeq} and~\ref{lem:limits}. These results will be important ingredients in forthcoming arguments.

Let us begin with specifying the system of differential equations. For $s = (s_1,\ldots,s_{\ell}) \in
S_K^{\ell}$  and $\mu,\nu \in S_K$ we define the following polynomials in the tuple $(z_k)_{k\in S_K}$:
\[
  P_s\left((z_k)_{k \in S_K}\right) := \prod_{k=1}^\ell z_{s_k}
	\quad\text{and}\quad
	P_{\mu,\nu}\left((z_k)_{k \in S_K}\right) := \sum_{s\in C_{\mu,\nu}(\mathsf{R})} P_s\left((z_k)_{k \in S_K}\right).
\]
The system is given by
\begin{equation}
\label{eq:diffeq3}
\frac{dz_k}{dt}  = f_k\big(z_1(t), \dots, z_K(t), z_\omega(t)\big) 
\end{equation}
with initial conditions $z_1(0) = 1$ and $z_k(0) = 0$ for $k \in S_K\setminus\{1\}$, and where (omitting for brevity the argument $(z_1(t),\ldots,z_\omega(t))$)  for $k\in[K]$,
\begin{equation}
\label{eq:diffeq1}
f_k = f_k^+ - f_k^-
~~
\textrm{with}
~~
f_k^+ = k\sum_{\genfrac{}{}{0pt}{}{1\leq \mu \leq \nu}{\mu+\nu=k}}P_{\mu,\nu},
~~
f_k^- = 2kP_{k,k} +
k\sum_{\genfrac{}{}{0pt}{}{\mathclap{\mu\in S_K\setminus\{k\}}}{\phantom{\mu+\nu = k}}}
P_{\mu,k},
\end{equation}
and for $k=\omega$,
\begin{equation}
\label{eq:diffeq2}
f_{\omega} = \sum_{\genfrac{}{}{0pt}{}{1\leq \mu\leq \nu\leq K}{\mu+\nu>K}} (\mu+\nu)P_{\mu,\nu} + \sum_{\mu=1}^{K} \mu P_{\mu,\omega}.
\end{equation}
The idea behind these definitions is that if $Y_k(N)= nz_k(N/n)$ for all $k \in S_K$, then $P_s$ equals the probability that $s$ is the component size vector of $K$ randomly selected vertices (i.e., the $i$th selected vertex is in an $s_i$-component, for all $1\le i \le\ell$). Thus, $f_k^+$ is (close to) the expected number of vertices in $k$-components created in round $N+1$, and $f_k^-$ is (close to) the expected number of vertices in $k$-components destroyed in round $N+1$; this will be made precise in the proof of Lemma~\ref{lemma:linearregime}. Note that these functions depend on the underlying $(K,\ell)$-rule
$\mathsf{R}$. The following lemma justifies the specific choice of the differential equation system.
\begin{lemma} \label{lemma:linearregime}
  Let $k, \ell \in \mathbb{N}$, let $\mathsf{R}$ be a $(K,\ell)$-rule, and let $T>0$. Let $\lambda \in \omega(n^{-1}) \cap o(1)$. Then there exists a unique solution $(z_k(t))_{k\in S_K}$ of the system~\eqref{eq:diffeq3}, and with probability at least $1-O(\frac{1}{\lambda}\exp(-{n\lambda^3}/{8K^3}))$,
\[
Y_k(N) = nz_k(N/n) + O(\lambda n)
\]
uniformly for all $k\in S_K$ and all $0\leq N \leq Tn$.
\end{lemma}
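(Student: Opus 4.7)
My plan is to recognize this as a textbook application of Wormald's differential equation method: the analytic side establishes that~\eqref{eq:diffeq3} has a unique well-behaved solution, while the probabilistic side verifies the trend and bounded-difference hypotheses needed for $Y_k(N)/n$ to concentrate around $z_k(N/n)$.

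On the analytic side, each $f_k$ is a polynomial in $(z_j)_{j\in S_K}$ and hence locally Lipschitz. I would verify two sanity properties: first, $\sum_{k\in S_K} f_k\equiv 0$, so $\sum z_k(t)=1$ is preserved by the flow; second, $f_k^-$ contains $z_k$ as a factor (via the $P_{\mu,k}$ and $P_{k,k}$ terms), so $z_k$ cannot cross zero from above. Both facts follow by a term-by-term check: for each $s\in S_K^\ell$ with $\mathsf{R}(s)=i$ and $\{s_{2i-1},s_{2i}\}=\{\mu,\nu\}$, the monomial $P_s$ contributes $+(\mu+\nu)$ either to $f_{\mu+\nu}^+$ or to $f_\omega$ (depending on whether $\mu+\nu\leq K$ and whether $\omega\in\{\mu,\nu\}$), together with $-\mu,-\nu$ in $f_\mu^-,f_\nu^-$ (there is no counterpart in $f_\omega^-$ when $\omega\in\{\mu,\nu\}$, since absorbing vertices into an $\omega$-component does not destroy it). Combined with Picard-Lindel\"of, confinement to the simplex $\{z\in[0,1]^{|S_K|}:\sum z_k=1\}$ yields a unique global solution on $[0,\infty)$.

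On the probabilistic side, condition on $G_N$ and use that the $\ell$ sampled vertices are independent and uniform, so the truncated size vector $s(N)=(c_K(v_1),\dots,c_K(v_\ell))$ equals a given $s$ with exact probability $\prod_{i=1}^\ell Y_{s_i}(N)/n=P_s\bigl((Y_j(N)/n)_{j}\bigr)$. Summing the resulting signed changes of $Y_k$ over all $s$ (using $i=\mathsf{R}(s)$) reconstructs $f_k\bigl(Y_1(N)/n,\dots,Y_\omega(N)/n\bigr)$, up to two exceptional events of probability $O(1/n)$ each: the collision $v_{2i-1}=v_{2i}$, and the event that $v_{2i-1},v_{2i}$ lie in the same small component when their truncated sizes agree; both are handled by direct counting. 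Moreover $|Y_k(N+1)-Y_k(N)|\leq 2K$ for every $k\in S_K$, since a single round creates or destroys at most two small components.

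With the trend $\mathbb{E}[Y_k(N+1)-Y_k(N)\mid G_N]=f_k(Y(N)/n)+O(1/n)$ and bounded increments in hand, I close the argument in the standard Wormald fashion. Setting $D_k(N):=Y_k(N)-nz_k(N/n)$, a stopping-time argument exploiting the Lipschitz continuity of $f_k$ shows that, as long as $\max_j|D_j|\leq\lambda n$, the drift of $D_k$ is $O(\lambda)+O(1/n)$ per step. Applying Azuma-Hoeffding on each subinterval of length $\Theta(\lambda n)$ in $[0,Tn]$ then yields deviation $O(\lambda n)$ with failure probability $\exp(-\Omega(n\lambda^3/K^3))$ per subinterval, and a union bound over the $O(1/\lambda)$ subintervals and the $|S_K|=K+1$ coordinates delivers the claimed $O\bigl(\tfrac{1}{\lambda}\exp(-n\lambda^3/(8K^3))\bigr)$ bound. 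The principal technical obstacle I foresee is the combinatorial bookkeeping in the trend calculation: ensuring that the boundary cases ($\mu=\nu$, $\mu+\nu=K$ versus $\mu+\nu>K$, and $\omega\in\{\mu,\nu\}$, as well as the same-vertex/same-component exceptions) are each counted with the correct multiplicity so that the sum reassembles precisely into~\eqref{eq:diffeq1}-\eqref{eq:diffeq2}.
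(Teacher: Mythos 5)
Your proposal matches the paper's approach: both verify the hypotheses of Wormald's differential equation method (bounded increments with $\beta = 2K$, the trend hypothesis via the monomials $P_s$ with $O(1/n)$ corrections for same-vertex/same-component collisions, and Lipschitz continuity of the polynomial right-hand sides) and then rely on the resulting concentration. The only difference is packaging: the paper cites Wormald's Theorem 5.1 (its Theorem~\ref{thm:diffeq}) as a black box for both the existence/uniqueness of the solution and the $O\bigl(\tfrac{1}{\lambda}\exp(-n\lambda^3/\beta^3)\bigr)$ error bound, whereas you additionally sketch the Picard--Lindel\"of/confinement argument and the underlying Azuma--Hoeffding supermartingale estimate, which are already subsumed in the cited theorem.
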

In the proof of Lemma~\ref{lemma:linearregime} we use the following general statement that is a special case of~\cite[Theorem 5.1]{Wormald1999}. Assume that for every $n\geq 1$ we have a Markov chain $(G_0^{(n)},G_1^{(n)},\ldots)$, where the random variable $G_N^{(n)}$ takes values in the set $\mathcal{G}^{(n)}$ of all graphs on $n$ vertices. When referring to the Markov chain we usually drop the dependence on $n$ from the notation. In our context, $G_N = G_N^{\mathsf{R}}$. Let $\mathcal{G}^{(n)+}$ be the set of valid sequences with respect to the Markov chain, i.e. the set of all sequences $(G_0,G_1,\ldots)$ such that $G_N \in \mathcal{G}^{(n)}$, and the transition probability from $G_N$ to $G_{N+1}$ is positive for all $N\geq 0$. For functions $Y_1=Y_1^{(n)},\ldots,Y_a = Y_{a}^{(n)}\colon \mathcal{G}^{(n)} \to \mathbb{R}$, and $D\subseteq \RR^{a+1}$ we define the \emph{stopping time} $N_D(Y_1,\ldots,Y_a)$ to be the minimum $N$ such that $$(N/n,Y_1(G_N)/n,\ldots,Y_a(G_N)/n) \not\in D.$$
In our context, $a = K+1$ and $Y_{K+1} = Y_\omega$. With this notation, the following theorem holds.
\begin{theorem}[Theorem 5.1. in \cite{Wormald1999}, simplified\footnote{The theorem in~\cite{Wormald1999} is not restricted to Markov chains, and it is also not restricted to graphs. Moreover, the boundedness hypothesis may be satisfied only for a function $\beta = \beta(n)$ and may fail with some error probability $\gamma = \gamma(n)$.}]
\label{thm:diffeq}
Let $a,n \in \mathbb{N}$. For $1\leq k \leq a$ let $Y_k\colon \mathcal{G}^{(n)} \to \mathbb{R}$ and $f_k\colon\RR^{a+1} \to \RR$ be functions such that $|Y_k(G)| \le n$ for all $G \in \mathcal{G}^{(n)}$. 
Let $D$ be some bounded connected open set containing the closure of
\[
\{(0,z_1,\ldots,z_a) \mid \Pr[Y_k(G_0) = z_kn\ \text{  for all } 1 \leq k \leq a] \neq 0 \text{ for some }n \}.
\]
Assume the following three conditions hold.
\begin{enumerate}
\item[(i)] (Boundedness hypothesis) There is a constant $\beta \geq 1$ such that for all $1\leq k\leq a$, all $(G_0,G_1,\ldots) \in \mathcal{G}^{(n)+}$, and all $N\geq 0$ we have
\[
|Y_k(G_{N+1})-Y_k(G_N)| \leq \beta.
\]
\item[(ii)] (Trend hypothesis) For some function $\lambda = \lambda(n) = o(1)$ and for all $1 \le k\leq a$ and all $G \in \mathcal{G}$,
  \[
    \left|\EE[Y_k(G_{N+1})-Y_k(G_N) \mid G_N=G ] - f_k\Big(\frac{N}{n},\frac{Y_1(G)}n,\ldots,\frac{Y_a(G)}n\Big)\right| \leq \lambda
  \] 
  for all $N < N_D$.
\item[(iii)] (Lipschitz hypothesis) Each function $f_k$ is continuous, and satisfies a Lipschitz condition  on
$D \cap \{(t,z_1,\ldots,z_a) \mid t\geq 0\}$. 
\end{enumerate}
Then the following is true.
\begin{enumerate}
\item[(a)] For $(0,\hat z_1,\ldots,\hat z_a)\in D$, the system of differential equations 
\[
\frac{dz_k}{dt} = f_k(t,z_1,\ldots,z_a), \quad k=1,\ldots,a
\]
has a unique solution in $D$ for $z_k:\RR\to\RR$ passing through $z_k(0) = \hat z_k, 1\leq k\leq a$ and the solution extends to points arbitrarily close to the boundary of $D$;
\item[(b)] For some $C>0$, with probability $1-O(\frac{1}{\lambda}\exp(-{n\lambda^3}/{\beta^3}))$,
\begin{equation}\label{eq:concentration}
Y_k(G_N) = nz_k(N/n) + o(\lambda n)
\end{equation}
uniformly for $0\leq N \leq \sigma n$ and for each $k$, where $z_k(t)$ is the solution in \emph{(a)} with $\hat z_k = \frac{1}{n}Y_k(0)$, and $\sigma = \sigma(n)$ is the supremum of those $x$ to which the solution can be extended before reaching within $\ell^{\infty}$-distance $C\lambda$ of the boundary of $D$.
\end{enumerate}
\end{theorem}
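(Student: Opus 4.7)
Part (a) is a direct application of the Picard--Lindelöf theorem: the vector field $(f_k)$ is Lipschitz on $D \cap \{t \ge 0\}$ by hypothesis (iii), so a unique local solution exists from any initial point in $D$, and because $D$ is bounded the solution cannot blow up in finite time and must therefore extend until it comes arbitrarily close to $\partial D$. Part (b) is proven by the standard differential-equation-method coupling, tracking the error
\[
E_k(N) := Y_k(G_N) - n z_k(N/n)
\]
through a discretization argument.

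The plan is as follows. Fix a step size $s := \lambda/\beta$ and partition $[0, \sigma n]$ into $m = O(1/\lambda)$ blocks at the time points $N_i := \lfloor i s n \rfloor$. On the $i$-th block use the Doob decomposition
\[
Y_k(G_{N_{i+1}}) - Y_k(G_{N_i}) = M_k^{(i)} + \sum_{j=N_i}^{N_{i+1}-1} \EE[Y_k(G_{j+1}) - Y_k(G_j) \mid G_j],
\]
where $M_k^{(i)}$ is a martingale whose increments are bounded by $2\beta$ via hypothesis (i). Since the block has length $sn = \lambda n/\beta$, Azuma--Hoeffding gives
\[
\Pr\!\left[\,|M_k^{(i)}| \ge \lambda^2 n/\beta\,\right] \le 2\exp(-\Omega(n\lambda^3/\beta^3)).
\]
On the complementary event, replace each conditional expectation by $f_k(j/n,Y_1(G_j)/n,\ldots,Y_a(G_j)/n) + O(\lambda)$ using hypothesis (ii), and then use the Lipschitz continuity of $f_k$ together with a first-order Taylor estimate for $z_k$ to compare this Riemann sum with $n(z_k(t_{i+1}) - z_k(t_i))$. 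Collecting all contributions yields the recursion
\[
\max_k |E_k(N_{i+1})| \le (1 + Ls)\max_k |E_k(N_i)| + O(\lambda^2 n/\beta),
\]
where $L$ is the Lipschitz constant of the $f_k$. Iterating this discrete Gronwall inequality over the $m$ blocks gives $\max_k |E_k(N_i)| = O(e^{L\sigma}\lambda n) = O(\lambda n)$, which extends uniformly in $N$ between block endpoints since $|Y_k(G_N) - Y_k(G_{N_i})| \le \beta sn = \lambda n$ by (i) and $n\,|z_k(N/n) - z_k(N_i/n)| = O(sn) = O(\lambda n/\beta)$ by the ODE. A union bound over the $O(1/\lambda)$ blocks and the $a$ coordinates gives the failure probability $O(\lambda^{-1}\exp(-n\lambda^3/\beta^3))$ asserted in the theorem. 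To respect the boundary of $D$, I would define a stopping time at the first $N$ whose scaled coordinates leave a $C\lambda$-neighbourhood of $\partial D$; this stopping time agrees with $N_D$ up to the buffer defining $\sigma$, and hypotheses (ii)--(iii) are available throughout the coupled argument.

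The main technical obstacle is closing the Gronwall iteration while keeping all three per-block error sources --- the trend error $\lambda\cdot sn$, the Lipschitz amplification $Lsn \cdot \max_k|E_k|$, and the Azuma deviation $\lambda^2 n/\beta$ --- simultaneously of order $\lambda^2 n/\beta$, which is exactly what the calibration $s=\lambda/\beta$ achieves. A secondary subtlety is that hypotheses (ii) and (iii) are available only strictly inside $D$, so one must argue inductively that the scaled process stays in a $C\lambda$-shell away from $\partial D$ at every block endpoint before invoking the Lipschitz condition on the next block; this is the reason the boundary buffer $C\lambda$ appears in the definition of $\sigma$ in part (b) of the statement.
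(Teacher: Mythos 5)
This statement is quoted by the paper as Theorem~5.1 of Wormald's survey and is not proved there; your sketch (block decomposition, Azuma--Hoeffding on the Doob martingale within blocks of length $\Theta(\lambda n/\beta)$, trend plus Lipschitz/Gronwall accumulation, and a union bound over the $O(1/\lambda)$ blocks) is essentially Wormald's own argument and is correct. The only cosmetic point is that your calibration yields the error bound $O(\lambda n)$, which is exactly Wormald's conclusion (the paper's restatement writes $o(\lambda n)$, absorbing constants into the choice of $\lambda$), so no substantive gap remains.
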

\begin{proof}[Proof of Lemma~\ref{lemma:linearregime}]
\label{lem:linearregime:proof}
We apply Theorem~\ref{thm:diffeq} as follows. As domain $D$ we choose (somewhat arbitrarily) $D := (-2T,2T) \times (-1,2)^{K+1}$. Note that $D$ contains the set $[0,T]\times [0,1]^{K+1}$, as required. We will verify the conditions in Theorem~\ref{thm:diffeq} one by one.

As already mentioned, for $k\in S_K$, $Y_k(N) = Y_k(G_N^{\mathsf{R}})$ denotes the number of vertices in $k$-components in $G_N^{\mathsf{R}}$. Note that $\sum_{k\in S_K} Y_k(N) = n$ for all $N$. Then the boundedness hypothesis \emph{(i)} is met with $\beta = 2K$, since any of the $Y_k$'s, $k \in [K]$, can change by at most $2K$ when adding an edge to $G_N$. 

The functions $f_k$, $k\in S_K$ are given by~\eqref{eq:diffeq1} and~\eqref{eq:diffeq2}. To see that they satisfy the trend hypothesis \emph{(ii)}, note that the probability that $s\in S_K^\ell$ is the (truncated) component size vector of $\ell$ randomly selected vertices is $P_s((Y_k)_{k \in S_K})$. On the other hand, if $s\in C_{\mu,\nu}$, $\mu \neq \nu$ is the component size vector, then two components of size $\mu$ and $\nu$ are combined into a component of size
$\mu+\nu$, so $Y_{\mu}$ and $Y_{\nu}$ decrease by $\mu$ and $\nu$,
respectively, and $Y_{\mu+\nu}$ (or $Y_{K+1}$, if $\mu+\nu > K$) increases by $\mu+\nu$. The case $\mu=\nu$ is slightly more complicated, as it might be that both components are identical, in which case only an internal edge (or a loop) is added to the component. However, this event occurs only with probability
$O(n^{-1})$. Since all sums are over finitely many terms, the trend hypothesis is satisfied for a suitable function $\lambda \in O(n^{-1})$.

Finally, all the functions $f_k$ are polynomials in $z_1,\ldots,z_{\omega}$, so
they trivially satisfy the Lipschitz condition \emph{(iii)}. Thus, all the assumptions of Theorem~\ref{thm:diffeq} are satisfied. 

It remains to check that the solution of the differential equations does not come close to the boundary of $D$ except for the first component. Observe that $z_k(N/n)\in [0,1]$ for all $N$ for which~\eqref{eq:concentration} holds, because $0\leq Y_k(N)\leq n$. Thus, part (b) and the continuity of the $z_k$'s imply the claim.
\end{proof}
We also state a simpler but more explicit bound that will be convenient to use in the sequel.
\begin{corollary}\label{cor:linearregime}
Let $k, \ell \in \mathbb{N}$, let $\mathsf{R}$ be a $(K,\ell)$-rule, and let $T>0$. For any
$\varepsilon >0$, with probability at least $1-O(\exp(-n^{\varepsilon}))$, 
\[
Y_k(N) = nz_k(N/n) + o(n^{2/3 +\varepsilon}),
\]
uniformly for all $k\in S_K$ and all $0\leq N \leq Tn$.
\end{corollary}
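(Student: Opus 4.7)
The plan is to derive Corollary~\ref{cor:linearregime} as a direct specialization of Lemma~\ref{lemma:linearregime} by making a careful choice of the parameter $\lambda$. The lemma provides the error bound $O(\lambda n)$ in the approximation $Y_k(N) = nz_k(N/n) + O(\lambda n)$, which holds with probability at least $1 - O\!\left(\frac{1}{\lambda}\exp(-n\lambda^3/(8K^3))\right)$. To make the error $o(n^{2/3+\varepsilon})$, I need $\lambda = o(n^{-1/3+\varepsilon})$; to make the failure probability as small as $O(\exp(-n^{\varepsilon}))$, I need $n\lambda^3$ to dominate $n^{\varepsilon}$ up to a constant (so that the polynomial factor $1/\lambda$ can be absorbed into the exponential).

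Concretely, I would set $\lambda = \lambda(n) = n^{-1/3 + \varepsilon/2}$. Then $\lambda \in \omega(n^{-1}) \cap o(1)$ as required by the hypothesis of Lemma~\ref{lemma:linearregime}, the error term satisfies $\lambda n = n^{2/3 + \varepsilon/2} = o(n^{2/3 + \varepsilon})$, and
\[
n\lambda^3 = n^{3\varepsilon/2},
\]
which grows strictly faster than $n^{\varepsilon}$. Thus $\exp(-n\lambda^3/(8K^3)) = \exp(-n^{3\varepsilon/2}/(8K^3))$, and since $1/\lambda = n^{1/3 - \varepsilon/2} = e^{O(\log n)}$ is only polylogarithmic in the exponent, the full failure bound
\[
\frac{1}{\lambda}\exp\!\left(-\frac{n\lambda^3}{8K^3}\right) = n^{1/3 - \varepsilon/2}\exp\!\left(-\frac{n^{3\varepsilon/2}}{8K^3}\right)
\]
is indeed $O(\exp(-n^{\varepsilon}))$ for all sufficiently large $n$. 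Plugging this into Lemma~\ref{lemma:linearregime} and using that $O(\lambda n) = o(n^{2/3+\varepsilon})$ yields the corollary, uniformly in $k\in S_K$ and $0\leq N\leq Tn$.

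There is essentially no obstacle beyond checking that the arithmetic of the exponents balances; the content lies entirely in Lemma~\ref{lemma:linearregime}. A minor point to keep in mind is that the hidden constant in the $O(\lambda n)$ term may depend on $K$ and $T$, but this is harmless since both are fixed before $\varepsilon$ is chosen, so $O(\lambda n) = o(n^{2/3+\varepsilon})$ regardless.
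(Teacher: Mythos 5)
Your proof is correct and is essentially the paper's argument: the paper likewise obtains the corollary by plugging a specific choice of $\lambda$ (namely $\lambda = 2Kn^{-1/3+\varepsilon}$) into Lemma~\ref{lemma:linearregime}, and your choice $\lambda = n^{-1/3+\varepsilon/2}$ with the accompanying exponent bookkeeping is an equally valid instantiation (if anything, it gives the $o(n^{2/3+\varepsilon})$ error more cleanly).
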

\begin{proof}
  Use $\lambda := 2Kn^{-1/3 + \varepsilon}$ in Lemma~\ref{lemma:linearregime}.
\end{proof}


For later reference we first collect some basic properties of the functions $z_k$. The following lemma is in parts a generalization of Theorem 2.1 in~\cite{Spencer2008}, where the phase transition was studied in the case $\ell=4$.
\begin{lemma}
\label{lem:propdiffeq}
Let $K, \ell \in \mathbb{N}$ and let $\mathsf{R}$ be a $(K,\ell)$-rule. Then the unique solution $(z_k(t))_{k \in S_K}$ of~\eqref{eq:diffeq3} has the following properties.
\begin{enumerate}
\item[(a)] $\sum_{k\in S_K}z_k(t) =1$ for all $t \geq 0$.
\item[(b)] For all $t > 0$ and all $k\in S_K$ we have $0< z_k(t)<1$.
\item[(c)] For every $1\leq i\leq K$ the function $\sum_{k=1}^i z_k$ is strictly decreasing. Moreover, $z_{\omega}$ is strictly increasing.
\item[(d)]  \label{lem:propdiffeqd} If $\mathsf{R}$ is non-degenerate,
  then there is $t_0>0$ and $c>0$ such that $1-z_{\omega}(t) \leq
  e^{-c(t-t_0)}$ for all $t\geq t_0$. In particular, $z_{\omega}(t) \to 1$ for
  $t \to \infty$.
\end{enumerate}
\end{lemma}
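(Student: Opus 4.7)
The plan is to prove (a)--(d) in order, leveraging two structural facts throughout. First, each $P_{\mu,\nu}$ is divisible by $z_\mu z_\nu$, since every $s\in C_{\mu,\nu}$ has a coordinate equal to $\mu$ and one equal to $\nu$. Second, for any $\mu,\nu\in S_K$ the ``alternating'' input $s = (\mu,\nu,\mu,\nu,\ldots)$ always lies in $C_{\mu,\nu}$: whichever index $i=\mathsf{R}(s)\in[\ell/2]$ the rule picks, $\{s_{2i-1},s_{2i}\}=\{\mu,\nu\}$; consequently $P_{\mu,\nu}(t) \geq z_\mu(t)^{\ell/2} z_\nu(t)^{\ell/2}$ at all times. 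Statement (a) then follows by differentiating $\sum_{k\in S_K} z_k$ and verifying termwise that the coefficient of every $P_{\mu,\nu}$ cancels between $\sum_k f_k^+$ and $\sum_k f_k^-$, reflecting the combinatorial fact that each merge preserves the total vertex count; since $\sum_k z_k(0)=1$, the sum stays at $1$ for all $t$.

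For (b), nonnegativity follows from the first structural fact: $f_k^- = z_k\, g_k(z)$ with $g_k\geq 0$ on the nonnegative orthant, so $z_k' \geq f_k^+ - Cz_k \geq -Cz_k$ locally, and $z_k\geq 0$ is invariant. Strict positivity is by induction on $k$: the base $z_1(t)\geq e^{-Ct}>0$ is immediate, and for the step, if $z_1,\ldots,z_{k-1}>0$ on $(0,\infty)$, the second structural fact with $(\mu,\nu)=(1,k-1)$ (or $(\mu,\nu)=(K,K)$ when $k=\omega$, since then $\mu+\nu>K$ puts the term in $f_\omega$) gives $f_k^+(z(t))>0$ for all $t>0$. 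An integrating-factor argument applied to $z_k'\geq f_k^+(z)-Cz_k$ together with $z_k(0)=0$ yields $z_k(t)\geq e^{-Ct}\int_0^t e^{Cs} f_k^+(z(s))\,ds > 0$ for all $t>0$. Combined with $\sum z_k = 1$ this gives $0 < z_k(t) < 1$.

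For (c), a case analysis distinguishing whether the merging components and their merger lie above or below the threshold $i$ shows that each contribution to $\sum_{k=1}^{i}(f_k^+-f_k^-)$ is nonpositive. Strict decrease follows from a single source of positive leakage: the alternating input $(1,\omega,1,\omega,\ldots)\in C_{1,\omega}$ gives $P_{1,\omega} \geq z_1 z_\omega^{\ell-1} > 0$ for $t > 0$ by (b), contributing the negative term $-z_1 z_\omega^{\ell-1}$. Dually, $\tfrac{d}{dt}z_\omega = f_\omega \geq 2K\, P_{K,K} > 0$ using $\mu=\nu=K$ (which lands in $f_\omega$ since $2K>K$), so $z_\omega$ is strictly increasing.

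For (d), non-degeneracy yields the essential bound
\[
f_\omega \;\geq\; \sum_{k=1}^{K} k\, P_{k,\omega} \;\geq\; \sum_{k=1}^{K} \ext_k(\mathsf{R})\cdot z_k z_\omega^{\ell-1} \;\geq\; \ext(\mathsf{R})\cdot z_\omega^{\ell-1}(1-z_\omega),
\]
using that there are $\ext_k(\mathsf{R})/k$ vectors of the form $(\omega,\ldots,k,\ldots,\omega)\in C_{k,\omega}$, each contributing $z_k z_\omega^{\ell-1}$ to $P_{k,\omega}$. Hence $\tfrac{d}{dt}(1-z_\omega) \leq -\ext(\mathsf{R})\, z_\omega^{\ell-1}(1-z_\omega)$. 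The main obstacle is to bound $z_\omega^{\ell-1}$ away from zero and upgrade this to genuine exponential decay. I would handle this in two steps: by (c), $z_\omega$ is monotone and bounded, so $z_\omega(t)\to L\leq 1$; if $L<1$, the same inequality yields $z_\omega'(t)\geq \ext(\mathsf{R})z_\omega(t_0)^{\ell-1}(1-L)>0$ for all $t\geq t_0>0$, contradicting boundedness. Therefore $z_\omega\to 1$, and choosing $t_0$ with $z_\omega(t_0)\geq 1/2$ and applying Gr\"onwall's inequality yields $1-z_\omega(t) \leq e^{-c(t-t_0)}$ for $t\geq t_0$, with $c := \ext(\mathsf{R})\cdot 2^{-(\ell-1)} > 0$.
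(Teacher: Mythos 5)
Your proof is correct and follows essentially the same strategy as the paper: termwise cancellation for (a), orthant-type positivity and an inductive integrating-factor argument for (b), the ``leakage'' analysis for (c), and the lower bound $f_\omega \geq \ext(\mathsf{R})\,z_\omega^{\ell-1}(1-z_\omega)$ with Gr\"onwall for (d). Two small differences worth noting: for nonnegativity of the $z_k$ in (b), you argue purely at the ODE level (forward invariance of the nonnegative orthant, which works because each $f_k^-$ is divisible by $z_k$ and $f_k^+$ has nonnegative coefficients), while the paper shortcuts this by citing the concentration result (Corollary~\ref{cor:linearregime}) and the fact that $Y_k/n \geq 0$; and in (d) your two-step argument---first deducing $z_\omega\to 1$ by a contradiction, then picking $t_0$ with $z_\omega(t_0)\geq 1/2$---is a mild detour compared to the paper, which simply fixes any $t_0>0$, sets $\tilde c := z_\omega(t_0) > 0$, and integrates $\tilde z' \leq -\tilde c^{\ell-1}\tilde z$ directly, obtaining both the exponential bound and the limit $z_\omega\to 1$ in one stroke. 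Neither difference affects correctness.
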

\begin{proof}\emph{Proof of (a).}
In the sum $\sum_{k\in S_k} f_k(z_1,\ldots,z_{\omega})$, for $\mu
\neq \nu$ and $s\in C_{\mu,\nu}$ the term $\mu \cdot P_s$ is added and
subtracted exactly once. For $s\in C_{\mu,\mu}$ the term $\mu \cdot P_s$ is
added and subtracted exactly twice. Hence, all terms cancel, and we have
$\sum_{k\in S_k} f_k(z_1,\ldots,z_{\omega}) = 0$. Thus, the function
$\tilde{z}(t) := \sum_{k\in S_K}z_k(t)$ satisfies the differential equation
${d\tilde z}/{dt} = 0$, with initial condition $\tilde z(0) = 1$.
Therefore, $\tilde z(t) = 1$.

\vspace{5pt}
\noindent\emph{Proof of (b).}
We will show $z_k(t) >0$ for all $k\in S_K$ and all $t>0$; the other inequality follows then directly from (a).
By applying Corollary~\ref{cor:linearregime} we infer that $z_k(t) \geq 0$ for all $k\in S_K$. By (a), this implies $z_k(t) \leq 1$ for all $k\in S_K$.

First we show that if there is $t_0 > 0$ and $1 \le k\le K$ such that
$z_k(t_0)>0$, then $z_k(t)>0$ for all $t\geq t_0$. Note that
\begin{equation*}
\label{eq:lowerboundf_k}
	z_k'(t)
	\ge -f_k^-
	\ge - k \sum_{\mu\in S_K\setminus\{k\}} \sum_{s \in C_{\mu,k}} P_s - 2k\sum_{s \in C_{k,k}} P_s .
\end{equation*}
In the last expression each occurring term $P_s$ contains a factor $z_k$, and all other factors are $\leq 1$. Thus,
by abbreviating $C_k:= \sum_{\mu\in S_K\setminus\{k\}} \sum_{s \in C_{\mu,k}} 1  + 2\sum_{s \in
  C_{k,k}} 1$, we readily get that $z_k'(t) \ge-kC_kz_k(t)$. By integrating this from $t_0$ to $t$ we obtain that $z_k(t)\geq e^{-kC_k(t-t_0)} z_k(t_0)>0$ for all $t \ge t_0$.

Next, note that $z_1(0)=1>0$, so the previous argument implies that $z_1(t) >
0$ for all $t \ge 0$. We show by induction on $k$ that $z_k(t)>0$ holds for all
$t>0$ and $1 \le k \le K$. For some $2 \le k\leq K$, assume there was $t_0>0$
with $z_k(t_0) = 0$. Since $z_k(0)=0$ then again the previous argument implies
that $z_k(t)=0$ for all $0\leq t\leq t_0$, and for this
range~\eqref{eq:diffeq1} simplifies to
\[
f_k(z_1,\ldots, z_{\omega}) = \sum_{\genfrac{}{}{0pt}{}{1\leq \mu \leq \nu}{\mu+\nu=k}}\left((\mu+\nu)\sum_{s \in C_{\mu,\nu}} P_s\right).
\]
This expression is at least $\sum_{1\leq \mu \leq \nu, \mu+\nu=k}z_{\mu}^{\ell/2}z_{\nu}^{\ell/2}$, since $(\mu,\nu,\ldots,\mu,\nu) \in C_{\mu,\nu}$. The right side is positive by induction hypothesis, which contradicts the fact that $f_k(z_1,\ldots, z_{\omega}) = {dz_k}/{dt}=0$ for all $0\leq t\leq t_0$. This shows the claim for all $1 \le k \le K$ and $t > 0$.

It remains to treat the case $k=\omega$. Equation~\eqref{eq:diffeq2} implies that
\[
	f_\omega(z_1, \dots, z_\omega) \ge P_{1,K}
\]
and since we have already shown that $z_k(t) > 0$ for all $1 \le k \le K$ and $t > 0$ this expression is $> 0$ for all $t > 0$. The claim now follows with the same contradiction as for $f_k$.

\vspace{5pt}
\noindent\emph{Proof of (c).}
In the sum $\sum_{k=1}^{i} f_k(z_1,\ldots,z_{\omega})$, for all $1\leq\mu\leq
\nu\leq K$ with $\mu+\nu \leq i$ and $s\in C_{\mu,\nu}$ the term $\mu \cdot
P_s$ is added and subtracted exactly once if $\mu \neq \nu$, and it is added
and subtracted exactly twice if $\mu =\nu$. So all these terms cancel. On the
other hand, for all $1 \leq \mu\leq \nu \leq K$ with $\mu +\nu >i$, the terms
$\mu \cdot P_s$ are only subtracted (once or twice), but not added, and by (b)
all these terms are $>0$ for $t>0$. Hence, the function $\sum_{k=1}^i z_k$ has
negative derivative for all $t>0$, so it is strictly decreasing. As for the
final remark, we infer directly that $z_{\omega} = 1 -\sum_{k=1}^K z_k$ is
strictly increasing.

\vspace{5pt}
\noindent\emph{Proof of (d).} 
By definition of $C_{\mu,\nu}$ the bounded size rule is non-degenerate, if and only
if for each $1\leq k\leq K$ there exists $s\in C_{k,\omega}$ such that $P_s =
z_k\cdot z_{\omega}^{\ell-1}$. Thus, $f_{\omega}(z_1,\ldots, z_{\omega}) \geq
(\sum_{k=1}^{K} z_k)z_{\omega}^{\ell-1}$. Fix any $t_0>0$, and let $\tilde c :=
z_{\omega}(t_0)$. From (b) and (c) we know that $\tilde c>0$ and that
$z_{\omega}(t) \geq \tilde c$ for all $t\geq t_0$, respectively. Hence,
\[
\sum_{k=1}^K f_k =  - f_{\omega} \leq - \left(\sum_{k=1}^{K} z_k(t) \right)z_{\omega}(t)^{\ell-1} \leq - \left(\sum_{k=1}^{K} z_k(t) \right) \tilde c^{\ell-1}
\]
for all $t\geq t_0$. Therefore, the function $\tilde{z} := \sum_{k=1}^K z_k$
satisfies ${d\tilde z}/{dt} \leq - \tilde c^{\ell-1}\tilde z$ for all $t\geq
t_0$. Thus $0 \leq \tilde z(t) \leq \tilde z(t_0)e^{-\tilde
  c^{\ell-1}(t-t_0)}\to 0$ for $t \to \infty$. The claim now follows with
$c:=\tilde c^{\ell-1}$ from $\tilde z(t_0)\leq 1$ and $1-z_{\omega}(t) = \tilde z(t)$.
\end{proof}
We continue with a crucial ingredient for studying the fine properties of the distribution of $\Tcon^\mathsf{R}$. We determine the limiting behavior of the fraction $z_k(t)$ of vertices in $k$-components in $G_{tn}^{\mathsf{R}}$; in particular, for all $k\in \slow(\mathsf{R})$ the next lemma asserts that $z_k(t)$ approaches $C_k e^{-\ext(\mathsf{R})t}$, for some $C_k = C_k(\mathsf{R}) > 0$. 
\begin{lemma}\label{lem:limits}
Let $K, \ell\in \mathbb{N}$ and let $\mathsf{R}$ be a $(K,\ell)$-rule.
\begin{enumerate}
  \item[(a)] For every $\eps>0$ there exists a $t_0>0$ such that for all $t\ge t_0$
   $$\sum_{k\in \fast(\mathsf{R})}z_k(t) \le \eps\cdot \sum_{k\in \slow(\mathsf{R})}z_k(t).$$ 
  \item[(b)] If $\mathsf{R}$ is non-degenerate then for $k\in \slow(\mathsf{R})$ the
    limit $$c_k := \lim_{t\to\infty}(\ext(\mathsf{R}) \cdot t+\log z_k(t))$$ exists\footnote{As will be proven later, the constant  $d_k$ from Theorem~\ref{thm:main} is $d_k = 1/k \cdot e^{c_k}$}.
\end{enumerate}
\end{lemma}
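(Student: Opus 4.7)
The plan is to treat, for each $k \in [K]$, the evolution of $z_k$ as a scalar linear ODE driven by the remaining coordinates, solve it via the integrating-factor method, and exploit that $W(t) := 1 - z_\omega(t) = O(e^{-ct})$ by Lemma~\ref{lem:propdiffeq}(d), which makes $W$ integrable on $[t_0, \infty)$. Because every $P_s$ appearing in $f_k^-$ carries at least one factor $z_k$ (as $s \in C_{k,k} \cup \bigcup_{\mu \neq k} C_{\mu,k}$ forces some $s_j = k$), one may write
\[
\frac{dz_k}{dt} = z_k(t)\, a_k(t) + b_k(t), \quad a_k(t) = -\ext_k\, z_\omega(t)^{\ell-1} + O(W(t)), \quad b_k(t) = f_k^+(z(t));
\]
the leading coefficient $\ext_k$ arises precisely because it counts by definition those $s \in C_{k,\omega}$ with one $k$ and $\ell-1$ $\omega$'s, while every other contribution to $f_k^-/z_k$ involves an extra factor $z_\mu$ with $\mu \in [K]$ and is absorbed into $O(W)$. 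The creation part is nonnegative and bounded by $b_k(t) \leq C\max_{\mu+\nu=k} z_\mu(t)\,z_\nu(t)$.

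For part (a), I would prove by strong induction on $k \in [K]$ that $z_k(t) = O(e^{-\alpha_k t})$ for some $\alpha_k \geq \ext(\mathsf{R})$ with equality iff $k \in \slow(\mathsf{R})$. The base case $k = 1$ has $f_1^+ = 0$ and yields $\alpha_1 = \ext_1$ directly. For the inductive step, $b_k(t) = O(e^{-\beta_k t})$ with $\beta_k := \min_{\mu+\nu=k}(\alpha_\mu + \alpha_\nu) \geq 2\ext(\mathsf{R}) > \ext(\mathsf{R})$. Combined with $\int(a_k + \ext_k) = O(1)$, the integrating-factor solution gives $z_k(t) = O(e^{-\min(\ext_k,\beta_k)\,t})$, possibly with an extra factor of $t$ in the resonance case $\ext_k = \beta_k$, which can be absorbed by trading any small $\eps$ in the exponent. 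Setting $\alpha_k := \min(\ext_k, \beta_k)$ (with that $\eps$-slack) gives the claim, since $\alpha_k > \ext(\mathsf{R})$ unless $\ext_k = \ext(\mathsf{R})$. For $k \in \slow(\mathsf{R})$ the nonnegativity of $b_k$ and $z_k(t_0) > 0$ (Lemma~\ref{lem:propdiffeq}(b)) give the matching lower bound $z_k(t) = \Omega(e^{-\ext(\mathsf{R})\,t})$ from the homogeneous solution alone, and part (a) follows.

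For part (b), fix $t_0 > 0$ and $k \in \slow(\mathsf{R})$, so $\ext_k = \ext(\mathsf{R})$. Define $I_k(s) := \int_s^\infty (a_k(u) + \ext(\mathsf{R}))\,du$; its integrand is $\ext(\mathsf{R})(1 - z_\omega^{\ell-1}) + O(W) = O(e^{-cu})$, hence $I_k(s)$ exists and $I_k(s) \to 0$. Substituting $\int_s^t a_k(u)\,du = -\ext(\mathsf{R})(t-s) + I_k(s) - I_k(t)$ into the explicit ODE solution transforms it into
\[
e^{\ext(\mathsf{R})\,t}\, z_k(t) = z_k(t_0)\, e^{\ext(\mathsf{R})\,t_0 + I_k(t_0) - I_k(t)} + e^{-I_k(t)} \int_{t_0}^t e^{\ext(\mathsf{R})\,s + I_k(s)}\, b_k(s)\, ds.
\]
As $t \to \infty$, the prefactor $e^{-I_k(t)} \to 1$ and the integral converges absolutely because $e^{\ext(\mathsf{R})\,s}\, b_k(s) = O(e^{-(\beta_k - \ext(\mathsf{R}))\,s})$ with $\beta_k - \ext(\mathsf{R}) \geq \ext(\mathsf{R}) > 0$. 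Both limit summands are nonnegative and the first is strictly positive (since $z_k(t_0) > 0$), so $\lim_{t \to \infty} e^{\ext(\mathsf{R})\,t}\, z_k(t) \in (0, \infty)$ and $c_k$ is its logarithm.

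I expect the main obstacle to be the bookkeeping behind the decomposition $f_k^- = z_k\,(\ext_k\, z_\omega^{\ell-1} + O(W))$: one must enumerate the vectors $s$ contributing to $f_k^-$ and isolate those producing the pure monomial $z_k\, z_\omega^{\ell-1}$ with coefficient exactly $\ext_k$ from those carrying an extra $z_\mu$ with $\mu \in [K]$ that gets absorbed into $O(W)$. A secondary subtle point is the resonance case $\ext_k = \beta_k$ in the induction, where the bound acquires a factor of $t$; this is harmless because any $\eps > 0$ may be sacrificed in the exponent while preserving $\alpha_k > \ext(\mathsf{R})$ for $k \in \fast(\mathsf{R})$.
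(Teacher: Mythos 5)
For non-degenerate rules your argument is correct, and it takes a genuinely different route from the paper. You solve each $z_k' = a_k z_k + b_k$ by an integrating factor, using that $a_k(t)+\ext_k(\mathsf{R})=\ext_k(\mathsf{R})\bigl(1-z_\omega(t)^{\ell-1}\bigr)+O(1-z_\omega(t))$ is integrable, and you run an induction on $k$ to get explicit decay rates $z_k(t)=O(e^{-\alpha_k t})$; your bookkeeping for the coefficient $\ext_k(\mathsf{R})$ of $z_kz_\omega^{\ell-1}$ in $f_k^-$ is exactly right, and the resonance case $\ext_k=\beta_k$ is handled correctly by sacrificing an arbitrarily small amount in the exponent (one should note that these losses accumulate over at most $K$ induction steps, so they stay below $1/2$ for $\eps$ small -- a routine detail). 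For (b) your variation-of-constants identity shows that $e^{\ext(\mathsf{R})t}z_k(t)$ converges to a finite positive limit, which is slightly more informative than the paper's argument, which integrates a two-sided differential inequality for $z_k'/z_k$ and shows that $\ext(\mathsf{R})t+\log z_k(t)$ is Cauchy. The paper proves (a) quite differently, via a ratio argument: it shows that whenever $z_\fast/z_\slow\ge\eps/2$ this ratio is strictly decreasing (at rate roughly $1/2$), so once it drops below $\eps/2$ it can never again exceed $\eps$; no decay rates for individual $z_k$ are needed.

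There is, however, a genuine gap: part (a) is stated for \emph{arbitrary} $(K,\ell)$-rules and is later applied to degenerate rules (it is the first ingredient in the proof of Lemma~\ref{lem:degenerate1}, hence of Theorem~\ref{thm:degenerate}), and your proof of (a) does not cover that case. You invoke Lemma~\ref{lem:propdiffeq}(d) in the strong form $1-z_\omega(t)=O(e^{-ct})$, which is stated (and true) only for non-degenerate rules; for a degenerate rule (say $\ext_1(\mathsf{R})=0$) one has $z_1'\approx -\Theta(z_1^\ell)$ in the late regime, so $z_\slow$ decays only polynomially, $\int^\infty(1-z_\omega(t))\,dt$ can diverge, and the bounds ``$\int(a_k+\ext_k)=O(1)$'' and $z_1(t)=\Theta(e^{-\ext_1 t})$ underlying your induction fail. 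Moreover the step $\beta_k\ge 2\ext(\mathsf{R})>\ext(\mathsf{R})$ needs $\ext(\mathsf{R})\ge 1$, and the claimed dichotomy ``$\alpha_k=\ext(\mathsf{R})$ iff $k\in\slow(\mathsf{R})$'' is simply false when $\ext(\mathsf{R})=0$: with $\ext_1(\mathsf{R})=0$, a fast size $k=2$ is fed by the source term $\asymp z_1^2$, which is not exponentially small, so there is no exponential rate separation between fast and slow sizes at all. In other words, your strategy identifies $\fast$ versus $\slow$ through exponential decay rates, and that mechanism only exists for non-degenerate rules; the paper's ratio argument needs only that $z_\omega(t)$ eventually exceeds $1-\eps^2$ and therefore goes through even when $\ext(\mathsf{R})=0$. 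To repair your proposal you would need a separate argument for (a) in the degenerate case, e.g.\ precisely such a comparison of $z_\fast/z_\slow$.
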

\begin{proof}
For brevity we write $\slow = \slow(\mathsf{R})$, $\fast =
\fast(\mathsf{R})$ and $\ext = \ext(\mathsf{R})$. Furthermore, we let $z_{\slow}:=\sum_{k\in \slow}z_k$ and $z_{\fast}:=\sum_{k\in \fast}z_k$. Note that $z_{\slow}(t)+z_{\fast}(t)+z_{\omega}(t) \equiv 1$ and that $z_\omega(t)$ is increasing, while $z_{\slow}(t)+z_{\fast}(t)$ is decreasing.

Recall that $z_k' = f_k = f_k^+ - f_k^-$ for all $1 \leq k \leq K$, see~\eqref{eq:diffeq1}. Here $f_k^+$ consists of terms $P_s$, $s\in
C_{\mu,\nu}$, with $1\leq \mu\leq\nu\leq K$. Every such term contains at least
two factors $z_i, z_j$ with $1\leq i,j \leq K$. Since by
Lemma~\ref{lem:propdiffeq} we know that $z_i \le 1$ for all $ i \in S_K$ there
is a $c>0$ such that
\begin{equation}
\label{eq:growthab1}
0 \leq f_k^+ \leq c(z_{\slow}+z_{\fast})^2.
\end{equation}
The term $f_k^-$ sums up terms $P_s$ for indices $s \in S_K^\ell$ for which at
least one component equals $k$. Moreover, if $k\in \slow$ then the coefficient
of the polynomial $z_kz_{\omega}^{\ell-1}$ in $f_k^-$ is exactly $\ext$,
and for $k\in \fast$ it is $\geq (\ext+1)$. All other terms in $f_k^-$ contain
at least the factor $z_k$ and another factor $z_i$, $1\leq i \leq K$. Hence, by
making the constant $c>0$ from~\eqref{eq:growthab1} larger if necessary we obtain 
\begin{equation}
\label{eq:growthab2}
 \ext\cdot z_{\omega}^{\ell-1} \cdot z_k\leq f_k^- \leq (\ext + c(z_{\slow}+z_{\fast}))\cdot z_k\qquad\text{for all $k\in \slow$,}
\end{equation}
and
\begin{equation}
\label{eq:growthab3}
 (\ext+1)z_{\omega}^{\ell-1}\cdot z_k\leq f_k^- 
 \qquad\text{for all $k\in \fast$}.
\end{equation}
Consider an arbitrary $\eps >0$. By Lemma~\ref{lem:propdiffeq} (d) there
exists $t_0'>0$ such that $z_{\omega}(t_0') \ge 1-\eps^2$, and by the monotonicity of $z_\omega(t)$,
\begin{equation}
\label{eq:growthabeps}
z_{\omega}(t)\ge 1-\eps^2 \quad\text{and}\quad z_{\slow}(t)+z_{\fast}(t)\le \eps^2\qquad\text{ for all $t\ge t_0'$.}
\end{equation}
Together with $z_k' = f_k^+-f_k^-$, the lower bound in \eqref{eq:growthab1} and the upper bound in 
\eqref{eq:growthab2} imply $z_{k}'(t)\ge -(\ext+\eps) z_k(t)$ for all $0<\eps<1/c$ and $k\in \slow$. Dividing both sides by $z_k(t)$ and integrating from $t'$ to $t$ yields
\begin{equation}
\label{eq:growthabzk}
 z_{k}(t)\ge z_k(t')\cdot e^{-(\ext+\eps)(t-t')}\quad \text{ for all $k\in \slow$ and all $t\ge t' \ge t_0'$.}
\end{equation}
With the above preparations we are ready to prove the lemma. In order to see $(a)$ we first prove an auxilliary statement. We claim that whenever there exist $t_2 > t_1 \ge t_0'$ with $t_1 \in \mathbb{R}, t_2 \in \mathbb{R} \cup \{\infty\}$ such that for all $t \in [t_1, t_2)$ we have $z_{\fast}(t)\ge \eps z_\slow(t)/2$, then 
\begin{equation}
\label{eq:fracfastslow}
\frac{z_\fast(t)}{z_\slow(t)} \le \frac{z_\fast(t_1)}{z_\slow(t_1)} \cdot
e^{-(\frac{1}2 -\eps)(t-t_1)} \quad\text{for all $t \in [t_1, t_2)$.}
\end{equation}
To prove \eqref{eq:fracfastslow}, note that the assumption on $t_1$ and $t_2$, together with \eqref{eq:growthab1}, \eqref{eq:growthab3} and \eqref{eq:growthabeps}, imply that for all $t \in [t_1, t_2)$
\begin{equation*}
\begin{split}
	z_{\fast }'(t)
	& \le
	\sum_{k \in \fast} \left(c(z_\slow + z_\fast)^2 - (\ext+1)z_\omega^{\ell-1} \cdot z_k \right) \\
	& \le
	c|\fast|\eps^2(1+\tfrac2\eps)z_{\fast}(t) -
(\ext+1)(1-\eps^2)^{\ell-1} z_{\fast}(t).
\end{split}
\end{equation*}
For $\eps>0$ sufficiently small we thus have $z_{\fast}'(t)\le -(\ext+\tfrac12) z_\fast(t)$ and so
$$
z_{\fast}(t)\le z_\fast(t_1)\cdot
e^{-(\ext+\frac12)(t-t_1)}\quad\text{for all $t \in [t_1, t_2)$.}
$$
Together with \eqref{eq:growthabzk} (where we use $t' = t_1$), this implies \eqref{eq:fracfastslow}, as claimed.

Equation \eqref{eq:fracfastslow} allows us to infer $(a)$ by contradiction as follows. First of all, note that if for all $t
\ge t_0'$ we had $z_{\fast}(t)\ge \eps z_\slow(t)/2$ then we could
apply~\eqref{eq:fracfastslow} with $t_0'$ in place of $t_1$ and $\infty$ in
place of $t_2$. Since by Lemma~\ref{lem:propdiffeq} $z_\slow(t_0') > 0$ we
infer that there is a $t_0'' \ge t_0'$ such that $z_\fast(t_0'') < \eps
z_\slow(t_0'') /2$, a contradiction. So there is a $t_1' \ge t_0'$ such that $z_\fast(t_1') < \eps z_\slow(t_1') /2$. Assume for the sake of contradiction that there is a $t_2' > t_1'$ such that $z_\fast(t_2') > \eps
z_\slow(t_2')$. Then by continuity of the $z_k$'s, there would be an interval $I = [t_1'',t_2''] \subseteq [t_1',t_2']$ such that $z_\fast(t_1'') = \eps
z_\slow(t_1'')/2$, $z_\fast(t_2'') = \eps z_\slow(t_2'')$, and 
$z_\fast(t) \geq \eps z_\slow(t)/2$ for all $t\in I$. However, this is a contradiction since \eqref{eq:fracfastslow} implies that the ratio ${z_\fast(t)}/{z_\slow(t)}$ cannot increase in $I$. Thus $z_\fast(t) \le \eps
  z_\slow(t)$ for all $t \ge t_1'$; this establishes $(a)$ with $t_0 = t_1'$.

In order to prove $(b)$, by applying \eqref{eq:growthab1},
\eqref{eq:growthab2} and \eqref{eq:growthabeps} we infer that for $0 < \eps < \min\{1/c,1\}$ and $t \ge t_0$
$$
	z_{\slow }'(t)
	= \sum_{k \in \slow} (f_k^+ - f_k^-)
	\le cK\eps^2(z_{\slow}(t) + z_\fast) - \ext \cdot (1-\eps^2)^{\ell-1} z_{\slow}(t).
$$
By using $(a)$ we further get for $t \ge t_0$
$$
	z_{\slow }'(t)
	\le 2cK\eps^2z_{\slow}(t)  - \ext \cdot (1-\eps^2)^{\ell-1} z_{\slow}(t).
$$
For all $\eps > 0$ small enough we thus get $z_{\slow}'(t)\le -(\ext-\eps) z_\slow(t)$ and so
\begin{equation}
\label{eq:growthabzslow}
z_{\slow}(t)\le z_\slow(t_0)\cdot e^{-(\ext-\eps)(t-t_0)}\quad\text{ for all
$t\ge t_0$.}
\end{equation}
Since $\mathsf{R}$ is non-degenerate we have $\ext \ge 1$.  Together with
\eqref{eq:growthabzk} (applied to $t' = t_0$) this implies that for $\eps>0$ small enough there exists
a constant $C>0$ such that 
\begin{equation}
\label{eq:growthabratio}
\frac{(z_{\slow}(t))^2}{z_k(t)} \le C e^{-t/2}\quad\text{ for all $k\in \slow$ and all $t\ge t_0$.}
\end{equation}
Next we use~\eqref{eq:growthab2} and (a) again to obtain for $k\in \slow$ and $t \ge t_0$
$$
	z_k'(t) \ge -f_k^-
	\ge - (\ext + c(z_\slow(t) + z_\fast(t)))z_k(t)
	\ge - (\ext + 2cz_\slow(t))z_k(t)
$$
and similarly, using \eqref{eq:growthab1},~\eqref{eq:growthab2}, and $z_\omega(t) = 1 - z_\fast(t) - z_\slow(t) \ge 1 - 2z_\slow(t)$
$$
	z_k'(t)
	= f_k^+ - f_k^-
	\le 4c\cdot (z_\slow(t))^2 - \ext\cdot (1 - 2z_\slow(t))^{\ell-1} z_k(t).
$$
Thus, for $t \ge t_0$ 
$$
-(\ext + 2cz_{\slow}(t))  \le \frac{z_k'(t)}{z_k(t)} \le  \frac{4c\cdot (z_\slow(t))^2}{z_k(t)} - \ext\cdot (1 - 2\ell z_\slow(t)).
$$
Now we integrate all three sides from $t_1$ to $t_2$. By~\eqref{eq:growthabzslow} and~\eqref{eq:growthabratio}, all terms on the left and the right hand side decay exponentially, except for the constant $-\ext$. Therefore, there exists $\hat t_0 \ge t_0$ so that for $t_2 > t_1\ge \hat t_0$
$$
-\ext\cdot (t_2-t_1) - \eps   \le \log z_k(t_2) - \log z_k(t_1) \le - \ext\cdot
(t_2-t_1)  + \eps.
$$  
That is, the sequence $\ext \cdot t+\log z_k(t)$ is a Cauchy sequence and thus convergent.
\end{proof}
We close this section with a statement about the existence of a large component in random graph processes, which is also true if we begin with a graph that already contains some edges. 
\begin{lemma}\label{lemma:universalfastgiant}
Let $\varepsilon >0$ and let $\ell>0$ be an even integer. Then there exists a constant $C_\varepsilon > 0$ such that for any $\ell$-Achlioptas process with arbitrary initial graph $G_0$ on $n$ vertices, after at most $C_\varepsilon n$ rounds the number of vertices in the largest component is at least $(1-\varepsilon)n$ with probability $1-o(1/n)$.
\end{lemma}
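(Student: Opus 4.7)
The strategy rests on a single ``hands-tied'' observation: whenever the largest component of the current graph has strictly fewer than $(1-\varepsilon)n$ vertices, each round produces, with conditional probability at least $\varepsilon^{\ell/2}$, an edge that decreases the number of connected components, \emph{regardless of the rule employed}. Since the component count can decrease at most $n-1$ times during the entire process, this regime cannot persist for more than $O_\varepsilon(n)$ rounds.

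For the single-round bound, let $|C_1|\ge |C_2|\ge\ldots$ be the component sizes at the beginning of the round, write $\alpha_j:=|C_j|/n$, and suppose $\alpha_1<1-\varepsilon$. Then $\sum_j \alpha_j^2 \le \alpha_1 \sum_j \alpha_j = \alpha_1 <1-\varepsilon$, so a uniformly random ordered pair of vertices lies inside the same component with probability strictly less than $1-\varepsilon$. Because the $\ell/2$ candidate pairs $\{v_1,v_2\},\ldots,\{v_{\ell-1},v_\ell\}$ drawn in one round are mutually independent, the probability that \emph{every} one of them has its two endpoints in distinct components is at least $\varepsilon^{\ell/2}$. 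On this event, whatever edge Paul picks necessarily merges two distinct components.

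To turn this into a high-probability statement, let $T^{\ast}$ denote the first round in which the largest component reaches size $(1-\varepsilon)n$, and let $X_t$ be the indicator that in round $t$ all $\ell/2$ candidate pairs connect distinct components. By the previous paragraph, $\EE[X_t\mid \mathcal{F}_{t-1}]\ge \varepsilon^{\ell/2}$ on $\{t\le T^{\ast}\}$. Setting $N:=C_\varepsilon n$ with $C_\varepsilon:=4/\varepsilon^{\ell/2}$, the stopped process $M_t:=\sum_{s\le t\wedge T^{\ast}}(X_s-\varepsilon^{\ell/2})$ is a submartingale with $|M_t-M_{t-1}|\le 1$, and Azuma--Hoeffding gives
\[
  \Pr\!\left[M_N< -\tfrac{1}{2}\varepsilon^{\ell/2}N\right] \le \exp\!\left(-\tfrac{1}{8}\varepsilon^{\ell}N\right) = e^{-\Omega(n)} = o(1/n).
\]
On the complement, if $T^{\ast}>N$ then $\sum_{t\le N} X_t \ge \tfrac{1}{2}\varepsilon^{\ell/2}N = 2n$; but each round with $X_t=1$ forces Paul to add an edge that decreases the number of components, and the total number of such decreases is bounded by $n-1$. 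This contradiction forces $T^{\ast}\le N$ with probability $1-o(1/n)$, proving the lemma.

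The main obstacle is conceptual rather than technical: the one-round probability bound only holds \emph{while} the largest component is still below the target size, so the martingale must be run on the correctly stopped process; once this is set up, the standard Azuma--Hoeffding inequality finishes the argument.
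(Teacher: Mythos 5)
Your proof is correct and follows essentially the same route as the paper: in both arguments, while the largest component is below $(1-\varepsilon)n$ every round forces a component merge with probability at least some constant depending only on $\varepsilon$ and $\ell$, and since at most $n-1$ merges can ever occur, a Chernoff-type concentration bound shows this regime ends within $O_\varepsilon(n)$ rounds with probability $1-o(1/n)$. The only differences are cosmetic: you bound the per-round merge probability by $\varepsilon^{\ell/2}$ via the second-moment estimate $\sum_j \alpha_j^2 \le \alpha_1$, where the paper exhibits disjoint sets $A,B$ of size $\ge \varepsilon n$ and gets $\varepsilon^\ell$, and you handle the conditioning with a stopped submartingale and Azuma--Hoeffding where the paper invokes domination by a binomial and the Chernoff bound.
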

\begin{proof}
We may assume that $\eps < 1/3$. Then observe that as long as there is no component with $\ge (1-\varepsilon)n$ vertices there exist two disjoint vertex sets $A$ and $B$ (not necessarily the same in each round) such that $|A|,|B|\geq\varepsilon n$ and such that no component contains vertices both from $A$ and $B$. Note that this assumption on $A$ and $B$   implies that every edge between $A$ and $B$ connects two different components. Observe also that the probability that the $\ell$-Achlioptas process will choose such an edge is at least $p_\varepsilon:=\eps^{\ell}$, as this is at least the probability that $(v_1,\ldots,v_{\ell})\in (A\times B)^{\ell/2}$.
    
Set $C_\varepsilon := 2/p_{\varepsilon}$ and assume that for $C_\varepsilon n$ rounds the size of the largest component is $\le (1-\eps)n$. From the previous discussion we know that this occurs with probability at most $\Pr[\text{Bin}(C_\varepsilon n, p_{\varepsilon}) < n]$, which is easily seen to be $o(1/n)$ by the choice of $C_\varepsilon$ and the Chernoff bounds.
\end{proof}


\section{Late Stages of the \texorpdfstring{$\mathsf{R}$}{R}--process and Proof of Theorem~\ref{thm:main}}\label{sec:late}

As described in the previous chapter, the differential equation method allows us to analyze $(K,\ell)$-rules in (relatively) early stages of the process, i.e., when a linear number of edges is added to an initially empty graph. However, as we argue in the sequel, the graph will become connected much later, after roughly $n\log n/\ext(\mathsf{R})$ rounds. For this number of rounds the differential equation method fails. Although our forthcoming result will imply that the concentration result of Lemma~\ref{lemma:linearregime} can essentially be extended up to the point where $(1/\ext(\mathsf{R})-o(1))n\log n$ edges have been added, it is not clear how such a statement can be established via a method based on differential equations. We will instead take an alternative and more direct route, where we prove inductively that the post-linear regime follows typically a suitably defined deterministic trajectory. In the remainder of this section we write $a= x \pm y$, where $y > 0$, for $a\in [x-y, x+y]$.
\additions{
\begin{lemma}
Let $K,\ell \in \mathbb{N}$ and let $\mathsf{R}$ be a non-degenerate $(K,\ell)$-rule.
For $1 \le k \le K$, let $Y_k(N)$ denote the number of vertices in components
with $k$ vertices in $G_{N}^{\mathsf{R}}$. Moreover, for $k \in
\slow(\mathsf{R})$ let $c_k = c_k(\mathsf{R})$ be defined as in
Lemma~\ref{lem:limits}.  For $t \in \mathbb{R}_+$ let $N_t =
tn/{\ext(\mathsf{R})}$.

Then there is an $\eps_0 > 0$ such that for any $0 < \eps < \eps_0$ there is a $t_0 = t_0(\eps)$ such that for all $t_0 \le t \le (1-\eps) \log n$ whp
\begin{enumerate}[(a)]
\item For all $k \in \slow(\mathsf{R})$, $Y_k(N_t) = (1 \pm \eps) \cdot e^{c_k - t} \, n$.
\item $\sum_{k\in \fast(\mathsf{R})}Y_k(N_t) \le \eps \, e^{-t} \,
  n$.\marginpar{\tiny\he{I've changed part (b), need to look over the proof to
  make sure it's tackled or modify it. Note that this statement should apply to
  $K = (1-\varepsilon)n$ which needs to change.}}
\end{enumerate}
\end{lemma}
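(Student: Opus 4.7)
The plan is to extend the DE-based concentration of Lemma~\ref{lemma:linearregime} past the linear regime via a step-by-step inductive argument. First, fix a sufficiently large constant $t_0 = t_0(\eps)$ so that Lemma~\ref{lem:limits} guarantees $\sum_{k\in\fast(\mathsf{R})}z_k(t_0/\ext(\mathsf{R})) \le (\eps/4)\sum_{k\in\slow(\mathsf{R})}z_k(t_0/\ext(\mathsf{R}))$ and $|\log z_k(t_0/\ext(\mathsf{R})) + t_0 - c_k| \le \eps/4$ for all $k \in \slow(\mathsf{R})$. Applying Corollary~\ref{cor:linearregime} at $N = N_{t_0}$ gives $Y_k(N_{t_0}) = n z_k(t_0/\ext(\mathsf{R})) + o(n^{2/3+\eta})$ with probability $1-O(\exp(-n^\eta))$, from which the base case for both (a) and (b) follows immediately, since $n z_k(t_0/\ext(\mathsf{R})) = \Theta(n)$ dominates the lower-order error.

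Partition $[t_0,(1-\eps)\log n]$ into $M=O(\log n)$ subintervals $[\tau_i,\tau_{i+1}]$ of small constant length $\delta$, and proceed by induction over $i$. Suppose (a) and (b) hold at $\tau_i$ with an appropriate relative error $\eps_i \le \eps/2$. For each round $N \in [N_{\tau_i},N_{\tau_{i+1}})$, write $t := \ext(\mathsf{R})\,N/n$; isolating in $f_k^-$ the unique term whose $\ell-1$ factors are all $z_\omega$ and invoking $z_\omega\to 1$ from Lemma~\ref{lem:propdiffeq}(d) together with the induction hypothesis yields
\[
  \EE[Y_k(N+1)-Y_k(N) \mid G_N] = -\ext_k(\mathsf{R})\,\frac{Y_k(N)}{n} + O(e^{-2t}),
\]
where the leading term is of order $e^{-t}$ and dominates the error for $t \ge t_0$. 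Summing over the $\Theta(\delta n/\ext(\mathsf{R}))$ rounds of a stage yields the deterministic prediction $\EE[Y_k(N_{\tau_{i+1}}) \mid G_{N_{\tau_i}}] = (1+O(\delta^2 + e^{-\tau_i}))\,e^{-(\ext_k/\ext)\delta}\,Y_k(N_{\tau_i})$.

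The actual trajectory is controlled by a martingale concentration estimate. Writing the Doob decomposition of $Y_k(N)$ as a predictable piece plus a martingale $M(N)$ with $|M(N)-M(N-1)|\le 2K$, the crucial observation is that any nonzero one-round increment of $Y_k$ requires at least one selected vertex to lie in a small component, an event of probability $O(n^{-1}\sum_j Y_j(N)) = O(Y_k(N)/n)$ by the induction. Hence the per-round conditional variance of $M$ is only $O(Y_k(N)/n)$, and the total variance over a stage is $O(Y_k(N_{\tau_i}))$. Freedman's inequality then yields
\[
  \Pr\bigl[\,|M(N_{\tau_{i+1}}) - M(N_{\tau_i})| \ge \lambda \,\bigr] \le 2\exp\bigl(-\Omega(\lambda^2/Y_k(N_{\tau_i}))\bigr)
\]
for $\lambda \le Y_k(N_{\tau_i})$, and choosing $\lambda = (\eps/(4M))\,Y_k(N_{\tau_i})$ gives failure probability $\exp(-\Omega(n^\eps/\log^2 n))$, easily absorbed by a union bound over the $M$ stages and the $K$ indices. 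Combined with the expected-change estimate, this closes the induction and delivers (a), while for $k\in\fast(\mathsf{R})$ the faster rate $\ext_k/\ext > 1$ yields the extra exponential factor required by (b).

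The main obstacle is the concentration near $t = (1-\eps)\log n$, where $Y_k$ has shrunk to roughly $n^\eps$. Naive Azuma--Hoeffding with increment bound $2K$ gives standard deviation $\Theta(\sqrt{n})$ per stage, vastly exceeding the target relative error $\eps Y_k = o(\sqrt{n})$. The Bernstein/Freedman refinement is therefore essential, and it is only available because nonzero increments are extremely rare in this regime, so that the true per-round variance is a factor $\Theta(Y_k/n)$ smaller than the naive bound $K^2$. A secondary subtlety is that a relative error $\eps'$ incurred per stage compounds to $(1+\eps')^M$ across $M=\Theta(\log n)$ stages, forcing $\eps' = O(\eps/\log n)$; this is why a quantitative concentration inequality, rather than a generic Azuma-type statement, is needed at each step.
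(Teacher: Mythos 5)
Your plan is essentially sound and arrives at the right result, but the concentration step is handled by a genuinely different technique than the paper's. The paper works one induction step at a time and, for the interval $[N_i,N_{i+1})$, directly counts the number $Z$ of $k$-components of $G_{N_i}^{\mathsf R}$ that never appear in a ``$k$-good'' position, treating all other rounds (the ``$N_i$-non-regular'' ones, where two or more of the $\ell$ draws land in small components) as an error of size $O(e^{-2t_i}n)$ controlled by a separate Chernoff bound. Because the indicators $Z_{j,s}$ defined this way depend only on the external randomness of each round (not on the evolving graph), the paper can cite Dubhashi--Ranjan to establish negative association of the survival indicators $Z_s$ and then apply a Chernoff bound to $Z$ directly, giving a deviation $O(e^{-t_i/2}\sqrt{n\log n})$. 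You instead take a Doob-decomposition-plus-Freedman route, controlling the predictable quadratic variation by the observation that nonzero increments of $Y_k$ require a vertex from a small component, so the per-round variance is $O(Y_k/n)$ rather than $O(K^2)$. This is the same key insight --- the events driving $Y_k$ are rare, with probability $\Theta(e^{-t})$ per round --- packaged as a variance bound rather than as a balls-and-bins argument. Both yield the same final error scale. What the paper's route buys is a cleaner technical setup: the survival indicators are functions of the fresh randomness only, so there is no need to condition on, or stop at, intermediate graph states. Your Freedman route would need a stopping-time argument to convert the random quadratic variation bound $O(\delta Y_k(N))$ (which depends on the fluctuating $Y_k(N)$) into the deterministic bound Freedman's inequality requires; this is standard but is an additional step you should spell out.

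Two smaller comments on the exposition. First, your statement that per-stage relative errors ``compound to $(1+\eps')^M$'' and therefore must be $O(\eps/\log n)$ is not quite the picture the argument actually needs: the stage-$i$ concentration error and the drift error are both \emph{additive} in the relative-error budget, and the paper's recursion $\eps_{i+1}=\eps_i+c'(e^{-t_i}+e^{t_i/2}\sqrt{\log n/n})$ makes this explicit. The series $\sum_i e^{-t_i}$ converges, and $\sum_i e^{t_i/2}\sqrt{\log n/n}$ is dominated by its last term, which is $O(n^{-\eps/2}\sqrt{\log n})=o(1)$. So one does not need a uniform $\eps/\log n$ budget per stage; in fact the natural per-stage error \emph{grows} like $e^{t_i/2}\sqrt{\log n/n}$, which is fine. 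Your choice $\lambda=(\eps/4M)Y_k(N_{\tau_i})$ happens to be achievable and so does not break the proof, but it is an unnecessarily uniform cap. Second, part~(b) is handled somewhat telegraphically; you need to note that new small components can only be created when two of the $\ell$ drawn vertices lie in small components, an event of probability $O(e^{-2t})$ per round, and feed this into the inductive upper bound alongside the $e^{-(\ext_k/\ext)\delta}<e^{-\delta}$ decay factor. With these clarifications your proposal is a correct alternative proof.
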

}
\begin{lemma}
\label{lemma:phase2}
Let $K,\ell \in \mathbb{N}$ and let $\mathsf{R}$ be a non-degenerate $(K,\ell)$-rule.
For $1 \le k \le K$, let $Y_k(N)$ denote the number of vertices in components
with $k$ vertices in $G_{N}^{\mathsf{R}}$. Moreover, for $k \in
\slow(\mathsf{R})$ let $c_k(\mathsf{R})$ be defined as in
Lemma~\ref{lem:limits}.  For $t \in \mathbb{R}_+$ let $N_t =
tn/{\ext(\mathsf{R})}$. Then there is an $\eps_0 > 0$ such that for any $0 < \eps < \eps_0$ there is a $t_0 = t_0(\eps)$ such that with probability $1-o(1/\log n)$ for all $t_0 \le t \le (1-\eps) \log n$:
\begin{enumerate}
\item[(a)] $Y_k(N_t) = (1 \pm \eps) \cdot e^{c_k(\mathsf{R}) - t} \, n$ for all $k \in \slow(\mathsf{R})$,  and
\item[(b)] $Y_k(N_t) \le \eps \, e^{-t} \, n$  for all $k\in \fast(\mathsf{R})$.
\end{enumerate}
\end{lemma}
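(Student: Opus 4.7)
The plan is to bootstrap from Corollary~\ref{cor:linearregime}, which pins down all $Y_k$'s at any constant time $t_0$, and then extend concentration all the way to $t=(1-\eps)\log n$ via a step-by-step inductive argument in which Chernoff bounds on the number of ``giant-absorption events'' do the heavy lifting.

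First, I fix a small $\delta=\delta(\eps)>0$ and place checkpoints $s_j=t_0+j\delta$ covering $[t_0,(1-\eps)\log n]$; there are $J=O(\log n/\delta)$ of them. For initialization, Lemma~\ref{lem:limits} lets me choose $t_0=t_0(\eps)$ large enough that $nz_k(t_0/\ext)=(1\pm\eps/10)e^{c_k-t_0}n$ for $k\in\slow$ and $nz_k(t_0/\ext)\le(\eps/10)e^{-t_0}n$ for $k\in\fast$. Corollary~\ref{cor:linearregime} then gives $Y_k(N_{t_0})=nz_k(t_0/\ext)+o(n)$ with super-polynomially small failure probability, which is well within the target accuracy since $e^{-t_0}n=\Theta(n)$.

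The inductive step is the heart of the argument. Assume (a), (b) hold at $s_j$. Call a \emph{giant-absorption event} the event that in one round exactly one of the $\ell$ sampled vertices lies in a $k$-component while the remaining $\ell-1$ all lie in $\omega$-components \emph{and} the rule selects the resulting $(k,\omega)$-edge; by definition of $\ext_k$, the expected decrease in $Y_k$ from these events in one round is $\ext_k(Y_k(N)/n)\,z_\omega^{\ell-1}+O(1/n)$. All \emph{other} events that change $Y_k$ (two small-component vertices in the same round, creations via $f_k^+$, merges involving two $\omega$-components, etc.) have per-round contribution to $|\Delta Y_k|$ of order $O(e^{-2t_j})$, whose cumulative effect over the $\Delta N=n\delta/\ext$ rounds of $[N_{s_j},N_{s_{j+1}}]$ is $O(\delta e^{-2t_j}n)=o(\eps\, e^{-t_{j+1}}n)$ for $t_0$ large. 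Using $z_\omega\ge 1-O(e^{-t_j})$ together with the induction hypothesis, the expected decrease in $Y_k$ is $(\ext_k/\ext)\,\delta\,Y_k(N_{s_j})(1+O(\delta))$, and the number of giant-absorption events is a sum of $\Delta N$ near-independent indicators with mean $\Theta(\delta Y_k(N_{s_j}))\ge n^{\Omega(\eps)}$. Lemma~\ref{lemma:Chernoff} then yields multiplicative concentration within $\eps/(10\log n)$ with failure probability $\exp(-n^{\Omega(\eps)})$. For $k\in\slow$, $\ext_k=\ext$ produces $Y_k(N_{s_{j+1}})=(1-\delta+O(\delta^2))Y_k(N_{s_j})$, matching the prediction $e^{c_k-s_{j+1}}n$; for $k\in\fast$, $\ext_k>\ext$ forces strictly faster decay, maintaining (b). Intermediate times $t\in(s_j,s_{j+1})$ are covered as a byproduct: the same Chernoff analysis shows that the total variation of $Y_k$ within the interval is $O(\delta Y_k(N_{s_j}))$ with the same failure probability, so $Y_k$ stays within a $(1\pm O(\delta))$ factor of the prediction throughout. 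A union bound over the $J$ steps gives overall failure probability $o(1/\log n)$.

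The hard part is the tightness of the concentration. A naive Azuma-Hoeffding bound on $Y_k$ itself is hopeless because we need relative accuracy $\eps$ on quantities as small as $n^\eps$, i.e.\ additive accuracy $n^{\Omega(\eps)}$, whereas Azuma only delivers $O(\sqrt{n})$. The remedy of counting a single type of event and invoking Chernoff succeeds \emph{only} because every other event that can change $Y_k$ has per-round probability of strictly smaller order ($e^{-2t}$ versus $e^{-t}$); this relies on the combinatorial fact that $\ext_k$ captures precisely the coefficient of the $z_k\cdot z_\omega^{\ell-1}$ monomial in $f_k^-$, while every other monomial contributing to $Y_k$ contains at least two small-component factors. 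A secondary subtlety is ensuring $z_\omega^{\ell-1}\ge 1-O(e^{-t_j})$ throughout; this is built into the induction hypothesis itself, since (a)+(b) at $s_j$ imply $\sum_{k\in[K]}Y_k(N_{s_j})=O(e^{-s_j}n)$.
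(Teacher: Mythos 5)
Your high-level plan matches the paper's: both bootstrap from Corollary~\ref{cor:linearregime} at a constant time $t_0$, then extend by an induction over short time-intervals using a Chernoff-type concentration argument, while bounding the cumulative contribution of ``rare'' rounds (multiple small-component vertices chosen) separately. The combinatorial observations you single out — that $\ext_k$ is the coefficient of $z_k z_\omega^{\ell-1}$ in $f_k^-$ and that all other events happen at rate $O(e^{-2t})$ — are exactly the identities the paper exploits.

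However, the concentration step as you wrote it has a genuine gap. You count giant-absorption events over the $\Delta N$ rounds of $[N_{s_j},N_{s_{j+1}}]$ and apply Lemma~\ref{lemma:Chernoff}, calling the per-round indicators ``near-independent.'' They are not independent: the per-round absorption probability is proportional to $Y_k(N)$, the \emph{current} value, which itself is determined by the absorption history within the interval. To ignore this you quietly use the claim that $Y_k$ stays within a $(1\pm O(\delta))$ factor of $Y_k(N_{s_j})$ throughout the interval, and then offer that claim ``as a byproduct'' of the very concentration bound you are proving — circular reasoning. Lemma~\ref{lemma:Chernoff} requires independent Bernoulli variables, so you cannot invoke it as stated.

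The paper resolves precisely this difficulty in a clean way. It defines everything with respect to the \emph{fixed} initial graph $G_{N_i}^{\mathsf R}$: a round is ``$N_i$-regular'' if at most one sampled vertex lies in a small component \emph{of $G_{N_i}^{\mathsf R}$}, and for each $k$-component $C\in\comp_k(G_{N_i}^{\mathsf R})$ one tracks whether $C$ ever lands in a $k$-good position during the interval. These per-round, per-component events are honest i.i.d. Bernoullis given $G_{N_i}^{\mathsf R}$. One then counts \emph{surviving} initial components, $Z=\sum_s Z_s$; the $Z_s$ are not independent but are negatively associated (via the balls-into-bins lemmas of Dubhashi--Ranjan), which is enough for a Chernoff bound. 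The discrepancy between $Z$ and the true $X_k(N_{i+1})$ is $O(\ell I)$ where $I$ is the number of $N_i$-non-regular rounds, a sum of genuinely independent indicators that you already know how to control. If you replace your ``absorption events counted against the evolving state'' by ``surviving components counted against the frozen state $G_{N_{s_j}}^{\mathsf R}$'' and cite negative association, your proof goes through; the rest of your error accounting (base case via Lemma~\ref{lem:limits}, geometric decay of the $O(\delta e^{-2t_j}n)$ corrections, union bound over $O(\log n)$ steps) is sound and mirrors the paper's growing error sequence $\eps_i$.
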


\begin{proof}
Recall that an event holds \emph{with log-high probability} (wlhp) if it
holds with probability $1-o(1/\log n)$ for $n\to \infty$. We use an inductive argument, where Corollary~\ref{cor:linearregime} provides us with the base case. Indeed, for any fixed $T>0$, by Corollary~\ref{cor:linearregime} there exists $\tilde\delta > 0$ such that wlhp
\begin{equation}\label{eq:phase2start}
Y_k(N_t) = nz_k(t/\ext(\mathsf{R})) + o(n^{1-\tilde\delta}) \quad \text{for all $1 \le k \le K$ and $t \leq T$}.
\end{equation}
Recall that by Lemma~\ref{lem:limits} for $k\in \slow(\mathsf{R})$ the limit $\lim_{t\to\infty}\left(\ext(\mathsf{R}) \cdot t + \log z_k(t)\right)$ exists and equals $c_k:=c_k(\mathsf{R})$. Thus we may choose $t' \in \mathbb{R}^+$ so that for all $t \ge t'$
\[
	|t + \log z_k(t/\ext(\mathsf{R})) - c_k| \le \eps^2/4.
\]
By rearranging, we obtain that $z_k(t/\ext(\mathsf{R})) = (1 \pm \eps^2/2)
\cdot e^{c_k} e^{-t}$ for sufficiently small $\eps$. Choose $t'' \in \mathbb{R}^+$ such that for all $t \ge
t''$ we have $e^{c_k-t}\le \eps^2/4$ for all $k \in
\slow(\mathsf{R})$ . Then in particular we have for all $t \ge t''$ and all $k \in
\slow(\mathsf{R})$ that $e^{-c_k} \, z_k(t/\ext(\mathsf{R})) \le \eps^2/2$.
Finally, from~Lemma~\ref{lem:limits} it follows that we can choose a $t''' \in \mathbb{R}^+$
such that for all $t \ge t'''$
\begin{equation}
\label{eq:fasttiny}
	\sum_{k \in \fast(\mathsf{R})} z_k(t) \le \eps^2/(2K \max_{k \in \slow(\mathsf{R})}e^{c_k}) \cdot \sum_{k \in \slow(\mathsf{R})} z_k(t).
\end{equation}
Set $\tilde t_0 = \max\{t', t'', t'''\}$. Then, by definition of $\tilde t_0$ and~\eqref{eq:phase2start} with $T = \tilde t_0+1$ we have with probability $1-o(1/n)$
\[
	Y_k(N_{t}) = (1 \pm \eps^2) \cdot e^{c_k-t} n
\quad \text{for all } k\in\slow(\mathsf{R}) \text{ and all } \tilde{t}_0 \leq t \leq \tilde{t}_0+1.
\]
This shows with room to spare (a) for $\tilde t_0\leq t \leq
\tilde{t}_0+1$.  Moreover, from~\eqref{eq:phase2start}
and~\eqref{eq:fasttiny} we immediately obtain that with probability
$1-o(1/n)$ we have $Y_k(N_{t}) \le
\eps^2 e^{-t} n$, for all $k \in \fast({\mathsf{R}})$ and all $\tilde{t}_0 \leq
t \leq \tilde{t}_0+1$; this shows with room to spare (b) for all $\tilde{t}_0
\leq t \leq \tilde{t}_0+1$. These statements will serve as the base case of our
induction, so fix any $t_0 \in [\tilde{t}_0, \tilde{t}_0+1]$.

The reason why we showed (a) and (b) with much smaller error terms than required (for $t = t_0$) is that in order to cover all cases $t_0 \le t \le (1-\eps)\log n$ we will prove inductively a statement in which the error terms will gradually increase. Formally, let
\begin{equation}
\label{eq:fipm}
        \eps_0 := \eps^2
	\quad \text{and} \quad
	\eps_{i+1} = \eps_i + c'\left(e^{-t_i} + e^{t_i/2}\sqrt{\log n/n}\right) \text{~for~} i \in \N_0
\end{equation}
for a constant $c'>0$ that we will fix later (and that will not depend on $\eps$). 
By expanding the recursive definition it is easy to see that for $n$ sufficiently large we have  $\eps_i \le \eps^2 +  2c'e^{-t_0} \le c'' \eps^2$, by choice of $t_0$. Note that $c''$ does not depend on $\eps$, as $c'$ does not. If we thus choose $\eps < 1/c''$ we obtain that  $\eps_i \le \eps$ for all $i \in \N_0$.

Recall that our choice of $\tilde t_0$ implies that for all $t_0 \in [\tilde t_0, \tilde t_0+1]$ we have $Y_k(N_{t_0}) = e^{c_k-t_0} n (1\pm \eps_0)$. Let $t_i := t_0 + i$ for $i\in \N$ and write
$N_i = N_{t_{i}}$ to simplify notation. With this notation at hand it suffices to prove that with probability $1-o((n\log n)^{-1})$ we have for all $1\le i\le (1-\eps)\log n$ that
\begin{equation}
\label{eq:inductionSlow}
	Y_k(N_i) = e^{c_k - t_i}n \cdot (1\pm \eps_i) \qquad \text{ for  } k \in \slow(\mathsf{R})
\end{equation}
and
\begin{equation}
\label{eq:inductionFast}
	Y_k(N_i) \le \eps_i e^{-t_i}\, n \text{ for  } k\qquad \in \fast(\mathsf{R}).
\end{equation}
Then the proof of the lemma is completed by a union bound over the choice of $t_0$. 

We use induction over $i$. We already know that the claim is true for $i=0$.
For the induction step we will show that assuming the claim holds for some
$i\ge 0$ it also holds for $i+1$ with probability $1-o(n^{-2})$.

A round $N_i \le N < N_{i+1}$ is called \emph{$N_i$-regular} if out of the $\ell$ randomly selected vertices there is at most one vertex $v$ that is contained in a small component (i.e., in a $k$-component for $1 \le k \le K$) in $G_{N_i}^\mathsf{R}$, and $N_i$-non-regular otherwise. Note that the definition refers to the graph $G_{N_i}^\mathsf{R}$, not to $G_N^\mathsf{R}$. This seemingly strange definition has the advantage that for different rounds the events that a specific round is $N_i$-regular are \emph{independent}. In particular, let $I$ be the number of $N_i$-non-regular rounds. The induction assumption guarantees that the total number of vertices in $k$-components, where $1 \le k \le K$, is in $O(e^{-t_i}n)$. Thus, the probability that any succeeding round is $N_i$-non-regular is in $O(e^{-2t_i})$. Since $N_{i+1} - N_i = n/\ext(\mathsf{R})$ it follows that the expected number of non-regular rounds is at most $c_1 e^{-2t_i}n$, for some $c_1 > 0$. By the Chernoff bounds there is a constant $C>0$ (not depending on $\eps$) such that
\begin{equation}
\label{eq:tailboundsI}
	\Pr\left[I \ge \max\left\{Ce^{-2t_i}n, e^{-t_i/2}\sqrt{n}\right\}\right] \leq \min\left\{2^{-Ce^{-2t_i}n},2^{-e^{-t_i/2}\sqrt{n})}\right\} =o(n^{-2}) 
\end{equation}
as $t_i \le t_0+(1-\eps)\log n$.

In order to prove the induction step consider some $1\le k \le K$. Denote a position $1 \le p \le \ell$ as \emph{$k$-good} if a vector $(\omega,\ldots,\omega,k,\omega,\ldots,\omega)$ with the $k$ at position $p$ results in merging the $k$-component with an $\omega$-component. Recall that the number of $k$-good positions is exactly $\ext_k(\mathsf{R})/k$.

For ease of notation we use $X_k(N)$ to denote the number of $k$-components in $G_{N}^{\mathsf{R}}$. Clearly, $X_k(N) \equiv Y_k(N) / k$ for all $k\in [K]$. To give bounds on $X_{k}(N_{i+1})$, let $Z$ be the number of $k$-components $C\in \comp_k(G_{N_i}^{\mathsf{R}})$ that never appear in a $k$-good position during rounds $[N_i,N_{i+1})$. Observe that if a $k$-component appears in an $N_i$-regular round $N \geq N_i$, then it is merged with an $\omega$-component if and only if its position is $k$-good (since the other vertices in this round belong to $\omega$-components in $N_i$ and in all subsequent rounds). Thus $Z$ counts basically the number of $k$-components in round $N_{i+1}$, miscounting only components that appear in $N_i$-non-regular rounds. Since there are at most $\ell I$ such components,
\begin{equation}\label{eql11:zk}
Z-\ell I \le X_k(N_{i+1}) \le Z + \ell I.
\end{equation}
In the sequel we bound $Z$. We proceed as follows: enumerate the $X_k(N_i)$ $k$-components in $G_{N_i}^{\mathsf{R}}$ from $1$ to $X_k(N_i)$ in an arbitrary but fixed way and let $Z_{j,s}$ be a Bernoulli random variable that is one if and only if in round $j$ we choose a vertex from the $s$th component in a $k$-good position. Thus,
\[
\Pr[Z_{j,s}=1] = 1- \left(1- \frac{k}{n}\right)^{\ext_k(\mathsf{R})/k} = \frac{\ext_k(\mathsf{R})}{n}\pm \frac{c}{n^2}
\]
for a constant $c$ that depends on $K$ and $\ell$ but not on $\eps$. Clearly,
$$Z = \sum_{s=1}^{X_k(N_i)} Z_s,~~\text{where $Z_s$ is the indicator function for $\sum_{j=N_i+1}^{N_{i+1}} Z_{j,s} = 0$}.$$
Therefore, using the induction assumption on $X_k(N_i)$ and bounding the error terms very generously, we get
\begin{align}\label{eql11:z}
\EE[Z]
&= X_k(N_i) \cdot \left(1- \frac{\ext_k(\mathsf{R})}{n}\pm \frac{c}{n^2} \right)^{{n}/{\ext(\mathsf{R})}} \\
& = e^{-\ext_k(\mathsf{R})/\ext(\mathsf{R})}\cdot X_k(N_i)  \cdot \left(1\pm 2c/n\right)\nonumber\\
&= e^{-\ext_k(\mathsf{R})/\ext(\mathsf{R})}\cdot X_k(N_i)   \pm \tilde c e^{-2t_i}n, \nonumber
\end{align}
for some constant $\tilde c$ that  depends on $K$, $\ell$ and $\max_{k\in\slow(\mathsf{R})} c_k(\mathsf{R})$ but not on $\eps$. 

Note that for $k\in\slow(\mathsf{R})$ (i.e., for $\ext_k(\mathsf{R}) = \ext(\mathsf{R})$) the expectation agrees with the prediction of the statement: the main term is $X_k(N_i)/e$, as desired. We thus just need to show that $Z$ is concentrated. For the alert reader this should come as no surprise: the variables $X_{j,s}$ are defined similarly as in a balls-and-bin game where  the variable $Z$ counts the number of empty bins. It is well known that in a balls-and-bin game the variables are negatively associated and one can thus apply Chernoff bounds to the variable $Z$. 
Adapted to our scenario we can argue as follows.    For a fixed round $j$, the random variables $(Z_{j,s})_s$ are Bernoulli random variables that sum up to at most $1$. By adding an additional variable $Z_{j,0} := 1-\sum_{s\ge 1}Z_{j,s}$ we may thus assume that they sum up to exactly one and~\cite[Lemma 8]{Dubhashi96} thus implies that these variables are negatively associated. Moreover, for $j\not=j'$ the variables are independent and~\cite[Lemma 7]{Dubhashi96} thus implies that the whole sequence $(Z_{j,s})_{j,s} $ is also negatively associated. Finally, the functions $Z_s$ are given by applying a decreasing function to the variables $(Z_{j,s})_j$ and~\cite[Lemma 7]{Dubhashi96} thus implies that the variables $Z_s$ are also negatively associated. Therefore, we may apply the Chernoff bound to $Z = \sum_{s=1}^{ X_k(N_i)}Z_s$~\cite[Prop.\! 5]{Dubhashi96}.
Using \eqref{eql11:z} and our induction assumption on $X_k(N_i)$ we thus obtain
\begin{equation}
\label{eq:concentrationofXk}
	\Pr\left[|Z - \EE[Z]| \ge C e^{-t_i/2} \sqrt{n\log n}\right] \leq e^{-C\log n/3} = o(n^{-2}),
\end{equation}
for an appropriately chosen constant $C>0$ (not depending on $\eps$).

It  remains to collect the pieces. From \eqref{eql11:zk},~\eqref{eql11:z} and~\eqref{eq:concentrationofXk} we obtain that with probability $1-o(n^{-2})$,
\begin{align*}
X_k(N_{i+1}) & = e^{-{\ext_k(\mathsf{R})}/{\ext(\mathsf{R})}}\cdot  X_k(N_{i}) \pm \tilde c e^{-2t_i}n \pm Ce^{-t_i/2} \sqrt{n\log n} \pm \ell I.
\end{align*}
We can bound the effect of $I$ by using~\eqref{eq:tailboundsI}. We immediately observe that bounds terms in~\eqref{eq:tailboundsI} are of the same form (or smaller) than the terms that we already have. We can thus incorporate the effect of $I$ by just increasing the constants in the error terms.
For  $k\in\slow$ we thus get from the induction assumption that 
$$
X_k(N_{i+1}) = \frac1k \, e^{c_k-t_{i+1}}n \cdot \left(1\pm \eps_i \pm c' e^{-t_i} \pm c' e^{t_i/2} {\textstyle\sqrt{\log n/n}}\right),
$$
for an appropriate constant $c'>0$ that does not depend on $\eps$. This proves the inductive step for $k\in \slow(\mathsf{R})$, cf.\ the definition of $\eps_{i+1}$. The claim for $k\in\fast$ follows similarly.
\end{proof}
We are now ready to prove the main theorem, which we restate here in a slightly stronger form. 
\begin{reptheorem}{thm:main}
Let $K,\ell \in \mathbb{N}$ and let $\mathsf{R}$ be a non-degenerate $(K,\ell)$-rule. For $1 \le k \le K$ let $Y_k(N)$ denote the number of vertices in $k$-components in $G_{N}^{\mathsf{R}}$. Moreover, for $k \in \slow(\mathsf{R})$ let $d_k := e^{c_k(\mathsf{R})}/k$, where $c_k(\mathsf{R})$ is defined as in Lemma~\ref{lem:limits}. Then, if $\ext(\mathsf{R})<2K+2$ the following statements are true.
\begin{enumerate}
\item[(a)] For any $c \in \mathbb{R}$, with probability\footnote{This is stronger than the statement given in the introduction, and it is needed in the proof of part (c).} $1-o(1/\log n)$ we have for all $N \geq (n\log n+cn)/\ext(\mathsf{R})$ and all $k\in \fast(\mathsf{R})$ that $Y_k(N) = 0$, and there is only one component with more than $K$ vertices in $G_N^{\mathsf{R}}$. 
\item[(b)] For any $c \in \mathbb{R}$,
\[
\lim_{n\to\infty}\Pr\left[\Tcon^{\mathsf{R}} \leq \frac{n\log n + cn}{\ext(\mathsf{R})}\right] =  \prod_{k\in \slow(\mathsf{R})}e^{-d_ke^{-c}}.
\]
\item[(c)] Let $c_0 := \log\left(\sum_{k\in \slow(\mathsf{R})}d_k\right)$. Then
\[
\EE[\Tcon^{\mathsf{R}}] = \frac{n\log n+\gamma n +c_0n}{\ext(\mathsf{R})} + o(n).
\]
\item[(d)] For $k\in [K]$, let $T_k^\mathsf{R} := \min\{T \mid \forall N \geq T: Y_k(N) = 0\}$
  be the time at which the last $k$-component vanishes. Then $\Pr[T_k^{\mathsf R}=\Tcon^{\mathsf R}]
  \stackrel{n\to\infty}{\longrightarrow} 0$ for $k \in \fast(\mathsf{R})$, and
  for $k \in \slow(\mathsf{R})$,
\[
  \Pr[T_k^{\mathsf R}=\Tcon^{\mathsf R}] \stackrel{n\to\infty}{\longrightarrow} \frac{d_k}{\sum_{i\in\slow(\mathsf{R})}d_i}.
\]
\end{enumerate}
\end{reptheorem}
\begin{proof}
Recall that an event holds \emph{with log-high probability} (wlhp) if it
holds with probability $1-o(1/\log n)$ for $n\to \infty$. Throughout, given
$\delta>0$ and $c\in \mathbb R$, we use the following notation:
\[
  N_{\delta} := \left\lfloor \frac{(1-\delta)n\log n}{\ext(\mathsf
  R)}\right\rfloor \qquad N_c := \left\lfloor\frac{n\log n +c n}{\ext(\mathsf
  R)}\right\rfloor \qquad N_{\infty} := 2\left\lfloor \frac{n\log
  n}{\ext(\mathsf R)}\right\rfloor.
\]
To avoid any confusion, note that we use the formula for $N_c$ also for
different values of $c$. For example, we use $N_{c+\eps}$ in the obvious
meaning.

For all statements we will make use of the following basic observation. Fix
some $\delta < 1/2$. By Lemma~\ref{lemma:phase2}, the total number of vertices
in small components (in components of size $k$ for some  $1 \le k \le K$) is
wlhp in $O(n^{\delta})$, and this number cannot increase in succeeding
rounds. Similarly as in the proof of Lemma~\ref{lemma:phase2}, we call a round
\emph{regular} if at least $\ell-1$ of the randomly selected vertices belong to $\omega$-components. Let $\mathcal{E}_\delta$ be the event that all rounds between $N_\delta+1$ and $N_\infty$ are regular. The probability that a round is not regular is in $O(n^{2\delta-2})$, and so $\Pr[\mathcal{E}_\delta] = 1-O(N_\infty n^{2\delta-2}) =1-o(1/\log n)$. So $\mathcal{E}_\delta$ will occur wlhp. Note that $\mathcal{E}_\delta$ implies that no new $k$-component, where $1 \le k \le K$, is created between rounds $N_\delta$ and $N_\infty$. 

We will make frequent use of this observation in the following way. Let $1\leq k \leq K$. As in the proof of Lemma~\ref{lemma:phase2}, we denote a position $1 \le p \le \ell$ as \emph{$k$-good} if a vector $(\omega,\ldots,\omega,k,\omega,\ldots,\omega)$ with the $k$ at position $p$ results in merging the $k$-component with an $\omega$-component. Recall that the number of $k$-good positions equals $\ext_k(\mathsf{R})/k$. Now assume that $C \in \comp_k\big(G_{N_\delta}^{\mathsf{R}}\big)$ is a $k$-component in round $N_\delta$. Then the probability that in some fixed round $N \geq N_\delta$, the component $C$ does not appear in a $k$-good position is exactly $(1-k/n)^{\ext_k(\mathsf{R})/k}$.

Note that a $k$-component that appears at a $k$-good position is merged with an $\omega$-component, unless the round is not regular. Since wlhp there is no non-regular round between rounds $N_\delta$ and $N_\infty$ we have, using Markov's inequality, for any $0 < \delta < 1/2$, $M\in \mathbb{N}$ and $N \in [N_\delta, N_\infty)$
\begin{equation}
\label{eq:thm1preparation}
\begin{split}
	\Pr\Big[Y_k(N) >0  \mid & |\comp_k\big(G_{N_\delta}^{\mathsf{R}}\big)| \le M\Big] \\
	& \le M \cdot \left(1-{k}/{n}\right)^{(N-N_\delta)\ext_k(\mathsf{R})/k}+ o(1/\log n).
\end{split}
\end{equation}
With these preparations we come to the proof of the specific statements.
 
\vspace{10pt}
\noindent\emph{Proof of (a).} We first prove the statement for all $N \in [N_c, N_\infty)$. Let $k\in \fast(\mathsf{R})$. Then the right hand side of~\eqref{eq:thm1preparation}, applied for $N = N_c$ and $M= n^{\delta}$, is $o(1/\log n)$, since $\ext_k(\mathsf{R}) \ge \ext(\mathsf{R}) + 1$ and $N_c-N_\delta = n(\delta \log n+c)/\ext(\mathsf{R})$. As wlhp all rounds between $N_\delta$ and $N_\infty$ are regular, and since a small component can only be created in a non-regular round, this shows that 
\[
	\Pr\left[\forall N\in [N_c,N_\infty), k\in\fast(\mathsf{R}):~Y_k(N) = 0\right]
	= 1-o(1/\log n).
\]
Actually, we can say a little more. We interpret $\mathsf{R}$ as a $(K',\ell)$-rule $\mathsf{R}'$ for $K' := 24(K+1)$, as outlined in Remark~\ref{rem:extension} (a). Note that $K' >12\ext(\mathsf{R})$, since by assumption $\ext(\mathsf{R}) < 2K+2$. Then $\{K+1,\ldots,K'\} \subseteq \fast(\mathsf{R}')$ and $\ext(\mathsf{R}') = \ext(\mathsf{R})$, so by the same argument as before wlhp all components of these sizes (i.e.\ in $\fast(\mathsf{R}) \cup \{K+1, \dots, K'\}$) will be extinct at round $N_c$.

Next we show that in round $N_c$, wlhp all vertices that are in an $\omega$-component are actually contained in a single component. Fix $\delta = 1/3$, and note that the event $\mathcal{E}_\delta$ guarantees that no new $\omega$-component (i.e., with more than $K'$ vertices) is created between round $N_\delta$ and $N_c$. We apply the same idea as before, but now instead of regular rounds we consider $\omega$-rounds, i.e., rounds in which all $\ell$ randomly selected vertices are in $\omega$-components. Let $\mathcal{X}_\omega(N)$ be the set of components in $G_N^\mathsf{R}$ with more than $K'$ and less than $n/2$ vertices, and set $X_\omega(N) = |\mathcal{X}_\omega(N)|$. Moreover, let $\eps > 0$ be so small that $(1-\eps)^{\ell -1} > 1/2$. Then by Lemma~\ref{lemma:universalfastgiant}, wlhp at round $N_\delta$ there is a giant component with $\geq (1-\eps)n > n/2$ vertices; we call this event $\mathcal{E}_{\text{giant}}$. Then it suffices to show that wlhp for every $C\in \mathcal{X}_\omega(N_\delta)$ an edge between $C$ and the giant is inserted until round $N_c$.

Fix a component $C\in \mathcal{X}_\omega(N_\delta)$, and consider some $N \in [N_\delta,N_c]$. If all chosen vertices $v_1,\ldots,v_\ell$ of the $N$th round are in $\omega$-components, then $\mathsf{R}$ will select some edge, say $\{v_{2i-1}, v_{2i}\}$ for some $1 \le i \le \ell/2$. So, if one of $v_{2i-1}$ and $v_{2i}$ is in $C$, and the other $\ell-1$ vertices are in the giant, then $C$ will be connected to the giant. The probability that this happens is at least $2K'(1-\eps)^{\ell-1}/n>K'/n$. In a regular round, no new large component is created. Hence, by the same argument as for~\eqref{eq:thm1preparation}, and using $N_c -N_\delta = \frac{n\log n/3 +cn}{\ext(\mathsf R)} \geq \frac{n\log n}{6\ext(\mathsf R)}$ for sufficiently large $n$ and $X_\omega(N_\delta) \le n$, we may bound
\begin{align*}
	\Pr[X_\omega(N_c) >0 \mid \mathcal{E}_{\text{giant}}]
	& \leq n\,  \left(1-\frac{K'}{n}\right)^{N_c-N_{\delta}} + o({1}/{\log n})\\
	& \leq n e^{-K'\log n/6\ext(\mathsf R)} + o({1}/{\log n})
	= o({1}/{\log n}).
\end{align*}
Since all rounds between $N_c$ and $N_\infty$ are regular wlhp, this proves (a) for all rounds $N \in [N_c,N_\infty)$. To see the claim for $N \ge N_\infty$ recall that wlhp there are $O(n^{\delta})$ components with at most $K$ vertices in $G_{N_\delta}^\mathsf{R}$. Moreover, since $\mathsf{R}$ is non-degenerate we have $\ext_k(\mathsf{R}) \ge \ext(\mathsf{R}) \ge 1$ for all $1 \le k \le K$. Together with~\eqref{eq:thm1preparation}, applied for $N=N_\infty=2\lfloor n\log n /\ext(\mathsf{R})\rfloor$ and any $0 < \delta < 1/2$, this implies that
\[
	\Pr[\forall 1 \le k \le K:~Y_k(N_\infty) =0] = 1-o(1/\log n).
\]
Thus, wlhp $G_{N_\infty}^\mathsf{R}$ is connected. Hence, the claim also follows for $N \ge N_\infty$.

\vspace{10pt}
\noindent\emph{Proof of (b).} We will resort to the so-called \emph{method of
moments}. Suppose that we have $r$ sequences $Z_i(1), Z_i(2),
\dots$ of random variables, $1\leq i \leq r$, with support on $\mathbb{N}_0$.
Suppose further that there are $\lambda_1,\ldots,\lambda_k > 0$ such that for
all $e_1,\ldots,e_r \in \mathbb{N}_0$ $$\EE[Z_1^{\underline{e_1}}(N)\cdot
Z_2^{\underline{e_2}}(N) \cdots Z_r^{\underline{e_r}}(N)]  \to
\lambda_1^{e_1}\cdots \lambda_r^{e_r} \qquad \text{as }
N\to\infty,$$
where $n^{\underline{x}} := n(n-1) \cdots (n-x+1)$. Then the joint
distribution of the $Z_k(N)$ converges to the joint distribution of
independent Poisson random variables with parameters
$\lambda_1,\ldots,\lambda_r$, i.e.\ for all $z_1,\ldots,z_r \in \mathbb{N}_0$
we have $\Pr[Z_1(N) = z_1 \wedge \ldots \wedge Z_r(N)=z_r] \to \prod_{1\leq
k\leq r}e^{-\lambda_k}\lambda_k^{z_k}/z_k!\,$ as $N \to \infty$, see
e.g.~\cite[Theorem 6.10]{RandomGraphsBook}.

Let $\delta>0$ be sufficiently small and recall that $N_\delta =
\lfloor {(1-\delta)n\log n}/{\ext(\mathsf{R})}\rfloor$. For any $\eps = \eps(n)>0$, let $\mathcal{E}(\eps)$ be the event that for $k\in\slow(\mathsf{R})$ we have $|\comp_k(G_{N_\delta}^{\mathsf R})| = (1\pm \eps)d_kn^\delta$. By Lemma~\ref{lemma:phase2}, $\Pr[\mathcal{E}(\eps_0)] =1-o(1)$ for every $\eps_0>0$. By a standard argument there exists also a (possibly very slowly converging) sequence $\eps = \eps(n) = o(1)$ such that $\Pr[\mathcal{E}(\eps)] =1-o(1)$ for $n\to \infty$.

For every $k\in\slow(\mathsf R)$, every $N\geq N_\delta$, and every $C\in \comp_k(G_{N_\delta}^{\mathsf R})$, let $Z(C)$ be a Bernoulli random variable that is $1$ if $C$ does not appear in a $k$-good position between rounds $N_\delta$ and $N$ (recall that a position is called \emph{$k$-good} if a $k$-component that appears in this position in a regular round is merged into an $\omega$-component). Moreover, for every $k\in\slow(\mathsf R)$ and $N\geq N_\delta$ let $Z_k(N) := \sum_{C\in \comp_k(G_{N_\delta}^{\mathsf R})} Z(C)$. We will apply the method of
moments to the random variables $Z_k(N_c)$ in the conditional space in which ${\cal E}(\eps)$ occurs.
More precisely, we will show that for every vector $e\in \mathbb N_0^{K}$ we have
for all large enough $n$ that
\begin{equation} \label{eq:fallingmoments}
  \EE\left[\prod_{k\in\slow(\mathsf{R})}Z_k^{\underline{e_k}}(N_c) \mid \mathcal E(\eps)\right]
	= \prod_{k\in\slow(\mathsf{R})} ((1\pm3\eps)d_ke^{-c})^{e_k}.
\end{equation}
This implies the claim as follows: recall that $\mathcal E_\delta$ is the event that all rounds between $N_\delta$ and $N_\infty$ are regular. Then by (a) and since $\Pr[\mathcal E_\delta]=1-o(1)$ we have
\[
\begin{split}
  \Pr[\Tcon\leq N_c \mid \mathcal{E}(\eps)] & = \Pr[\forall k\in\slow(\mathsf{R}): Z_k(N_c) = 0\mid \mathcal E(\eps)] + o(1),
\end{split}
\]
where the error term does not depend on $\eps$. Since $\Pr[\mathcal{E}(\eps)] = 1-o(1)$, for $n\to\infty$ the left hand side converges to $\Pr[\Tcon\leq N_c]$, while the right hand side converges to $\prod_{k\in \slow(\mathsf{R})} \exp\{-d_ke^{-c}\}$ by the method of moments, thus proving the claim.

It remains to prove \eqref{eq:fallingmoments}. For this, let $H$ be the set of all ordered tuples $((C_{k,i})_{i=1}^{e_k})_{k\in \slow(\mathsf{R})}$ of pairwise distinct components $C_{k,i}\in \comp_k\big(G_{N_\delta}^{\mathsf{R}}\big)$. For $h\in H$ we write $Z_h(N) = 1$ if $\prod_{C\in H}Z(C)=1$ or, in other words, if none of the components $C_{k,i}$ of $h$ occur in a $k$-good position between rounds $N_\delta$ and $N$. Note that an  elementary counting argument implies
\[
	\prod_{k\in\slow(\mathsf{R})}Z_k^{\underline{e_k}}(N) = \sum_{h\in H} Z_h(N)
\] 
Consider any tuple $h= ( (C_{k,i})_{i=1}^{e_k} )_{k\in \slow(\mathsf{R})}\in H$. The probability that for a fixed $k\in \slow(\mathsf{R})$ none of the components $(C_{k,i})_{i=1}^{e_k}$ appears in a $k$-good position in a given round $N$ is $(1-ke_k/n)^{\ext(\mathsf R)/k}$ (even if we condition on $\mathcal E(\eps)$). Similarly, using the fact that $\Pr[\forall i: \mathcal{A}_i] = \prod_i \Pr[\mathcal{A}_i \mid \forall j < i: \mathcal{A}_j]$, we deduce that the probability that none of the components $\{C_{k,i}\}_{1 \le i \le e_k}$ appears in a $k$-good position in a given round $N$ is
\[
	F:= \prod_{k \in \slow(\mathsf{R})}\left(1-\frac{ke_k}{n - O(1)}\right)^{\ext(\mathsf R)/k},
\]
where the $O(1)$ term depends only on $e_1, \dots, e_k$ and $\mathsf{R}$.  Thus, for every $N\geq N_\delta$,
\[
	\EE\left[\prod_{k\in\slow(\mathsf R)}Z_k^{\underline{e_k}}(N+1)\mid \mathcal E(\eps)\right]
	= F \cdot 
	\EE\left[\prod_{k\in\slow(\mathsf R)}Z_k^{\underline{e_k}}(N)\mid \mathcal E(\eps)\right].
\]
Moreover, by definition of $\mathcal{E}(\eps)$ we have 
\[\EE\left[\prod_{k\in\slow(\mathsf R)}Z_k^{\underline{e_k}}(N_\delta)\mid \mathcal E(\eps)\right] =
\prod_{k\in\slow(\mathsf R)}\left((1\pm 2\eps)d_kn^{\delta}\right)^{e_k}, \]
for all large enough $n$, and so by induction we get for any $c \in \mathbb{R}$
\[
\begin{split}
	\EE\left[\prod_{k\in\slow(\mathsf R)}Z_k^{\underline{e_k}}(N_c)\mid \mathcal E(\eps)\right]
  &= 
  F^{N_c - N_\delta} \cdot \left((1\pm 2\eps)d_kn^{\delta}\right)^{e_k}.
\end{split}
\]
Note that $1-\frac{ke_k}{n - O(1)} = ( e^{-k/n + O(n^{-2})})^{e_k}$. Thus,
\[
\begin{split}
	\EE\left[\prod_{k\in\slow(\mathsf R)} \hspace{-3mm} Z_k^{\underline{e_k}}(N_c)\mid \mathcal E(\eps)\right]
  &= \prod_{k\in \slow(\mathsf R)} \left((1\pm 2\eps)\left(e^{-k/n+O(n^{-2})}\right)^{\ext(\mathsf R)(N_c-N_\delta)/k}d_kn^{\delta}\right)^{e_k}.
\end{split}
\]
Since $\ext(\mathsf R)(N_c-N_\delta) = cn+\delta n\log n$ the claim in~\eqref{eq:fallingmoments} follows immediately.

For later reference (and omitting the details), we note that a slight variation on this argument shows the following: for every $\eps>0$ and
$c\in\mathbb R$, and for all $k\in\slow(\mathsf R)$, we have, as $n\to\infty$
\begin{equation}\label{eq:distr1} \Pr[Y_k(N_{c+\eps}) = 0 \land \forall i\neq k: Y_i(N_c) = 0]
\to e^{-d_ke^{-(c+\eps)}}\prod_{i\in \slow(\mathsf R)\setminus\{k\}} \hspace{-5mm} e^{-d_ie^{-c}}
\end{equation}
and
\begin{equation}\label{eq:distr2} \Pr[Y_k(N_{c}) = 0 \land \forall i\neq k: Y_i(N_{c+\eps}) = 0]
\to e^{-d_ke^{-c}}\prod_{i\in \slow(\mathsf R)\setminus\{k\}} \hspace{-5mm} e^{-d_ie^{-(c+\eps)}}.
\end{equation}

~\\
\emph{Proof of (c).} We consider $\Tcon'(n) = \ext({\mathsf R})\cdot \Tcon^\mathsf{R}(n)/{n}- \log n$. Let $D$ be a random variable with distribution $\Pr[D \le c] = F(c) := \prod_{k\in \slow(\mathsf{R})}\exp\{-d_ke^{-c}\}$, where $c \in \mathbb{R}$. Then, by (b), $\Tcon'(n)$ converges in distribution to $D$. In the following we will prove that  the sequence $\Tcon'(n)$ is uniformly integrable, i.e.,
\begin{equation}
\label{eq:uniformintegrable}
\limsup_{n\in\NN} \left(\EE\left[|\Tcon'(n)|_{\geq \alpha}\right]\right) \to 0 \qquad \text{as $\alpha \to \infty$,}
\end{equation}
where $X_{\geq \alpha} = X$ if $X \geq \alpha$, and $X_{\geq \alpha} =0$ otherwise.

First we show how \eqref{eq:uniformintegrable} implies the statement of (c). Convergence in distribution together with uniform integrability implies convergence of the means, i.e., $\EE[\Tcon'(n)] \to \EE[D]$ (see, e.g., \cite{BillingsleyBook}). By elementary calculus and the change of variables $u = \sum_{k\in\slow(\mathsf{R})} d_ke^{-c} = e^{c_0-c}$ we get
\begin{align*}
\EE[D] &  = \int_{-\infty}^{\infty} c \left(\prod_{k\in \slow(\mathsf{R})}e^{-d_ke^{-c}}\right)\left(\sum_{k\in\slow(\mathsf{R})} d_k e^{-c}\right) dc \\
& = \int_{\infty}^0 (-c_0+\log u)e^{-u}du
  = c_0 + \gamma,
\end{align*}
where we used the well-known identity $\gamma = -\int_{0}^{\infty}  (\log
u)e^{-u} du$ for the Euler-Mascheroni constant. Thus, $\EE[\Tcon'(n)] =
\gamma+c_0 + o(1)$, and
\[
\EE[\Tcon(n)] = \frac{n}{\ext(\mathsf{R})}\left(\EE[\Tcon'(n)]+\log n\right) = \frac{n\log n+\gamma n +c_0n}{\ext(\mathsf{R})} + o(n).
\]
Thus it suffices to prove~\eqref{eq:uniformintegrable}. We define the following events, where $\delta = 1/3$ (and, as usual, $Y_k(N)$ is the number of vertices in $k$-components of $G_N^\mathsf{R}$):
\begin{enumerate}
\item[(i)] for $N_0 = n\log n/\ext(\mathsf R)$ we have $Y_k(N_0) = 0$ for all $k\in\fast(\mathsf{R})$, and there is only one $\omega$-component in $G_{N_0}^{\mathsf{R}}$,
\item[(ii)]  $e^{c_0}n^{\delta}/2 \leq \sum_{k \in \slow(\mathsf{R})} Y_k(N_\delta)$ and $\sum_{1\leq k\leq K} Y_k(N_\delta) \leq 2e^{c_0}Kn^{\delta}$, and
\item[(iii)] all rounds between $N_{\delta}$ and $N_{\infty}$ are regular.
\end{enumerate}
Then by part (a) of Theorem~\ref{thm:main}, by Lemma~\ref{lemma:phase2}, and by
the properties of $N_\infty$, respectively, the events {\em(i)}, {\em(ii)} and
{\em(iii)} each have probability $1-o(1/\log n)$, where for \emph{(ii)} we
also use $\sum_{k\in\slow(\mathsf{R})} e^{c_k} \leq Ke^{c_0}$. For the proof we also need the following claim, whose justification we postpone to a later point: there exists a constant $\eta>0$ such that
\begin{equation}
  \label{eq:conditionalbound}
  \EE\left[|\Tcon'(n)|_{\geq \alpha}\mid \Tcon'(n)> 2 \lfloor\log\log n \rfloor\right] \leq \eta\log n.
\end{equation}
Our next goal is to give bounds for $\Pr[\Tcon'(n)>c]$ that are uniform in $c$ (in (b) we calculated the limit of this probability only for constant $c$). For every $1\leq k \leq K$ and $N\geq N_\delta$, write $X_k(N)$ for the number of components in $\comp_k(N_\delta)$ that never appear at a $k$-good position between rounds $N_\delta$ and $N$. Furthermore, let $X_{\slow}(N) := \sum_{k\in \slow(\mathsf R)} X_k(N)$. Note that for every $c$ such that $N_0\leq N_c\leq N_\infty$ we have 
\begin{equation}\label{eq:tconbounds}
\Pr[\Tcon'(n)> c] \leq \Pr[X_\slow(N_c) >0] + o(1/\log n),
\end{equation}
since $\Tcon'(n)> c$ implies that at least one of the events $\neg (i)$, $\neg (iii)$ or $X_\slow(N_c)>0$ occurs. Conversely, for every $c$ such that $N_\delta\leq N_c\leq N_\infty$ (note that the range for $c$ in~\eqref{eq:tconbounds} is different) we have
\begin{equation}\label{eq:tconbounds2}
	\Pr[\Tcon'(n)> c] \geq \Pr[X_\slow(N_c) >0] - o(1/\log n),
\end{equation}
since $X_\slow(N_c)>0$ implies that at least one of the events $\neg (iii)$ or $\Tcon'(n)> c$ occurs.

Observe that in every round a $k$-component $C$ fails to appear
at a $k$-good position with probability $(1-k/n)^{\ext_k(\mathsf R)/k}\leq (1-k/n)^{\ext(\mathsf R)/k}$. Therefore, for every $M\in \mathbb N$ and $N\geq N_\delta$,
\[
	\EE\left[X_k(N) \mid |\comp_k(G_{N_\delta}^{\mathsf R})|\le M\right]
	\leq M\cdot\left(1-\frac{k}{n}\right)^{(N-N_\delta)\ext(R)/k}.
\]
Thus, by Markov's inequality and the fact that $(ii)$ occurs with probability $1-o(1/\log n)$, we get from~\eqref{eq:tconbounds} uniformly for $c$ such that $N_0 \le N_c \le N_\infty$
\begin{equation}\label{eq:upperb}
\begin{split}
	\Pr[\Tcon'(n)> c]
	& \leq \Pr[X_\slow(N_c)> 0\mid (ii)]+o\left({1}/{\log n}\right)\\
&\leq 2K^2e^{c_0}n^{\delta}\cdot\max_{k \in \slow(\mathsf{R})}\left(1-\frac{k}{n}\right)^{(N_c-N_\delta)\ext(\mathsf{R})/k} + o\left({1}/{\log n}\right) \\
 & \leq 2K^2e^{c_0-c}+o\left({1}/{\log n}\right).
\end{split}
\end{equation}
On the other hand, a $k$-component, where $k\in\slow(\mathsf{R})$, fails to appear at a $k$-good position with probability $(1-k/n)^{\ext(\mathsf R)/k} \geq 1-\ext(\mathsf R)/n$. Therefore, in every given round $N\geq N_\delta$, the probability that $X_\slow(N)$ decreases is at most ${X_\slow (N)\ext(\mathsf R)}/{n}$, by the union bound. This allows us to couple the number of rounds until $X_\slow(N)$ decreases with geometrically distributed random variables. Indeed, for every $i\leq n/\ext(\mathsf R)$, let $T_i$ be geometrically distributed with mean ${n}/(i\ext(\mathsf R))$, and let $T = \sum_{i=1}^{e^{c_0}n^\delta/(2K)}T_i$. Then, for every $N_\delta \leq N_c \leq N_\infty$ and $x\in\mathbb N$
\[
	\Pr[X_{\slow}(N_c) = 0\mid (ii)]\leq \Pr[T \leq N_c-N_\delta].
\]
It is not difficult to bound $\Pr[T \leq N_c-N_\delta]$. Straightforward calculation show that for a suitable constant $\zeta>0$ we have that $\EE[T] \geq {(n\log n-\zeta n)}/{3\ext(\mathsf R)}$ and $\text{Var}[T] \leq \zeta {n^2}/{\ext(\mathsf R)^2}$. Hence, by \eqref{eq:tconbounds2} and Chebyshev's inequality, and since $(ii)$ occurs with probability $1-o(1/\log n)$, for all $-\delta \log n< c< -\zeta/3$
\begin{equation}
\label{eq:lowerb}
\begin{split}
\Pr[\Tcon'(n)\leq c] & \leq \Pr[X_{\slow}(N_c) = 0] + o(1/\log n)\\
&\leq \Pr\left[T \leq N_c-N_\delta\right] +o(1/\log n)\\
&=  \Pr\left[T \leq \frac{n\log n + 3c n}{3\ext(\mathsf R)}\right]+o(1/\log n) \quad [\text{as } \delta = 1/3]\\
& \leq {9\zeta}{(\zeta+3c)^{-2}} + o(1/\log n).
\end{split}
\end{equation}
We are now ready to complete the proof of \eqref{eq:uniformintegrable}. For $\alpha> 0$, we write
\begin{equation}\label{eq:split} \EE\left[|\Tcon'(n)|_{\geq \alpha}\right] = \EE\left[\Tcon'(n)_{\geq \alpha}\right]
-\EE\left[\Tcon'(n)_{\leq -\alpha}\right].
\end{equation}
We will consider each term separately. For the second term, observe that by the
definition of $\Tcon'(n)$ the inequality $\Tcon'(n)\geq -\log n$ holds. Thus, for $X = (-\Tcon')_{\geq \alpha}$ the general formula $\EE[X] = \int_0^{\infty}\Pr[X \geq c]dc$ simplifies to
\begin{align*}
  -\EE[\Tcon'(n)_{\leq -\alpha}]  
	= \alpha\Pr[\Tcon'(n) \le -\alpha] + \int_{\alpha}^{\log n}\Pr[\Tcon'(n) \le -c]dc.
\end{align*}
Since the integrand is non-increasing (as a function of $c$) we get that
\begin{align*}
  -\EE[\Tcon'(n)_{\leq -\alpha}]  
	\le \alpha\Pr[\Tcon'(n) \le -\alpha] + \sum_{c = \lfloor \alpha \rfloor}^{\lceil \log n \rceil} \Pr[\Tcon'(n) \le -c].
\end{align*}
Then, for all sufficiently large $\alpha > \zeta/3$, \eqref{eq:lowerb} gives
\begin{align*}
  -\EE[\Tcon'(n)_{\leq -\alpha}]  
	\le \frac{10\zeta}\alpha + \sum_{c = \lfloor \alpha \rfloor}^{\lceil \log n \rceil} \frac{9\zeta}{(\zeta-3c)^2} + o(1)
	\le \frac{20\zeta}\alpha + o(1).
\end{align*}
This establishes that
$\limsup_{n\to\infty}-\EE\left[\Tcon'(n)_{\leq -\alpha}\right] \to 0$ as $\alpha \to \infty$. For the first term in~\eqref{eq:split}, by \eqref{eq:conditionalbound} and \eqref{eq:upperb},
\begin{align*}
\EE\left[\Tcon'(n)_{\geq \alpha}\right]
& \leq \sum_{c = \lfloor\alpha\rfloor}^{2\lfloor \log\log n\rfloor } (c + 1)\big(2K^2e^{c_0-c}+o(1/\log n)\big) \\
& \quad + \big(2K^2e^{c_0-2\lfloor \log\log n\rfloor}+o(1/\log n)\big)(\eta+3)\log n.
\end{align*}
The second summand is in $O(e^{-2\log\log n}\log n)+o(1) = o(1)$. Moreover, the first term is a partial sum of a converging series, and becomes arbitrarily small as $\alpha\to\infty$. We obtain
\[ \limsup_{n\to\infty}\EE\left[\Tcon'(n)_{\geq \alpha}\right] \to 0 \qquad \text{ as }\alpha \to \infty. \]
This completes the proof of (c), assuming \eqref{eq:conditionalbound},
and it only remains to prove this auxiliary claim. So let $N\in \mathbb N$, and let $G_0$ be a non-empty graph on $n$ vertices. We will bound the conditional expectation $\EE[|\Tcon'(n)|  \mid G_{N}^{\mathsf{R}}=G_0]$. If $G_0$ is
connected, then $\Tcon(n) \leq N$, so assume otherwise. Fix some $\eps > 0$ with the property $(1-\eps)^{\ell-1} > 1/2$. Let $\mathcal{E}_{\text{giant}}(N')$ be the event that in round $N'$ there is a giant component with $(1-\eps)n$ vertices. Then by Lemma~\ref{lemma:universalfastgiant} there is a constant $\rho>0$ such that uniformly over all $G_0$ we have $\Pr[\mathcal{E}_{\text{giant}}(N_\rho) \mid G_{N}^{\mathsf{R}}=G_0] \geq 1-o(1/n)$, where $N_\rho := N+\rho n$.  

Assuming that $\mathcal{E}_{\text{giant}}(N_\rho)$ occured, let $S$ be the vertex set of the giant. Let $Z(N)$, $N\geq N_\delta$, be the set of vertices in $V\setminus S$ that have no neighbor in $S$ in round $N$. Fix some $v \in Z(N)$. Then the probability that in round $N+1$ the vertex $v$ appears at position $i$ (among the $\ell$ randomly selected vertices), while at all other positions there are vertices of $S$ is at least $1/n \cdot (1-\eps)^{\ell-1} > 1/(2n)$. If $v$ is in a component of size $k\leq K$, then there is a $k$-good position, so with probability at least $1/(2n)$ the vertex $v$ is joined to $S$ by an edge. Similarly, if $v$ is in a component of size larger than $K$, then it is also joined to $S$ with probability at least $1/(2n)$. Since $Z(N_\delta) \leq n/2$, 
\[
\EE[|Z(N_\rho +\Delta)| \mid \mathcal{E}_{\text{giant}}(N_\rho)\text{ and } G_{N}^{\mathsf{R}}=G_0] \leq \frac{n}{2}\left(1-\frac{1}{2n}\right)^{\Delta}
\]
for every $\Delta \in \mathbb N$. In particular, for $\Delta = 4n\log n$ the
right hand side is at most $1/(2n)$. Note that $|Z(N)| = 0$ implies $\Tcon(n)\leq N$. Thus, by Markov's inequality,
\[
\Pr[\Tcon(n) > N_\rho + 4n\log n \mid \mathcal{E}_{\text{giant}}(N_\rho)\text{ and } G_{N}^{\mathsf{R}}=G_0] \leq {1}/{2n}.
\]
Note that $N_\rho + 4n\log n < N+5n\log n$ for sufficiently large $n$.
Therefore, we get for sufficiently large $n$ (but uniformly for all $N$ and
$G_0$):
\begin{align*}
& \Pr\left[\Tcon(n) > N + 5n \log n \mid G_{N}^{\mathsf{R}}=G_0 \right] \\
& \qquad \leq \Pr\left[\neg\mathcal{E}_{\text{giant}}(N_\rho)\mid G_{N}^{\mathsf{R}}=G_0\right] \\
& \qquad \qquad + \Pr\left[\Tcon(n) > N_\rho + 4n\log n \mid \mathcal{E}_{\text{giant}}(N_\rho)\text{ and }G_{N}^{\mathsf{R}}=G_0\right] \\
&\qquad \leq {1}/{n}.
\end{align*}
Applying this bound iteratively, we find that for sufficiently large $n$ we
have for all $N >0$, all graphs $G_0$, and all $i\in\mathbb{N}$,
\begin{equation}
\label{eq:largetimes}
\Pr\left[\Tcon(n) > N + 5i n \log n \mid G_{N}^{\mathsf{R}}=G_0\right] \leq n^{-i}.
\end{equation}
In particular, $\EE\left[\Tcon(n) \mid G_{N}^{\mathsf{R}}=G_0\right] \leq
N+n\log n \cdot O(\sum_{i=0}^{\infty}i\cdot n^{-i}) = N+O(n\log n)$. Recalling
the definition $\Tcon'(n) = {\ext({\mathsf R})\Tcon(n)}/{n} - \log n$,
we get $\EE\left[\Tcon'(n) \mid G_{N}^{\mathsf{R}}=G_0\right] \leq
{\ext({\mathsf R})\cdot N}/{n}+O(\log n)$. On the other hand, $\Tcon'(n)\geq - \log n$ always, since $\Tcon(n)\geq 0$. Summarizing, there exists $\eta' > 0$ such that for all $n\geq 2$, all non-empty graphs $G_0$, and all $N >0$,
\begin{equation}
\label{eq:largetimes2}
\EE\left[|\Tcon'(n)| \mid G_{N}^{\mathsf{R}}=G_0\right] \leq \frac{\ext({\mathsf R})\cdot N}{n}+ \eta' \log n.
\end{equation}
Let us denote by $\cal L$ the event that $\Tcon'(n) > 2\log\log n$, and let $N' = \lceil(n\log n +2n\lfloor\log\log n\rfloor)/\ext(\mathsf{R})\rceil$. Then,
\begin{align*}
	\EE\left[|\Tcon'(n)|_{\geq \alpha}\mid {\cal L}\right]
	& = \sum_{G_0}
		\Pr\left[G_{N'}^{\mathsf{R}} = G_0\mid {\cal L}\right] \cdot \EE\left[|\Tcon'(n)|_{\geq \alpha}\mid G_{N'}^{\mathsf{R}}=G_0 \right] \\
	& \leq \left(\frac{\ext({\mathsf R})\cdot N'}{n}+ \eta' \log n \right) \sum_{G_0}\Pr\left[G_{N'}^{\mathsf{R}} = G_0\mid {\cal L}\right]\\
& = {\ext({\mathsf R})\cdot N'}/{n}+ \eta' \log n,
\end{align*}
and~\eqref{eq:conditionalbound} follows.

\vspace{10pt}
\noindent\emph{Proof of (d).}
For a $C>0$ let $\mathcal{A}=\mathcal{A}(C)$ be the event that 
\begin{itemize}
	\item all rounds from $N_{-C}$ to $N_C$ are regular (where $N_{-C}$ and $N_C$ are defined in the beginning of the proof),
	\item there is only one $\omega$-component in $G_{N_{-C}}^\mathsf{R}$ and
	\item for all $k\in\fast(\mathsf{R})$ we have $Y_k(N_{-C}) = 0$.
\end{itemize} 
Fix $k\in
\slow(\mathsf{R})$ and  $\varepsilon>0$  and define for all $-C\le c\le C$  three events
\[
  \mathcal{E}_1(c) := \mathcal{A}   ~\wedge~ Y_k(N_{c+\varepsilon})=0  ~\wedge ~ Y_k(N_c) > 0 ~\wedge ~  \forall i\in\slow(\mathsf{R})\setminus\{k\}: Y_i(N_c) = 0 ,
\]
and
\[
\begin{aligned}
\mathcal{E}_2(c) &:= &\mathcal{A}&~\wedge~ T_k^\mathsf{R} = \Tcon^\mathsf{R} && \wedge~\Tcon^\mathsf{R}\in(N_c,N_{c+\varepsilon}],\\
\mathcal{E}_3(c) & :=&\mathcal{A}&~\wedge~ Y_k(N_c) > 0 && \wedge~\forall i\in\slow(\mathsf{R}): \ Y_i(N_{c+\varepsilon}) = 0.
\end{aligned}
\]
Since $\cal A$ guarantees that all rounds from $N_{-C}$ to $N_C$ are regular, we have that $\mathcal{E}_1(c)$ implies $\mathcal{E}_2(c)$ which in turn implies $\mathcal{E}_3(c)$. That is, we have
\begin{equation}\label{eq:eventbounds}
  \Pr[\mathcal{E}_1(c)] \leq \Pr[\mathcal{E}_2(c)] \leq \Pr[\mathcal{E}_3(c)]\qquad\text{for all } -C \leq c\leq C-\varepsilon.
\end{equation}
From \eqref{eq:distr1} and \eqref{eq:distr2}, and the fact that $\Pr[\mathcal A] =1-o(1)$, we infer that for all $-C \leq c\leq C-\varepsilon$ we have
\begin{align}
  \lim_{n\to\infty}\Pr[\mathcal{E}_1(c)] &= \left(e^{-d_ke^{-(c+\varepsilon)}}-e^{-d_ke^{-c}}\right)\prod_{i\in\slow(\mathsf{R})\setminus\{k\}}e^{-d_ie^{-c}},\nonumber\\
  \lim_{n\to\infty}\Pr[\mathcal{E}_3(c)] &= \left(e^{-d_ke^{-(c+\varepsilon)}}-e^{-d_ke^{-c}}\right)\prod_{i\in\slow(\mathsf{R})\setminus\{k\}}e^{-d_ie^{-(c+\varepsilon)}}\label{eq:bounds}.
\end{align}
Let $f_d(c) = e^{-de^{-c}}$. Then we infer, with $S_k :=
\sum_{i\in\slow(\mathsf{R})\setminus\{k\}}d_i$, by applying Taylor's theorem
\begin{align*}
  \lim_{n\to\infty}\Pr[\mathcal{E}_1(c)] &= (\varepsilon f'_{d_k}(c) + O(\varepsilon^2))\cdot f_{S_k}(c),\text{~and}\\
  \lim_{n\to\infty}\Pr[\mathcal{E}_3(c)] &= (\varepsilon f'_{d_k}(c) + O(\varepsilon^2))\cdot f_{S_k}(c+\varepsilon).
\end{align*}
Note that since $f_d$ is smooth, and since we will be considering
only values of $f_d$ and its derivatives in a compact interval $[-C,C]$ for a $C>0$ independent of $\varepsilon$, there exists is  universal constant
$C'$ (depending on $C$ only) such that all error terms are in absolute value at most $(C'-1)\varepsilon^2$. Moreover, let $S_{C,\eps}:= \{ j \cdot \varepsilon \mid j\in \N,  -C \leq  j \cdot \varepsilon \leq C-\varepsilon\}$. Since $|S_{C,\eps}|$ is a constant, for sufficiently large $n$ the probabilities $\Pr[\mathcal{E}_1(c)]$ and $\Pr[\mathcal{E}_3(c)]$ are within distance at most $\eps^2$ from their respective limits for all $c\in S_{C,\eps}$. Therefore, together with~\eqref{eq:eventbounds} we obtain that there is $C'>0$ and $n_0\in\mathbb{N}$ such that for all $n \geq n_0$ and all $c\in S_{C,\eps}$ 
\begin{equation}\label{eq:limitineq}
  \varepsilon f'_{d_k}(c) f_{S_k}(c) + C'\varepsilon^2 \leq \Pr[\mathcal{E}_1(c)] \leq  \Pr[\mathcal{E}_3(c)]\leq\varepsilon f'_{d_k}(c) f_{S_k}(c+\varepsilon) + C'\varepsilon^2.
\end{equation}
With those preparations at hand,  let $\mathcal{E}^*$ be the event that $T_k^\mathsf{R} = \Tcon^\mathsf{R}$ and $\Tcon^\mathsf{R} \in (N_{-C},N_{C}]$ and that $\mathcal{A}$ holds.  We may assume that $C$ is a multiple of $\varepsilon$. Then 
\[
  \Pr[\mathcal{E}^*] = \sum_{j = -C/\varepsilon}^{(C-\varepsilon)/\varepsilon}\Pr[\mathcal{E}_2(j\varepsilon)].
\]
For any $\varepsilon'>0$ we have, by choosing $C = C(\varepsilon')$ large enough,
that
\begin{align*}
  & |\Pr[T_k^\mathsf{R} = \Tcon^\mathsf{R}] - \Pr[\mathcal{E}^*]|\\
  &= |\Pr[T_k^\mathsf{R}=\Tcon^\mathsf{R}\land T_k^\mathsf{R}\not\in (N_{-C},N_C]] + \Pr[T_k^\mathsf{R} = \Tcon^\mathsf{R} \land T_k^\mathsf{R}\in(N_{-C},N_C]\land\lnot\mathcal{A}]| \\
  &\leq \Pr[\Tcon^\mathsf{R}\not\in (N_{-C},N_C]] + \Pr[\lnot\mathcal{A}].
\end{align*}
However, the last expression is at most $\varepsilon'$, due to part (b) for $C(\varepsilon')$ and $n = n(\varepsilon')$ large enough. In particular, this derivation, combined with~\eqref{eq:eventbounds}, implies that
\begin{equation*}\label{eq:sandwichpartc}
  \sum_{j = -C/\varepsilon}^{(C-\varepsilon)/\varepsilon}\Pr[\mathcal{E}_1(j\varepsilon)]
  \leq \Pr[T_k^\mathsf{R} = \Tcon^\mathsf{R}] \leq \varepsilon' + \sum_{j =
  -C/\varepsilon}^{(C-\varepsilon)/\varepsilon}
  \Pr[\mathcal{E}_3(j\varepsilon)].
\end{equation*}
Thus~\eqref{eq:limitineq} guarantees that for sufficiently large $n$,
\[
  \left(\sum_{j =  -C/\varepsilon}^{(C-\varepsilon)/\varepsilon}\varepsilon f'_{d_k}(j\varepsilon) f_{S_k}(j\varepsilon)\right) + C'C\varepsilon
  \leq \Pr[T_k^\mathsf{R} = \Tcon^\mathsf{R}]
\]
and
\[ 
  \Pr[T_k^\mathsf{R} = \Tcon^\mathsf{R}]
  \leq \varepsilon' + \left(\sum_{j =  -C/\varepsilon}^{(C-\varepsilon)/\varepsilon}
  \varepsilon f'_{d_k}(j\varepsilon) f_{S_k}((j+1)\varepsilon)\right) + C'C\varepsilon.
\]
Since the statement holds for any choice of $\varepsilon>0$ we have
\[
\int_{-C}^C f_{S_k}(x)\cdot f_{d_k}'(x)dx \leq \lim_{n\to\infty}\Pr[T_k^\mathsf{R} = \Tcon^\mathsf{R}]\leq \varepsilon' + \int_{-C}^C f_{S_k}(x)\cdot f_{d_k}'(x)dx.
\]
Again this statement holds for any choice of $\varepsilon'>0$ if $C = C(\varepsilon')$ is large enough. Hence,
\[
  \lim_{n\to\infty}\Pr[T_k^\mathsf{R} = \Tcon^\mathsf{R}] = \lim_{C\to\infty}\int_{-C}^C f_{S_k}(x)\cdot f_{d_k}'(x)dx.
\]
Setting $S = S_k +d_k=\sum_{i\in\slow(\mathsf R)} d_i$, the integral can be computed as follows:
\begin{align*}
  \lim_{C\to\infty}\int_{-C}^C f_{S_k}(x) f_{d_k}'(x)dx &= \lim_{C\to\infty}\int_{-C}^C e^{-S_ke^{-x}}\cdot e^{-d_ke^{-x}}\cdot d_ke^{-x} dx \\
  &= \lim_{C\to\infty}d_k\int_{-C}^C e^{-S e^{-x} - x}dx \\
  & = \lim_{C\to\infty}\frac{d_k}{S}\left( e^{-S e^{-C}} - e^{-S e^{C}} \right)
  = \frac{d_k}{S},
\end{align*}
as claimed.
\end{proof}


\section{Degenerate rules}\label{sec:degenerate}
In this section we will discuss lower bounds for degenerate rules and we will prove Theorem~\ref{thm:degenerate}.  As an auxiliary result we need the following lemma implying that the typical behavior of the process is to end up with fast components gone and only a constant number of slow components left. For brevity we will denote $\fast(\mathsf{R})$ and $\slow(\mathsf{R})$ by $\fast$ and $\slow$, respectively. Moreover, we will denote $\sum_{k\in\fast}Y_k(N)$ by $Y_\fast(N)$, $\sum_{k\in\fast}z_k(t)$ by $z_\fast(t)$, $\sum_{k\in\slow}Y_k(N)$ by $Y_\slow(N)$ and $\sum_{k\in\slow}z_k(t)$ by $z_\slow(t)$. Recall that a component is \emph{small} if it has size $\leq K$.
\begin{lemma}
\label{lem:degenerate1}
Let $K, \ell\in \mathbb{N}$ and let $\mathsf{R}$ be a degenerate $(K,\ell)$-rule. Then for every $\eps >0$ there is a $C> 0$ and $t_0>0$ such that whp $Y_\fast(tn) < \eps Y_\slow(tn) +C\log n$ for all $t_0 < t < n$.
\end{lemma}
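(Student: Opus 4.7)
The plan is to combine an initial setup via the differential equation method with a drift-and-concentration argument over the long tail $N \in (Tn, n^2]$. Fix a small $\delta = \delta(\eps, K, \ell) > 0$, to be chosen later. By Lemma~\ref{lemma:universalfastgiant} applied with parameter $\delta$, there is a constant $C_\delta$ such that whp $G_{C_\delta n}^\mathsf{R}$ has a component of size at least $(1-\delta) n$, which must be an $\omega$-component; equivalently, $Y_{\text{small}}(C_\delta n) \leq \delta n$, where $Y_{\text{small}}(N) := Y_\fast(N) + Y_\slow(N)$. Separately, by Lemma~\ref{lem:limits}(a) I may fix $T_\eps$ with $z_\fast(T_\eps) \leq (\eps/4) z_\slow(T_\eps)$, and Corollary~\ref{cor:linearregime} at time $T_\eps n$ gives whp $Y_\fast(T_\eps n) \leq (\eps/3) Y_\slow(T_\eps n)$. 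Setting $T := \max(C_\delta, T_\eps)$, both bounds hold at round $Tn$.

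The key structural observation for the post-$Tn$ regime is that $Y_{\text{small}}(N)$ is monotonically nonincreasing: every edge either leaves $Y_{\text{small}}$ unchanged (internal edges, $(\omega,\omega)$-mergers, or small-small mergers of total size $\leq K$) or strictly decreases it (a small absorbed into $\omega$, or a small-small merger whose sum exceeds $K$). Hence $Y_{\text{small}}(N) \leq \delta n$ for all $N \geq Tn$. Counting single-involvement destruction events of the form $(k, \omega, \ldots, \omega)$ with $k \in \fast$ (each destroying the corresponding fast component with probability $(\ext_k/n)(Y_\omega/n)^{\ell-1} \geq (1/n)(1-Y_{\text{small}}/n)^{\ell-1}$, since $\ext_k \geq 1$ on $\fast$) and bounding every event involving at least two small vertices by $O(Y_{\text{small}}^2/n^2)$ gives
\[
\EE[\Delta Y_\fast \mid G_N] \leq -\frac{Y_\fast}{n}\Bigl(1 - \frac{Y_{\text{small}}}{n}\Bigr)^{\ell-1} + c_1 \frac{Y_{\text{small}}^2}{n^2},
\quad
\bigl|\EE[\Delta Y_\slow \mid G_N]\bigr| \leq c_2 \frac{Y_{\text{small}}^2}{n^2},
\]
where the second estimate uses that $\ext_k = 0$ for every $k \in \slow$ forces every change in $Y_\slow$ to originate from a multi-involvement event. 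In the regime $Y_\fast(N) > \eps Y_\slow(N)$ one has $Y_{\text{small}}(N) \leq (1 + \eps^{-1}) Y_\fast(N)$, so choosing $\delta$ small enough (in terms of $\eps$, $K$, $\ell$) combines the two estimates to give the clean drift
\[
\EE[\Delta Y_\fast \mid G_N] \leq -\frac{Y_\fast(N)}{4n}
\quad \text{whenever } Y_\fast(N) > \eps Y_\slow(N) \text{ and } N \geq Tn.
\]

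To upgrade this drift into a trajectory-wide bound, I would study the process $Z_N := \max(Y_\fast(N) - \eps Y_\slow(N),\,0)$, which has per-step differences of size $O(1)$ and, by the above, negative conditional drift of order $Z_N/n$ whenever $Z_N > 0$. Applying a Freedman-type inequality on $Z_N$ in $\Theta(n)$-sized sliding windows, re-initialized at the start of each window from the current value of $Z_N$, and union-bounding over the $O(n^2)$ rounds in $[Tn, n^2]$, yields
\[
\Pr\bigl[\exists N \in [Tn, n^2] : Y_\fast(N) > \eps Y_\slow(N) + C \log n\bigr] = o(1)
\]
for any $C = C(\eps, K, \ell)$ sufficiently large.

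The main obstacle is this last step: the drift $\Theta(Y_\fast/n)$ becomes of the same order as the $O(1)$ per-step fluctuations once $Y_\fast$ drops to the $O(\log n)$ scale, so a single martingale inequality applied across the full $[Tn, n^2]$ interval is too weak. The windowing scheme, together with the drift estimate being preserved throughout by the monotonicity of $Y_{\text{small}}$ and the nonnegativity of $Y_\fast$, is what keeps the overall failure probability at $o(1)$; the additive $C \log n$ slack absorbs the in-window fluctuations.
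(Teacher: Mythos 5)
Your setup and structural estimates are essentially those of the paper's proof: the same base case (Lemma~\ref{lem:limits}(a) together with Corollary~\ref{cor:linearregime} and Lemma~\ref{lemma:universalfastgiant}), the same monotonicity of the number of vertices in small components, and the same dichotomy that slow components can only change in rounds in which at least two of the $\ell$ selected vertices lie in small components (probability $O(Y_{\text{small}}^2/n^2)$), while each fast component is absorbed at rate at least $\tfrac1n(1-Y_{\text{small}}/n)^{\ell-1}$ since $\ext_k(\mathsf R)\ge 1$ for $k\in\fast(\mathsf R)$. The paper converts these facts into an induction over length-$n$ blocks: in each block it Chernoff-bounds the number $I$ of rounds with two small vertices and the number of absorption events, distinguishing whether $Y_\fast$ does or does not dip below $\eps Y_\slow/2$ inside the block, and thereby maintains the multiplicative bound $Y_\fast\le \eps Y_\slow$ with failure probability $o(1/n)$ per block as long as $Y_\slow\ge 64K\eps^{-1}\log n$; once the small-vertex count is $O(\log n)$, monotonicity makes the claim trivial, which is where the additive $C\log n$ comes from. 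So in substance you are on the paper's route; the difference is only the bookkeeping (one truncated difference process $Z_N=\max(Y_\fast-\eps Y_\slow,0)$ versus separate per-block upper and lower bounds).

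The genuine problem is your final concentration step as stated. You propose Freedman on $Z_N$ using ``increments $O(1)$ and negative drift of order $Z_N/n$'' over $\Theta(n)$-windows. Consider the regime where $Y_\slow$ (hence $Y_\fast\approx\eps Y_\slow$) is still of order $\delta n$ while $Z_N=O(\log n)$: there the predictable variance of $Z$ over a window of length $n$ is of order $Y_\fast$ (in expectation $\Theta(Y_\fast)$ rounds move $Z$ by $\Theta(1)$), so the fluctuation allowance is of order $\sqrt{Y_\fast}\gg\log n$, whereas a drift of $-\Theta(Z_N/n)$ per step contributes only $-\Theta(\log n)$ per window; with these inputs Freedman cannot yield an $O(\log n)$ bound (and with the crude variance bound $O(n)$ it is weaker still). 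What rescues the argument --- and what your own displayed estimates already contain but your last paragraph discards --- is that the drift is $-\Omega(Y_\fast/n)$ per step, i.e.\ of the same order as the variance: pointwise, downward moves of $Z$ occur at rate $\Omega(Y_\fast/n)$ while upward moves occur only at rate $O\bigl(\delta(1+\eps^{-1})Y_\fast/n\bigr)$. You therefore need either an excursion/exponential-supermartingale argument built on this comparison of intensities (together with a treatment of the boundary $Z_N=0$, where the drift need not be negative), or the paper's per-block two-case counting argument, which is precisely a discretized form of this comparison, with per-block failure $o(1/n)$ and a union bound over the at most $n$ blocks. With that replacement your proof goes through; as written, the windowed Freedman step fails.
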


\begin{proof}
Let $\eps>0$. Since the statement becomes stronger for smaller $\eps > 0$ we may assume $\eps < (32\ell^2K)^{-1}$ and $(1-\eps^2)^{\ell-1} \geq 1/2$. By Lemma~\ref{lem:limits} and Lemma~\ref{lemma:universalfastgiant} there is $t_0>0$ such that $z_\fast(t) < \eps/2 \cdot z_\slow(t)$ and $z_\fast(t) + z_\slow(t) < \eps^2/2$ for all $t\in [t_0,t_0+1]$, and by
Corollary~\ref{cor:linearregime}, whp
\begin{equation}
\label{eqdeg:induction1}
Y_\fast(tn) < \eps\cdot Y_\slow(tn)\qquad \text{and} \qquad Y_\fast(tn) +Y_\slow(tn) < \eps^2 n
\end{equation}
for all $t\in[t_0,t_0+1]$.

We show by induction on $t$ that \eqref{eqdeg:induction1} holds whp 
for all $t'\in [t_0,t]$ as long as $t<n$ and $Y_\slow(tn) \geq 64K/\eps \cdot
\log n$. More precisely, we show
that if \eqref{eqdeg:induction1} holds for some $t\geq t_0$, and if
$Y_\slow(tn) \geq 64K/\eps\cdot\log n$, then with probability $1-o(1/n)$ it
also holds for $t+1$. The idea of the inductive step is similar as in the proof for Lemma~\ref{lem:limits} (a), but here we need to work with $Y_k$ instead of $z_k$, which forces us to split the proof into small steps.

Since $Y_\fast(N)+Y_\slow(N)$ is non-increasing we only need to show the first inequality of~\eqref{eqdeg:induction1}. Note that if $Y_\slow(tn) +Y_\fast(tn) < C \log n$ for some $C>0$ then the statement is trivial. So assume that \eqref{eqdeg:induction1} holds for some $t\geq t_0$, and that $Y_\slow(tn) \geq 64K/\eps\cdot\log n$. We first give a lower bound for $Y_\slow((t+1)n)$. As $\mathsf{R}$ is degenerate small components can only be removed in rounds for which at least two of the $\ell$ vertices belong to small components. Let $I$ denote the number of such rounds in the interval $(tn,(t+1)n]$.  The probability that a single round contains at least two vertices in small components is at most
 \begin{equation}
\label{eq:definitionpsmall}
 \binom{\ell}{2}\left(\frac{Y_\slow(tn) + Y_\fast(tn)}{n}\right)^2 \leq \ell^2 \frac{Y_\slow(tn)^2}{n^2}.
\end{equation}
Since the number of small components is non-increasing, using the induction assumption \eqref{eqdeg:induction1} for $tn$ guarantees that 
\begin{equation}
\label{eqdeg:numsmall}
\EE[I] \leq n\cdot \ell^2 Y_\slow(tn)^2/n^2  \leq
\ell^2 \eps^2 Y_{\slow}(tn) < \eps Y_{\slow}(tn)/(32K).
\end{equation}
Note that the right hand side
is $> 2\log n$ by our assumption on $Y_\slow(tn)$. By the Chernoff
bounds, with probability $1-o(1/n)$ the actual number of non-regular rounds is at most $I \leq \eps Y_{\slow}(tn)/(16K)$. Since in each
round at most two slow components can be merged, each having at most $K$ vertices, with probability $1-o(1/n)$
we have with room to spare
\begin{equation}
\label{eqdeg:lowerbound}
Y_\slow((t+1)n) \geq Y_\slow(tn) - 2K\cdot I
> \frac{15}{16}\cdot Y_\slow(tn).
\end{equation}
Next we derive an upper bound for the fast components. A new fast component can only be created  by merging two small components. Hence, the number of vertices in fast components that are created between time $t$ and $t+1$ is at most $K\cdot I$. To use this fact, we distinguish two cases. First assume that there exists a round $N \in (tn, (t+1)n]$ such that $Y_\fast(N) \leq \eps Y_\slow(tn)/2$. In this case, we can directly bound
\begin{equation}
\label{eqdeg:upperbound1}
Y_\fast((t+1)n) \leq Y_\fast(N)+K\cdot I
< \frac{15}{16}\cdot \eps Y_\slow(tn),
\end{equation}
where we used the bound $I \leq \eps Y_{\slow}(tn)/(16K)$, which holds with probability $1-o(1/n)$.

Now let us turn to the the second case. So assume that for all $N \in (tn, (t+\delta)n]$ we have $Y_\fast(N) \geq \eps Y_\slow(tn)/2$. Recall that for each $k\in \fast$ we have $\ext_k(\mathsf{R}) > 0$.  In other words, for each fast component $C$ there exists at least one ``good'' position so that if $C$ appears in this position (and all other positions are filled with vertices from  $\omega$-components) then $C$ is merged with an $\omega$-component. Therefore, the probability that in a fixed round $N$ a fast component is merged with an $\omega$-component is at least
\[ 
 \frac{Y_{\fast}(N)}{n}\left(1-\eps^2\right)^{\ell-1} \geq \frac{\eps Y_\slow(tn)}{4n}.
\]
Again we apply the Chernoff bounds and use that the right hand side is $> 2\log n /n$. Thus, with probability $1-o(1/n)$ the number $Z$ of fast components that are merged with an $\omega$-component between time $t$ and $t+1$ is at least $Z \geq \eps Y_\slow(tn)/8$.
Therefore, with probability at least $1-o(1/n)$,
\begin{align}
\label{eqdeg:upperbound2}
 \nonumber Y_\fast((t+1)n) & \leq Y_\fast(tn) - Z + K\cdot I \\ & \leq \eps Y_\slow(tn) - \frac{\eps Y_\slow(tn)}{8} + \frac{\eps Y_\slow(tn)}{16}  \leq \frac{15}{16}\cdot \eps Y_\slow(tn),
\end{align}
so we get the same bound as in the first case, cf.\ \eqref{eqdeg:upperbound1}.

In either case, together with~\eqref{eqdeg:lowerbound} the inductive conclusion follows since with probability $1-o(1/n)$,
\[
\eps Y_\slow((t+1)n) \geq \frac{15}{16}\eps Y_\slow(tn) \geq Y_\fast((t+\delta)n).
\]
This concludes the induction step and the proof of the lemma.
\end{proof}

\begin{proof}[Proof of Theorem~{\ref{thm:degenerate}}]
Choose any $0<\eps<1$, and let $C,t_0>0$ be as in Lemma~\ref{lem:degenerate1}.
From Lemma~\ref{lem:propdiffeq} and Corollary~\ref{cor:linearregime} we know
that at time $t_0$ there is a linear number of slow components. We distinguish
two cases. First, if the number of vertices in small components is larger than $Y_0 := (C+1+\eps) \cdot \log n$ after $n^2$ rounds, then there is nothing to show. Secondly, suppose that the number drops below $Y_0$ at some round $N_0$. Then Lemma~\ref{lem:degenerate1} implies $Y_\slow(N_0) \geq \log n$, while $Y_\fast(N_0) \leq Y_0 = O(\log n)$. We wait a bit further until in some round $N_1$ the number of slow
components has dropped to $1$ or $2$. (In each round, it can decrease by at
most $2$.) Then the number of fast components is still at most $Y_0$.

Now we wait for $\Delta := n^{3/2}$ further rounds. The probability that in a fixed round at least two vertices in small components are chosen is in $O((Y_0/n)^2) = O(\log^2 n/n^2)$. Thus, the probability that there exists a round between $N_1$ and $N_1+\Delta$ in which two vertices in small components are chosen is in $O(\log^2 n/n^2 \cdot \Delta) = o(1)$. In particular, whp the set of slow components remains unchanged.

On the other hand, in each round the probability that a particular fast component is merged with an $\omega$-component is $\Omega(1/n)$. Thus, the expected number of fast components that will remain after $\Delta$ rounds is $(1-\Omega(1/n))^{\Delta} = o(1)$. By Markov's inequality, the probability that there are no fast components left is $1-o(1)$. Thus whp after $N_1+\Delta$ rounds there is no fast component left, and only one or two slow components. Then we only merge the slow components if at least two of their vertices are selected in some round. The expected time until this happens is $\Omega(n^2)$, which proves the theorem.
\end{proof}


\section{Examples and Applications}

In this section we give some examples that illustrate Theorem~\ref{thm:main}. In Section~\ref{ssec:LEX} we describe a rule that is asymptotically fastest to connect the graph among all Achlioptas processes.

\subsection{The BF-Process}
\label{sec:BohmanFrieze}

We consider the Bohman-Frieze process~\cite{bohman2001avoiding}. In each round
we are given two edges (so $\ell=4$), and choose according to the following
rule. If the first edge connects two isolated vertices, then it is added to the
graph. Otherwise, we choose the second edge. Denote the choice rule of the
Bohman-Frieze process by $\mathsf{BF}$. From the definition it follows
immediately that $\mathsf{BF}$ is a (1,4)-rule. By using $*$ as a placeholder
for either $1$ or $\omega$ we have that the $\mathsf{BF}$ rule maps component
size vectors of the form $(1,1,*,*)$ to $1$ and all other vectors to $2$.
Components can be combined in three different ways given by
\begin{align*}
  C_{1,1} &= \{(1,1,*,*),(\omega,*,1,1),(1,\omega,1,1)\},\\
  C_{1,\omega} &= \{(\omega,*,1,\omega),(\omega,*,\omega,1),(1,\omega,1,\omega),(1,\omega,\omega,1)\}\text{~and}\\
  C_{\omega,\omega} &= \{(*,\omega,\omega,\omega),(\omega,1,\omega,\omega)\}.
\end{align*}
The extinction rate for 1-components is
$$\ext(\mathsf{BF}) = \ext_1(\mathsf{BF}) =
1\cdot|\{(\omega,\omega,\omega,1),(\omega,\omega,1,\omega)\}| = 2.$$
Since $\ext_1(\mathsf{BF}) < 2K+2 = 4$, Theorem~\ref{thm:main} is applicable. As $K=1$ we have $z_1+z_\omega = 1$, so we can express the functions $f_k$ in the differential equations~\eqref{eq:diffeq3} in terms of $z_1$ only. By writing $z$ instead of $z_1$ they are given by
\[
z' = -2z- 2z^2+2z^3 \qquad \text{ and } \qquad z_{\omega}' = 2z+ 2z^2-2z^3.
\]
By integrating we get
\begin{align*}
-2T & = \int_{0}^{T}\frac{z'(t)}{z(t)+z(t)^2-z(t)^3}dt \stackrel{x = z(t)}{=}
\int_{1}^{z(T)}\frac{1}{x+x^2-x^3}dx \\
&= \left[\log x - \frac{5-\sqrt{5}}{10}\log (1+\sqrt 5 -2x)-
  \frac{5+\sqrt{5}}{10}\log (-1+\sqrt 5 +2x)\right]_{x=1}^{z(T)}.
\end{align*}
For $T \to \infty$ we know $z(T)\to 0$, so $2T+\log(z(T))$ converges to
\[
c_1
= \frac{\log(5-\sqrt{5})-\log(5+\sqrt{5})}{\sqrt{5}}
= \frac{-2 \log \varphi}{\sqrt{5}}
= -0.43040894\dots ~,
\]
where $\varphi = (1+\sqrt{5})/2$ is the golden ratio. Hence, by Theorem~\ref{thm:main}, since $c_0 = c_1$, the expected time until the graph is connected is
\[
\EE[\Tcon^\mathsf{BF}] = \frac{n\log n+\gamma n + c_1n}{\ext(\mathsf{BF})} + o(n) = \frac{n\log n+0.1468067\dots  n}{2} + o(n),
\]
and for all $c\in \RR$, since $d_1 = e^{c_1} = 0.6502431\dots$,
\[
  \lim_{n\to\infty}\Pr\left[\Tcon^\mathsf{BF} \leq \frac{n\log n +cn}{2}\right] =  e^{-d_1e^{-c}} = e^{-0.6502431\dots e^{-c}}.
\]

\subsection{The KP Process}
\label{ssec:KP}

In this section we study the $\KP$ process \cite{ar:kpXX}. The process starts with the empty graph. In each
round two edges are given (so, again, $\ell = 4$) and we choose the first one
if and only if at least one of its endpoints is an isolated vertex. Then $K=1$ and
$\ext(\KP) = 4$, as
\begin{align*}
  \ext_1(\KP) &= 1\cdot |\{(1,\omega,\omega,\omega),(\omega,1,\omega,\omega),(\omega,\omega,1,\omega),(\omega,\omega,\omega,1)\}| = 4.
\end{align*}
Moreover, $\slow(\KP) = \{1\}$. Note that Theorem~\ref{thm:main} is not directly applicable, since $\ext(\KP) \ge 2K+2$. However, we can study instead a (2,4)-rule $\KP'$ such that $G_N^{\KP} = G_N^{\KP'}$ with probability 1, as described in Remark~\ref{rem:extension}; $\KP'$ makes exactly the same choice as $\KP$ when presented the component sizes of four randomly selected vertices. Then we have $\ext_1(\KP) = \ext_1(\KP') = 4$ and
\[
  \ext_2(\KP') = 2\cdot |\{(\omega,\omega,2,\omega),(\omega,\omega,\omega,2)\}| = 4,
\]
and thus $\ext(\KP') = 4$ and $\slow(\KP') = \{1,2\}$.
Since $\ext(\KP') < 2\cdot 2 + 2$ Theorem~\ref{thm:main} is applicable to $\KP'$. As before, if we use $*$ as a placeholder for $1, 2$ and $\omega$, from the definition it follows that $\KP'$ maps component size vectors of the form
$(1,*,*,*)$, $(2,1,*,*)$ and $(\omega,1,*,*)$ to $1$ and all other component size vectors to $2$. Additionally, to describe $C_{\mu,\nu}$, let $\bullet$ be a placeholder for $2$ or $\omega$. Then, for $\KP'$
\begin{align*}
  C_{1,1} &= \{(1,1,*,*),(\bullet,\bullet,1,1)\}, \\
  C_{1,2} &= \{(1,2,*,*),(2,1,*,*),(\bullet,\bullet,1,2),(\bullet,\bullet,2,1)\}, \\
  C_{1,\omega} &= \{(1,\omega,*,*),(\omega,1,*,*),(\bullet,\bullet,1,\omega),(\bullet,\bullet,\omega,1)\}, \\
  C_{2,2} &= \{(\bullet,\bullet,2,2)\}, \\
  C_{2,\omega} &= \{(\bullet,\bullet,2,\omega),(\bullet,\bullet,\omega,2)\}\text{~and} \\
  C_{\omega,\omega} &= \{(\bullet,\bullet,\omega,\omega)\}.
\end{align*}
Recall that $z_1+z_2+z_\omega \equiv 1$. We can express $f_k$ in \eqref{eq:diffeq3} for $k\in\{1,2,\omega\}$ in terms of $z_1$ and $z_2$ only. The differential equations  are given by
\begin{align*}
z_1' =& -4z_1 + 4z_1^2-2z_1^3, \\
z_2' =& 2 z_1^4-4 z_1^3-4 z_1^2 z_2+4 z_1^2+4 z_1 z_2-4 z_2.
\end{align*}
Like in the Bohman-Frieze process we get for $z_1$
\begin{equation}
\label{eq:z1}
\begin{split}
-4T & = \int_{0}^{T}\frac{z_1'(t)}{z_1(t)-z_1(t)^2+z_1(t)^3/2}dt \stackrel{x = z_1(t)}{=} \int_{1}^{z_1(T)}\frac{dx}{x-x^2+x^3/2} \\ &= \left[-\frac{1}{2} \log \left(x^2-2 x+2\right)+\log (x)-\tan ^{-1}(1-x))\right]_{x=1}^{z_1(T)}.
\end{split}
\end{equation}
For $T \to \infty$ we know $z_1(T)\to 0$, so the expression $4T+\log(z_1(T))$ converges to 
\[
c_1 = (\pi +\log (4))/4 = 1.13197..\;.
\]
We can also compute the value of $c_2$. Note that the differential equation for $z_2$ is linear and we can rewrite it to
\[
	z_2' = f + g z_2,
	\quad \text{where} \quad
	f = 2z_1^4 - 4z_1^3 + 4z_1^2
	~~\text{and}~~
	g = -4z_1^2 + 4z_1 - 4.
\]
Thus, we can solve explicitly for $z_2$ in terms of $z_1$, and since $z_2(0)=0$
\begin{equation}
\label{eq:expz2} 
\begin{split}
	z_2(T)
	& = 
	\exp\left\{\int_0^T g(t) dt\right\}
		\cdot \int_0^T f(t)\exp\Big\{-\int_0^t g(y) dy\Big\} dt\\
	& = 	\exp\left\{-4T + 4\int_0^T (z_1(t) - z_1^2(t))  dt\right\} \\
	& \qquad\qquad\qquad	\cdot \int_0^T f(t)\exp\Big\{4t + 4\int_0^t(z_1(y)^2 - z_1(y)) dy\Big\} dt.
\end{split}
\end{equation}
In order to simplify this expression, note that
\[
	\int_0^T z_1(t) dt
	\stackrel{(x = z_1(t))}= 
	\int_1^{z_1(T)} \frac{x}{-4x + 4x^2 - 2x^3}dx
	= \frac12 \tan^{-1}(1 - z_1(T)),
\]
and the same change of variables yields
\[
\begin{split}
	\int_0^T z_1(t)^2 dt
	&= 
	\int_1^{z_1(T)} \frac{x^2}{-4x + 4x^2 - 2x^3}dx \\
	&= -\frac14 \log\left(z_1(T)^2 - 2z_1(T) + 2\right) + \frac12 \tan^{-1}(1 - z_1(T)).
\end{split}
\]
By plugging this into~\eqref{eq:expz2} and using that $z_1(T) \to 0$ we infer that as $T\to\infty$
\[
	4T + \log z_2(T) \to \log(2) + \log\int_0^\infty f(t)\exp\big\{4t-\log\big(z_1(t)^2 - 2z_1(t) + 2\big)\big\}dt .
\]
Using once more the change of variables $x = z_1(t)$ and~\eqref{eq:z1} we infer that
\[
  c_2 = \log(2) + \log\left[\int_0^1 \frac{e^{\tan^{-1}(1-x)}}{\sqrt{x^2-2x+2}}dx\right].
\]
The last integral can be approximated numerically and we get $c_2 = 1.008..$. Hence, by Theorem~\ref{thm:main}, for all $c\in \RR$, since $d_1 = e^{c_1} = 3.1017\dots$ and $d_2 = e^{c_2}/2 = 1.3700\dots$,
\[
	\lim_{n\to\infty}\Pr\left[\Tcon^\KP \leq \frac{n\log n +cn}{4}\right]
	= e^{-(d_1+d_2)e^{-c}} = e^{-4.47.. e^{-c}},
\]
and moreover,
\[
	\EE\left[\Tcon^{\KP}\right]
	= \frac{n\log n+\gamma n +\log(d_1+d_2)n}{4} + o(n) = \frac{n\log n+2.075.. n}{4} + o(n).
\]
Finally, we obtain that with some probability that is bounded away from zero and from one, the graph gets connected when the last isolated vertex/isolated edge dissapears. More precisely,
\[
  \lim_{n\to\infty}\Pr\left[T_1^{\KP} = \Tcon^{\KP}\right] = 0.693\dots \quad\text{and}\quad\lim_{n\to\infty}\Pr\left[T_2^{\KP} = \Tcon^{\KP}\right] = 0.306\dots~.
\]
Note that the same statements are also true for $\KP'$. 

\subsection{The Lexicographic Method}
\label{ssec:LEX}

Fix some even $\ell\geq 0$, and let $K\geq \ell/2$. We consider the $(K,\ell)$-rule $\Rlex$ that greedily takes smallest components first.  More precisely, we begin with mapping the component size vector to one in which the component sizes of the endpoints  are ordered, i.e.\ 
\begin{align*}
(s_1,\ldots,s_{\ell}) & \mapsto (s_1',\ldots,s_{\ell}') \\
& := \left(\min\{s_1,s_2\},\max\{s_1,s_2\}, \ldots, \min\{s_{\ell-1},s_{\ell}\},\max\{s_{\ell-1},s_{\ell}\}\right).
\end{align*}
We then choose that $i\in\{1,\ell/2\}$ for which $(s_{2i-1}',s_{2i}')$ is minimal with respect to the lexicographical ordering. In case of ties we choose the smallest eligible $i$. As an example consider $\ell=4$ and $K=2$. In this case we almost get the
$\mathsf{KP}$-rule, except that we choose the second edge (instead of the
first) for the two vectors $(1,\omega,1,1)$ and $(\omega,1,1,1)$.

Note that for all component size vectors of the form $(\omega,\ldots,\omega,k,\omega,\ldots,\omega)$, where $k\in[K]$, $\Rlex$ selects the edge with the  component of size $k$, thus we have that $\ext(\Rlex)=\ell$ and $\slow(\Rlex) = \{1\}$. Note that the condition on $K$ ensures that we may apply Theorem~\ref{thm:main}. 

We abbreviate again $z:= z_1$. Then by using $\sum_{k\in S_K} z_k=1$ we will be able to express the differential equation~\eqref{eq:diffeq3} for $z$ without reference to the other functions $z_k$, $k\in S_K$. The differential equation for $z$ is given by $\frac{dz}{dt} = -f_1^{-}(z)$. Recall that $P_{1,1}$ corresponds to the probability of adding an edge that joins two isolated vertices conditioned on the fraction of isolated vertices being $z$. Similarly, $\sum_{\mu\in S_k\setminus\{1\}}
P_{\mu,1}$ corresponds to the probability to add an edge that joins an isolated vertex to a component with at least two vertices. Thus,
\begin{align*}
  z_1' &= -f_1^{-}(z) = -2P_{1,1} - \sum_{\mu\in S_k\setminus\{1\}} P_{\mu,1}\\
  &= -2\left(1-(1-z^2)^{\ell/2}\right) - \left(1 - (1-z)^{\ell} - \left(1-(1-z^2)^{\ell/2}\right)\right)\\
  &= -2 + \left(1-z^2\right)^{\ell/2} + (1-z)^\ell.
\end{align*}
By the same means as in the Bohman-Frieze process, an explicit expression for
$c_1 = \lim_{t\to \infty}(\ell\cdot t+\log z(t))$ is given by
\[
  \lim_{z\to 0}\left(\log z + \ell\int_{1}^z \frac{1}{-2+(1-x^2)^{\ell/2}+(1-x)^\ell}dx\right).
\]
In general the integral can be expressed as a rational function in the roots of
the polynomial $f_1^{-}$ and their logarithms. Since the rational functions give little insight even for small values of $\ell$, we only give a table with the numerical values.
\[
\begin{array}{l||c|c|c|c|c|c|c|c}
\ell       & 2  & 4              & 6               & 8               & 10            & 12           & 14              & 16   \\
\hline
c_1 &  0  & 0.935.. & 1.910.. & 2.905.. & 3.912.. & 4.927.. & 5.948..  & 6.972..  \\
\hline
d_1 & 1 & 2.549.. & 6.756.. & 18.275.. & 50.03.. & 138.07.. & 383.0.. & 1.06..\cdot 10^3
\end{array}
\]
Mind that the table only gives second order terms. Since the dominating term of $\EE[\Tcon^\lex]$ is $n\log n/\ell$, the lexicographic rules become faster to connect the graph as $\ell$ increases. 

The main reason for studying this class of rules is that they provide a lower bound for $\Tcon^\mathsf{A}$ for \emph{any} $\ell$-Achlioptas process $\mathsf{A}$, not just for $(K,\ell)$-rules, in the following sense. Consider any $\ell$-Achlioptas process $\mathsf{A}$, i.e., a process in which $\ell$ vertices are drawn uniformly at random, and then any strategy may be used to choose between the $\ell/2$  edges.
We claim that for every $N\geq 0$ the number of isolated vertices after $N$ rounds of $\mathsf{A}$ stochastically dominates the number of isolated vertices after $N$ rounds of $\Rlex$. Formally, if $Y_{1}^{\text{lex}}(N)$ and $Y_{1}^{\mathsf{A}}(N)$ denote the number of isolated vertices after $N$ rounds
of $\Rlex$ and of $\mathsf{A}$, respectively, then for every $N\geq 0$ and every $\mu\in \NN_0$,
\begin{equation}
\label{eq:domination}
\Pr[Y_{1}^{\text{lex}}(N) \leq \mu] \geq \Pr[Y_{1}^{\mathsf{A}}(N) \leq \mu]. 
\end{equation}
In order to show~\eqref{eq:domination} let $I_N^{\lex}$ and $I_N^{\mathsf{A}}$ denote the sets of isolated vertices in $G_N^{\lex}$ and $G_N^{\mathsf{A}}$, respectively. We will show by induction on $N$ that it is possible to couple $G_N^{\lex}$ and $G_N^{\mathsf{A}}$ such that there is a permutation $\pi_N$ of the vertex set  with the property that $I_N^\lex \subseteq \pi_N(I_N^\mathsf{A}) $; this immediately establishes~\eqref{eq:domination}. 

The claim is trivial for $N = 0$ (with $\pi_0$ being the identity map). For the induction step, let $N \in \mathbb{N}$ and let $\pi_N$ be a permutation with the required property. Let $v_1, \dots, v_\ell$ be the $\ell$ random vertices selected at the beginning of round $N+1$. Then we create $G_{N+1}^\mathsf{A}$ as usual, i.e., by adding to $G_{N}^\mathsf{A}$ the edge that we choose according to $\mathsf{A}$ when $v_1, \dots, v_\ell$ (and $G_N^\mathsf{A}$) are presented. The crucial idea of the coupling is that we may assume that $\Rlex$ is presented the vertices $\pi_N(v_1),\ldots,\pi_N(v_\ell)$. More formally, we create a second graph $G$ that includes all edges in $G^\lex_N$ and an additional edge $e$, which is the edge that $\Rlex$ would choose when presented the images of the $v_i$'s under $\pi_N$. That is, $e = \{\pi_N(v_i), \pi_N(v_{i+1})\}$ and $i = \Rlex(c(\pi_N(v_1)), \dots, c(\pi_N(v_\ell)))$, where $c(u)$ denotes the number of vertices in the component that contains $u$ in $G_N^\lex$. Since the $v_i$'s are uniformly random and $\pi_N$ is a permutation of the vertices  we infer that $G$ is distributed like $G_{N+1}^\lex$, and so this construction is indeed a coupling for $G_N^{\lex}$ and $G_N^{\mathsf{A}}$.

It remains to show the existence of a permutation $\pi_{N+1}$ such that $I_{N+1}^\lex \subseteq \pi_{N+1}(I_{N+1}^\mathsf{A})$. Note that it suffices to show that $|I_{N+1}^\lex| \le |I_{N+1}^\mathsf{A}|$. However, this is a consequence of the fact that $\Rlex$ favours $1$-components. For example, suppose that $\mathsf{A}$ selects the edge $\{u,v\}$ such that $u,v \in I_N^\mathsf{A}$ and moreover, $\pi_N(u), \pi_N(v) \in I_N^{\lex}$. Then both $|I_N^\lex|$ and $|I_N^\mathsf{A}|$ decrease by two (albeit $\Rlex$ might select a different edge joining two isolated vertices in $G^\lex_N$), and the induction hypothesis implies $|I_{N+1}^\lex| \le |I_{N+1}^\mathsf{A}|$. More generally, in the case $u,v \in I_N^\mathsf{A}$ set $s = |\{x \in \{u,v\}: \pi_N(x) \not\in I_N^{\lex}\}| \in \{0,1,2\}$; we just handled the case $s = 0$. Since $I_N^\lex \subseteq \pi_N(I_N^\mathsf{A})$ this definition imples $|I_N^\lex| \le |I_N^\mathsf{A}| - s$. Moreover, $|I_N^\mathsf{A}|$ will decrease by $2$, and $|I_N^\lex|$ will decrease by at least $2-s$. Again the hypothesis guarantees $|I_{N+1}^\lex| \le |I_{N+1}^\mathsf{A}|$. The other cases (i.e., when $u \not\in I_N^\mathsf{A}$ or $v \not\in I_N^\mathsf{A}$) follow by completely analogous arguments, so we leave them as an easy exercise to the reader.

To make use of~\eqref{eq:domination}, recall that Theorem~\ref{thm:main} (b) implies that for every $c\in \RR$,
\begin{align*}
\lim_{n\to\infty}\Pr\left[Y_{1}^\text{lex}\left(\left\lfloor\frac{n\log n
+cn}{\ell}\right\rfloor\right) =0\right] =  e^{-d_1e^{-c}}.
\end{align*}
Thus, by~\eqref{eq:domination} we also have 
\begin{equation*}
\label{eq:generalrules}
\begin{split}
\limsup_{n\to \infty} \Pr\left[\Tcon^\mathsf{A} \leq \frac{n\log n
+cn}{\ell}\right] & \leq
\limsup_{n\to\infty}\Pr\left[Y_{1}^\mathsf{A}\left(\left\lfloor\frac{n\log n
+cn}{\ell}\right\rfloor\right) =0\right]\\ & \leq
\lim_{n\to\infty}\Pr\left[Y_{1}^\text{lex}\left(\left\lfloor\frac{n\log n
+cn}{\ell}\right\rfloor\right) =0\right],
\end{split}
\end{equation*}
and a similar statement follows for the expectation. In this sense, among all
$\ell$-Achlioptas processes, $\Rlex$ is the fastest to connect a graph.

\bibliographystyle{plain}
\bibliography{achlioptas}

\end{document}